\newcommand{\bT}{{\mathbb T}}
\newcommand{\bZ}{{\mathbb Z}}
\newcommand{\bF}{{\mathbb F}}
\newcommand{\bC}{{\mathbb C}}
\newcommand{\bG}{{\mathbb G}}
\newtheorem{thm}{Theorem}[section]
\newtheorem{lemma}[thm]{Lemma}
\newtheorem{cor}[thm]{Corollary}
\newtheorem{prop}[thm]{Proposition}
\numberwithin{equation}{section}
\newtheorem{conj}[thm]{Conjecture}
\newcommand{\II}{{I_{\infty}^2}}
\title[   $BG$ and $G/B$]{Notes on Chow rings of 
 $G/B$ and $BG$}
\author[N.Yagita]{Nobuaki Yagita}
\begin{document}

\address{[N. Yagita]
Department of Mathematics, Faculty of Education, Ibaraki University,
Mito, Ibaraki, Japan}
 
\email{nobuaki.yagita.math@vc.ibaraki.ac.jp}
\keywords{Chow rings, flag varieties,  classifying spaces}
\subjclass[2010]{Primary 55P35, 57T25, 20C20 ; Seconary 
 55R35, 57T05}

\maketitle

\begin{abstract}
Let $G$ be a compact Lie group and $T$ its maximal torus.  The composition of maps
$ H^*(BG)\to H^*(BT) \to H^*(G/T)$
is zero for positive degree, while it is far from exact.
We change $H^*(G/T)$ by Chow ring  $CH^*(X)$ for
$X$ some twisted form of $G/T$, and change
$H^*(BG)$ by $CH^*(BG)$.  Then we see that it becomes
near to exact but still not exact, in general.  We also 
see that
the difference for exactness relates
 to the generalized Rost motive in $X$.
\end{abstract}

\section{Introduction}

Let $p$ be a prime number. 
Let $G$ and  $T$ be a  connected compact Lie group
and its maximal torus.
Given a field $k$ with $ch(k)=0$,
let $G_k$ and $T_k$ be a split  reductive group and 
a split maximal torus over the field $k$,
corresponding to $G$ and  $T$.  Let $B_k$ be the Borel
subgroup containing $T_k$.
Let us write by $BG_k$ its classifying space of $G_k$
defined by Totaro \cite{To1}.

For a smooth algebraic variety $X$ over $k$   (resp. toplogical space),  let
$CH^*(X)=CH^*(X)_{(p)}$ (resp. $H^*(X)=H^*(X)_{(p)}$) mean $p$-localized Chow ring over $k$  (resp. $p$-localized ordinaly cohomology ring).  In general, to compute $CH^*(BG_k)$
or $H^*(BG)$ are difficult problems.  At first,  we consider
them modulo torsion elements. 
We consider the following diagram.
\[ \begin{CD}
(1.1)\qquad   CH^*(BG_k)/Tor  \qquad    @>{(1)}>{i^*}> CH^*(BB_k)^W \\
    @V{(2)}V{cl}V \quad             @V{\cong}VV  \\
   H^*(BG)/Tor  @>{(3)}>{i^*}> H^*(BT)^W \end{CD} \]
where $Tor$ is the ideal generated by torsion elements, $cl$ is the cycle map,  and $W=N_G(T)/T$
is the Weyl group.  

When $H^*(G)$ is torsion free,
we know that $Tor=0$ and all maps $(1),(2),(3)$ are isomorphic.  
Hence we only consider cases that $H^*(G)$ have 
$p$-torsion throughout this paper. 
By the existence of the Becker-Gottlieb transfer,
the maps $(1),(3)$ are injections.  Moreover when $G$ is simply connected,  $(1)$ is always not surjective
(\cite{YaW}), while for many cases $(3)$ are  surjective.
(For cases that $(3)$ are not surjective are founded
by Feshbach \cite{Fe}, Benson-Wood \cite{Be-Wo}). In any way,
$CH^*(BG_k)/Tor$ is isomorphic to a proper subring of
$CH^*(BB_k)^W$
for simply connected  $G$.

To study $CH^*(BG_k)/Tor$, we consider twisted
flag varieties.
Let $\bG$ be a $G_k$-torsor. 
 Then $\bF=\bG/B_k$ is a
(twisted) form of the flag variety $G_k/B_k$. 
The fibering $G/T\stackrel{j}{\to} BT 
\stackrel{i}{\to} BG$ induces
the maps 
\[ (1.2)\quad  CH^*(BG_k)\stackrel{i^*}{\to} CH^*(BB_k) 
\stackrel{j^*}{\to}  CH^*(\bF), \]
whose composition $j^*i^*=0$  for $*>0$.  But it 
 is far from exact  when $\bG\cong G_k$ the split group.
  Here exact means
$Ker(j^+)=Ideal(Im(i^+))\subset CH^+(BB_k)$.  
However, we observe that 
it becomes near exact when $\bG$ is sufficient
twisted, while it is still not exact for most cases.
To see this fact, we define the difference as
\[(1.3)\quad D_{CH}(\bG)=Ker(j^+(\bG))/(Ideal(Im(i^+)),\]
and will see that $D_{CH}(\bG))$ is quite small.
for versal  flag variety $CH^*(\bF)$.
(For the definition of $versal$ $torsor$ see 
[Ga-Me-Se], [ Me-Ne-Za], [Kar], [To2].)
In particular, when $G$ is of type $(I)$,
a non-trivial  $G_k$-torsor is always versal.

By Petrov-Semenov-Zainoulline ([Pe-Se-Za], [Se-Zh]), it is known that the $p$-localized motive
$M(\bF)_{(p)}$ of $\bF$ is decomposed as
\[ (1.4)\quad M(\bF)_{(p)}=M(\bG/B_k)_{(p)}\cong  R(\bG)\otimes(\oplus_i  \bT^{\otimes s_i})\]
where $\bT$ is the (reduced) Tate motive and $R(\bG)$ is  some
motive called generalized Rost motive. (It is the original 
Rost motive ([Ro], [Vo1,2], [Pe-Se-Za], \cite{YaC}) when $G$ is of type $(I)$ as explained below).
Hence we have maps
\[ (1.5)\quad CH^*(BB_k)\stackrel{j^*}{\to} CH^*(\bF)
\stackrel{pr.}{\to} 
CH^*(R(\bG)).\]
From Merkurjev and Karpenko [Me-Ne-Za], [Kar], we know
that the first map $j^*$ is also surjective when $\bG$ is a versal
$G_k$-torsor. 

For ease of computations, we mainly consider the $mod(p)$
theories  for (1.2)
\[  (1.6)\quad CH^*(BG_k)/p\stackrel{i_p^*}{\to} CH^*(BB_k)/p 
\stackrel{j_p^*}{\to}  CH^*(\bF)/p.\]
Let us define $D(\bG)=Ker(j_p^+)/(Ideal(Im(i_p^+)).$ 
 Then we see
\begin{lemma}
Let $\bG$ be versal.
 Then we have the surjection
\[  pr: D(G_k)/D(\bG) \to CH^+(R(\bG))/p.\]
\end{lemma}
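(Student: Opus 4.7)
The plan is to identify $D(G_k)/D(\bG)$ with a concrete kernel inside $CH^+(\bF)/p$ and then transfer the question to a statement about the motive $R(\bG)$ via the projection $pr$. First I would exploit base change to the algebraic closure. Since $\bG$ becomes split over $\bar k$, we have $\bF_{\bar k}\cong G_{\bar k}/B_{\bar k}$, a cellular variety, so $CH^*(\bF_{\bar k})/p\cong CH^*(G_k/B_k)/p$. The pullback $res\colon CH^*(\bF)/p\to CH^*(\bF_{\bar k})/p$ along base change satisfies $res\circ j_p^*(\bG)=j_p^*(G_k)$ by naturality of $j^*$. This immediately gives $Ker(j_p^+(\bG))\subseteq Ker(j_p^+(G_k))$ and hence $D(\bG)\subseteq D(G_k)$, so that the quotient in the lemma makes sense. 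Combined with the surjectivity of $j_p^*(\bG)$ for versal $\bG$ (Merkurjev and Karpenko), we obtain the identification
\[ D(G_k)/D(\bG)\;\cong\;Ker(j_p^+(G_k))/Ker(j_p^+(\bG))\;\cong\;Ker\bigl(res\colon CH^+(\bF)/p\to CH^+(\bF_{\bar k})/p\bigr). \]

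Next I would define the candidate surjection as $[\alpha]\mapsto pr(j_p^*(\bG)(\alpha))$, where $pr$ is the projection from (1.5). Well-definedness on the quotient is formal: elements of $Ideal(Im(i_p^+))$ die under $j_p^*(\bG)$ because $j^*i^*=0$, and elements of $Ker(j_p^+(\bG))$ die by definition.

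For surjectivity I would use the motivic decomposition (1.4). Writing $M(\bF)_{(p)}\cong R(\bG)\otimes\bigoplus_i\bT^{\otimes s_i}$ with $s_0=0$ distinguished, we get $CH^*(\bF)/p\cong\bigoplus_i CH^{*-s_i}(R(\bG))/p$, and similarly for $\bF_{\bar k}$ with $R(\bG)_{\bar k}$ in place of $R(\bG)$. The restriction $res$ respects this decomposition and acts as $\bigoplus_i res_R[-s_i]$, where $res_R\colon CH^*(R(\bG))/p\to CH^*(R(\bG)_{\bar k})/p$ is the analogous Rost-motive restriction. Hence $Ker(res)\cong\bigoplus_i Ker(res_R)[-s_i]$, and projecting via $pr$ onto the $s_0=0$ summand yields $pr(Ker(res))=Ker(res_R)$. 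Therefore the image of our map equals $Ker(res_R)\subseteq CH^+(R(\bG))/p$, and surjectivity reduces to showing that $res_R$ vanishes in positive degrees.

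The main obstacle is thus to verify that $res_R\colon CH^+(R(\bG))/p\to CH^+(R(\bG)_{\bar k})/p$ is zero. This is the characteristic non-splitness of the generalized Rost motive: the positive-degree generators of $CH^*(R(\bG))/p$ come from cycles that are only $p$-divisibly rational, and hence vanish mod $p$ after restriction to $\bar k$, in analogy with the classical calculation for Severi--Brauer varieties. For $G$ of type $(I)$, where $R(\bG)$ is the basic Rost motive, this is part of the Rost--Voevodsky structure theorem; the general case should follow from the Petrov--Semenov--Zainoulline framework invoked in (1.4). Once this vanishing is in hand, the desired surjectivity is immediate.
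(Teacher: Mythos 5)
Your formal skeleton is sound, and its first half is essentially the paper's own first step in different clothing: using the Merkurjev--Karpenko surjectivity of $j^*(\bG)$ for versal $\bG$, you identify $D(G_k)/D(\bG)$ with $Ker(j^+(G_k))/Ker(j^+(\bG))$ sitting inside $CH^+(\bF)/p$ (equivalently, with the kernel of restriction to $\bar k$), and the compatibility of the decomposition (1.4) with base change correctly reduces the lemma to a single claim: that the restriction $CH^+(R(\bG))/p\to CH^+(\bar R(\bG))/p$ is zero.

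The gap is that this last claim, which is the entire non-formal content of the lemma, is asserted rather than proved. For $G$ of type $(I)$ it is indeed part of the known structure of the original Rost motive, but the lemma is stated for arbitrary $G$ with $p$-torsion, where $R(\bG)$ is the generalized Rost motive; ``should follow from the Petrov--Semenov--Zainoulline framework'' is not an argument, since [Pe-Se-Za] supplies the decomposition (1.4) but not by itself the rationality statement you need. The input that closes the gap is exactly what Section 2 records (citing \cite{YaC}): for versal $\bG$ the composite $A(b)\to CH^*(\bF)/p\to CH^*(R(\bG))/p$ is surjective, i.e.\ $CH^+(R(\bG))/p$ is generated by images of products of the transgression elements $b_i$; since each $b_i$ lies in $Ker(j^*(G_k))$ mod $p$ (because $CH^*(G_k/B_k)/p\cong P(y)\otimes S(t)/(b)$, cf.\ Lemma 2.1 where $b_i\equiv py_i$ plus higher terms), these generators die under restriction, which is precisely your vanishing claim. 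This is in fact how the paper argues: it writes $Ker(j^+(G_k))=(b)\cong A(b)^+\otimes S(t)/(b)$ via Lemma 3.2 and applies $pr$, hitting all of $CH^+(R(\bG))/p$ by the \cite{YaC} generation result. So your route is salvageable, but only by invoking that same result; as written, the crucial surjectivity is assumed rather than established.
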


We will see that  $D(\bG)$ are quite small
in some cases.  For example we have 
\begin{thm}
Let $(G,p)=(SO(2\ell+1), 2)$ and $\bG$ be versal.
Then $ D(\bG)\cong 0$, that is the above  sequence 
(1.6) is exact.
\end{thm}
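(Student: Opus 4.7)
My plan is to invoke Lemma 1.1 to reduce the theorem to a dimension matching. Since Lemma 1.1 supplies a surjection
\[ pr\colon D(G_k)/D(\bG)\twoheadrightarrow CH^+(R(\bG))/2, \]
showing $D(\bG)=0$ is equivalent to showing that $pr$ from $D(G_k)$ itself is already an isomorphism, and for this it suffices to prove $\dim_{\bF_2}D(G_k)=\dim_{\bF_2}CH^+(R(\bG))/2$ in each degree.

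For the split side, I would recall that $H^*(BG;\bF_2)=\bF_2[w_2,w_3,\ldots,w_{2\ell+1}]$, and that the generators of $CH^*(BG_k)/2$, after cycle map and restriction to $CH^*(BT_k)^W/2\subset\bF_2[t_1,\ldots,t_\ell]^W$, correspond to the even Stiefel--Whitney classes and the squares $w_{2i+1}^2$ of the odd ones. By Borel's theorem $Ker(j_2^+)$ is the positive-degree $W$-invariant ideal in $CH^*(BT_k)/2$, so $D(G_k)=Ker(j_2^+)/Ideal(Im(i_2^+))$ is spanned in each degree by the $W$-invariants corresponding to the $\ell$ odd Stiefel--Whitney generators $w_3,w_5,\ldots,w_{2\ell+1}$.

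For the Rost-motive side, $(SO(2\ell+1),2)$ is of type $(I)$, and for versal $\bG$ the generalized Rost motive $R(\bG)$ is the classical Rost motive attached to the versal Pfister form underlying $\bG$. Its mod-$2$ Chow ring, by Rost and Voevodsky, is a truncated $\bF_2$-algebra with exactly $\ell$ non-trivial positive-degree generators (in the motivic dimensions dictated by the Pfister symbol), matching the count in the previous paragraph.

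Equality of dimensions then forces $pr$ to be an isomorphism, giving $D(\bG)=0$. The main obstacle is not the numerical count but verifying the explicit matching: one must show that each $W$-invariant class in $D(G_k)$ coming from an odd Stiefel--Whitney class maps non-trivially to $CH^+(R(\bG))/2$. Here the Karpenko--Merkurjev surjectivity of $j^*$ for versal $\bG$, combined with the explicit Petrov--Semenov--Zainoulline decomposition $M(\bF)_{(2)}\cong R(\bG)\otimes(\oplus_i\bT^{\otimes s_i})$, is essential: the Tate summands account precisely for the invariants already in $Ideal(Im(i_2^+))$, leaving the Rost summand to detect exactly the odd Stiefel--Whitney contribution.
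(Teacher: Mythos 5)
Your reduction in the first paragraph does not work, and the numerical claim behind it is false. The surjection of Lemma 1.1 goes from $D(G_k)/D(\bG)$ onto $CH^+(R(\bG))/2$, and this map is very far from injective: by the paper's Lemma 6.2 one has $D(G_k)\cong \Lambda(c_1,\dots,c_\ell)^+\otimes S(t)/(c_1,\dots,c_\ell)$, while $CH^+(R(\bG))/2\cong \Lambda(c_1,\dots,c_\ell)^+$ (Theorem 6.3), so already for $\ell=2$ the total dimensions are $6$ versus $3$ and the degreewise equality $\dim_{\bF_2}D(G_k)=\dim_{\bF_2}CH^+(R(\bG))/2$ fails. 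In fact $D(\bG)=0$ holds here even though $pr$ restricted to $D(G_k)$ is \emph{not} an isomorphism (its kernel is $CH^+(R(\bG))/2\otimes S(t)^+/(c_1,\dots,c_\ell)$, cf. Corollary 3.4), so ``$D(\bG)=0$ iff $pr$ is an isomorphism'' is not a correct equivalence. Your description of the split side is also off: $D(G_k)$ is an ideal quotient $(c_1,\dots,c_\ell)/(c_1^2,\dots,c_\ell^2)$, not a space spanned by $\ell$ invariant classes attached to $w_3,\dots,w_{2\ell+1}$; and $R(\bG)$ is \emph{not} the classical Rost motive of a Pfister form ($SO(2\ell+1)$ is not of type $(I)$) -- its Chow ring mod $2$ is the full exterior algebra $\Lambda(c_1,\dots,c_\ell)$ of rank $2^\ell$, computed by Petrov and in [Ya4], not by the Rost--Voevodsky two-cell computation.

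What the proof actually requires, and what your proposal never establishes, is an identification of the two ideals in $S(t)/2=CH^*(BB_k)/2$ for the \emph{versal} torsor. On one side, Petrov's theorem gives $CH^*(\bF)/2\cong S(t)/(2,c_1^2,\dots,c_\ell^2)$, hence $Ker(j_2^+(\bG))=(c_1^2,\dots,c_\ell^2)$; note this is an input about the twisted flag variety itself, not just about the Rost summand, and it is nowhere used in your argument. On the other side, $CH^*(BSO(2\ell+1)_k)\cong \bZ_{(2)}[c_2,\dots,c_{2\ell+1}]/(2c_{odd})$, and under $i^*$ the even Chern classes go to the Pontryagin classes, $c_{2i}\mapsto p_i\equiv c_i^2 \bmod 2$, while the odd ones die mod $2$; hence $Ideal(Im(i_2^+))=(c_1^2,\dots,c_\ell^2)$ as well. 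The equality of these two ideals is the whole proof that $D(\bG)=0$; a dimension count against $CH^+(R(\bG))/2$ cannot substitute for it.
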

\begin{thm}  Let $(G(N),p)=(Spin(N),2)$ and 
$\bG(N)$ be versal.  Then we have
$  lim_{\infty \gets N}D(\bG(N))=0. $
\end{thm}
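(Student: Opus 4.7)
The plan is to exploit Lemma~1.1 together with the explicit structure of $CH^*(BSpin(N)_k)/2$ and of the generalized Rost motive of versal $Spin(N)$-torsors at $p=2$, and then to pass to the inverse limit. The standard embeddings $Spin(N)\hookrightarrow Spin(N+1)$, with a compatible choice of versal torsors $\bG(N)$, assemble $\{D(\bG(N))\}_N$ into an inverse system, and Lemma~1.1 provides for each $N$ a surjection
\[ pr_N \colon D(Spin(N)_k)/D(\bG(N)) \twoheadrightarrow CH^+(R(\bG(N)))/2. \]
So $lim_{\infty\gets N}D(\bG(N))=0$ will follow once these surjections become isomorphisms in the inverse limit: each class of $D(\bG(N))$ records a discrepancy between the split defect and the Rost motive that must be shown to vanish at some larger level $N'$.

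Next, I would feed in the explicit calculations on each side. The Chow ring $CH^*(BSpin(N)_k)/2$ and the image of $i_2^*$ in $CH^*(BB_k)^W/2$ are known from the work of Quillen, Kono--Mimura, Guillot and Yagita, so $D(Spin(N)_k)$ is described, in each fixed degree, by an explicit finite list of Weyl-invariant classes tied to the higher Stiefel--Whitney classes $w_{2^i}$. On the Rost side, the motive $R(\bG(N))$ of a versal $Spin(N)$-torsor at $p=2$ has been determined by Vishik and Petrov--Semenov--Zainoulline via a Pfister-like invariant, and $CH^+(R(\bG(N)))/2$ has generators in the same degrees $2^i-1$ up to a rank that grows with the $2$-adic valuation of~$N$.

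The heart of the argument is the matching step: for each indecomposable class $\alpha$ of $D(Spin(N)_k)$ I would produce, for some $N'\geq N$, a Rost-motive generator that detects the image of $\alpha$ under $pr_{N'}$. Since $D(Spin(N)_k)$ is finite-dimensional in each fixed degree, while the Rost motive acquires new generators in unboundedly high degree as $N$ grows, cofinality of such $N'$ over all generators forces $lim_{\infty\gets N}D(\bG(N))=0$.

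The main obstacle is precisely this matching: one must verify that no indecomposable class of the split defect escapes detection by the Rost motive in the limit. This will demand simultaneous control of both sides and a careful combinatorial argument using the parallel structure of the Stiefel--Whitney generators on the left and the Pfister-style Rost-motive generators on the right; in particular, one has to check that the restriction maps in the inverse system interact with the explicit generating sets in a way that kills every compatible sequence.
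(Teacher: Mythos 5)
Your reduction of the theorem is not valid. You claim that $\lim_{\infty\gets N}D(\bG(N))=0$ will follow once the surjections $pr_N\colon D(Spin(N)_k)/D(\bG(N))\to CH^+(R(\bG(N)))/2$ of Lemma 1.1 become isomorphisms in the limit. But $pr$ is essentially never injective: by Corollary 3.4, whenever $CH^*(\bF)/2\cong S(t)/(2,f_1(b),\dots,f_s(b))$ one has $D(G_k)/D(\bG)\cong CH^+(R(\bG))/2\otimes S(t)/(b)$, so $pr$ has the large kernel $CH^+(R(\bG))/2\otimes S(t)^+/(b)$ even in cases where $D(\bG)=0$ (for instance $SO(2\ell+1)$, Theorem 1.2). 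Conversely, information about the quotient $D(G(N)_k)/D(\bG(N))$ can only be converted into information about $D(\bG(N))$ if one independently controls $D(G(N)_k)$, and your ``matching/detection'' step points in the wrong direction anyway: detecting a class by a generator of the Rost motive shows that it survives in $CH^*(\bF)/2$, i.e.\ that it is \emph{not} in $Ker(j^+)$, whereas what must be proved is the inclusion $Ker(j^+)\subset Ideal(Im(i^+))$ in the limit.

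The two ingredients that actually carry the proof are absent from your plan. First, one must identify the limit of the kernel: the paper shows (Lemma 8.4, via detection of products $c_{i_1}\cdots c_{i_s}$ in connective Morava K-theory $k(n)^*(\bar R(\bG))$ and a restriction argument) that for every $n$ there is $N$ with $CH^*(R(\bG(N)))/2\cong \Lambda(c_2,\dots,c_n)$ in degrees $\le n$, hence $\lim_{\infty\gets N}CH^*(\bF)/2\cong S(t)/(2,c_2^2,c_3^2,\dots)$ (Corollary 8.5) and $Ker(j^+)$ is exactly the ideal $(c_2^2,c_3^2,\dots)$. Second, one must exhibit enough classes in $CH^*(BSpin(N)_k)$ restricting onto that ideal. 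Here your appeal to ``the explicit structure of $CH^*(BSpin(N)_k)/2$'' overclaims what is known: this ring is not computed in general (the paper notes even $CH^*(BSpin(9)_k)$ is unknown); what is known, and suffices, is the Chern subring $RQ(N)$ of Theorem 7.8, with $i^*(RQ)\cong D_\ell$ (Lemma 7.9) and $i^*(c_{2i})=c_i^2$, $i^*(c_{2^h}(\Delta_{\bC}))=e_{2^{t+1}}^{2^{h-t-1}}$ (Lemma 8.2), so that $Ideal(Im(i^+))\supset(c_2^2,c_3^2,\dots)$ in the limit and $D=0$ follows. Finally, your description of the Rost side (generators in degrees $2^i-1$, rank governed by the $2$-adic valuation of $N$, ``Pfister-like'') does not match the actual stable answer, where the generators are the classes $c_i$ themselves in every degree $\ge 2$; the versal $Spin(N)$ motive for large $N$ is a generalized Rost motive, not the type $(I)$ one.
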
\begin{prop}  Let $G=Spin(7)$ and $\bG$ be versal.
Then  we have additively
\[ D(\bG)\cong \Lambda(c_2c_3,e_4)^+\otimes
S(t,c)\quad for\ S(t,c)=S(t)/(c_2,c_3,e_4)\]
where $c_i$ is the $i$-th elementary symmetric function  in $S(t)=CH^*(BB_k)$ and $e=c_1^4$,
and $\Lambda(a,...,b)$ is the $\bZ/2$-exterior algebra generated by $a,...,b$ 
\end{prop}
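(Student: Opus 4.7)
The plan is to compute the two inputs, $\Ker(j_2^+)$ and $\mathrm{Ideal}(\Img(i_2^+))$, as ideals inside $S(t)/2 = CH^*(BB_k)/2$, and then form their quotient $D(\bG)$ by direct linear algebra.

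First I would describe $\Img(i_2^+) \subset S(t)/2$. For $G = Spin(7)$ at $p = 2$, I use a presentation of $CH^*(BSpin(7)_k)/2$ modulo torsion: the image under $i_2^*$ is generated by the reductions of the Pontryagin classes $p_2, p_3$ of the $7$-dimensional standard representation (with $p_1 \equiv w_2^2 \equiv 0 \pmod{2}$ on $BSpin$, so $c_1^2$ does not appear) together with a Chern class $e_4$ coming from the $8$-dimensional complex spin representation $\Delta$.  Under restriction to $CH^*(BB_k)/2$ these yield $c_2^2$, $c_3^2$, and $e_4 = c_1^4$, which determines $\mathrm{Ideal}(\Img(i_2^+))$ as the ideal generated by these elements.

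Next I would compute $\Ker(j_2^+)$ using the motivic decomposition (1.4) together with Karpenko's surjectivity $j_2^* \colon S(t)/2 \thrarr CH^*(\bF)/2$.  For a versal $Spin(7)$-torsor at $p = 2$, the generalized Rost motive $R(\bG)$ is the Rost motive of a generic $3$-fold Pfister form (equivalently, of the Arason invariant in $H^3$), whose $\bmod\, 2$ Chow group has an explicit small Poincar\'e polynomial.  Combined with the Tate-motive summands $\bT^{\otimes s_i}$ in (1.4), this pins down the Poincar\'e polynomial of $CH^*(\bF)/2$ and hence that of $\Ker(j_2^+)$; matching generators I would identify $\Ker(j_2^+) = (c_2, c_3, e_4)$ inside $S(t)/2$.

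Finally I would form $D(\bG) = \Ker(j_2^+) / \mathrm{Ideal}(\Img(i_2^+))$.  In this quotient $e_4 \equiv 0$ and $c_2^2 = c_3^2 = 0$; the residual $S(t)/2$-action factors through $S(t, c) = S(t)/(c_2, c_3, e_4)$.  A Koszul-type analysis, using that $c_2, c_3, c_1$ are pairwise coprime in the polynomial ring $S(t)/2$ and that $e_4 \cdot (c_2, c_3) \subset \mathrm{Ideal}(\Img(i_2^+))$, reorganizes the quotient into three sheets indexed by $c_2 c_3$, $e_4$, and $c_2 c_3 \cdot e_4$, yielding $D(\bG) \cong \Lambda(c_2 c_3, e_4)^+ \otimes S(t, c)$.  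The central technical obstacle will be Step 2: pinning down $\Ker(j_2^+)$ precisely requires careful Poincar\'e-series bookkeeping for the Rost motive $R(\bG)$ of $Spin(7)$ at $p = 2$, and even a small discrepancy in the low-degree generators would cascade into a different answer for $D(\bG)$.
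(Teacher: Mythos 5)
Your overall plan---identify $\Ker(j_2^+)$ and the ideal generated by $\Img(i_2^+)$ inside $S(t)/2$ and take the quotient---is the same as the paper's, but both of your key identifications are wrong, and the first is the one you yourself single out as the crux. For a versal torsor the kernel is not $(c_2,c_3,e_4)$: that is the kernel for the split form $G_k/B_k$. Since $Spin(7)$ is of type $(I)$, the type $(I)$ theorem at the end of Section 2 (this is exactly what Section 9 invokes, with $b_1=c_2$, $b_2=c_3$, $b_3=e_4$) gives $CH^*(\bF)/2\cong S(t)/(2,\,c_2^2,\,c_2c_3,\,c_3^2,\,e_4)$, i.e. $\Ker(j_2^+)=(c_2^2,c_2c_3,c_3^2,e_4)$; the classes $c_2,c_3$ survive in $CH^*(\bF)/2$ because they span $CH^+(R(\bG))/2\cong\bZ/2\{c_2,c_3\}$. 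Your own inputs already rule out your answer: combining Karpenko's surjectivity with the decomposition (1.4), $CH^*(\bF)/2$ is additively $CH^*(R(\bG))/2\otimes S(t)/(c_2,c_3,e_4)$, which is strictly larger than $S(t)/(2,c_2,c_3,e_4)$, so the kernel cannot contain $c_2$ or $c_3$.

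The image is also wrong: $e_4$ itself is not in $\Img(i_2^+)$. The Chern classes of the complex spin representation restrict to $2c_2$, $2e_4$, $2c_2e_4$ and $e_4^2$ (Section 9), so the mod $2$ image is $\bZ/2[c_2^2,c_3^2,e_4^2]$ and the image ideal is $(c_2^2,c_3^2,e_4^2)$; the class $e=w_8(\Delta)$ is not algebraic mod $2$ because $Q_2e=d_0e\neq 0$ (Lemma 6.6) while the Milnor operations vanish on mod $2$ Chow classes. The paper gets this from Guillot's computation of $CH^*(BSpin(7))$ (Theorem 9.1) together with these explicit restrictions, not from Pontryagin classes alone. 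With your ideal $(c_2^2,c_3^2,e_4)$ one would have $e_4\equiv 0$ in $D(\bG)$, which kills the $e_4$- and $c_2c_3e_4$-sheets of the asserted answer; indeed your sketch is internally inconsistent, since you write that $e_4\equiv 0$ in the quotient and then keep sheets indexed by $e_4$ and $c_2c_3e_4$. Finally, even with the correct kernel and image the last step is not mere bookkeeping: the quotient $(c_2^2,c_2c_3,c_3^2,e_4)/(c_2^2,c_3^2,e_4^2)$ also contains classes such as $c_2e_4$ and $c_3e_4$ that are not visibly of the form $\Lambda(c_2c_3,e_4)^+\otimes S(t,c)$, so the ``Koszul-type reorganization'' you appeal to needs an actual argument (this is where the free-module decomposition $S(t)/2\cong A(b)\otimes S(t)/(b)$ of Section 2--3 is doing the work in the paper).
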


The plan of this paper is the following.
In $\S 2$, we recall the  Chow ring $CH^*(\bF)$
for a nontrivial $G_k$-torsor $\bG$.  In $\S 3$
we note some elementary relations between
$CH^*(\bF)$ and  $CH^*(BG_k)$. In $\S 4$
we note some facts for $CH^*(B_k)^W/Tor$.
In $\S 5, \S 6$, we try to compute
$D(\bG)$ for $G=PU(p), SO(n)$.
In $\S7, \S 8$, we try to study $D(\bG)$ for
$G=Spin(n) $ for general $n$.  In  $\S 9,\S 10$, we study
$Spin(7),Spin(9)$.  In $\S 11, \S12$ we study the case
$(G,p)=(F_4,3)$. In $\S 13$, we study the case
$G=E_6,E_7$ and $p=3$.

The author thanks Akihiko Hida and Masaki Kameko
for suggestions for this paper.  In particular Masaki Kameko found errors  in the first version of this paper.

\section{$CH^*(\bG/B_k)$}

We recall arguments for
$H^*(G/T)$ in algebraic topology. 
By Borel, its $mod(p)$ cohomology is (for $p$ odd)
\[ H^*(G;\bZ/p)\cong P(y)/p\otimes \Lambda(x_1,...,x_{\ell}),
\quad |x_i|=odd\]
where  $P(y)$ is a truncated polynomial ring 
generated by $even$ dimensional elements $y_i$,
and $\Lambda(x_1,...,x_{\ell})$ is the $\bZ/p$-exterior algebra generated by $x_1,...,x_{\ell}$.  When $p=2$, we consider the  graded ring $grH^*(G;\bZ/2)$ which is isomorphic to the right  hand side ring above.

When $G$ is simply connected and $P(y)$ is generated by just one generator, 
we say that $G$ is of type $(I)$.  Except for 
$(E_7,p=2)$ and $(E_8,p=2,3)$, all exceptional (simple) Lie groups are of type $(I)$.  The spin groups $G=Spin(n)$ are
of type $(I)$ for $7\le n\le 10$.
Note that in these cases, it is known
$rank(G)=\ell\ge 2p-2$.

We consider the fibering ([Tod2], [Mi-Ni])
$G\stackrel{\pi}{\to}G/T\stackrel{i}{\to}BT$
and the induced spectral sequence 
\[(2.1)\quad  E_2^{*,*}=H^*(BT;H^*(G;\bZ/p)) \Longrightarrow H^*(G/T;\bZ/p).\] 
Here we  can write 
$H^*(BT)\cong S(t)=\bZ[t_1,...,t_{\ell}]$
with $|t_i|=2.$

It is well
known that $y_i\in P(y)$ are permanent cycles and 
that there is a regular sequence 
$(\bar b_1,...,\bar b_{\ell})$ in $H^*(BT)/(p)$ such that $d_{|x_i|+1}(x_i)=\bar b_i$ ([Tod2], [Mi-Ni]).

We know that $G/T$ is a manifold such that
$H^*(G/T)$ is torsion free and is generated by even degree elements.
We also see that 
there is a filtration in $H^*(G/T)_{(p)}$ such that  
\[ grH^*(G/T)_{(p)}\cong P(y)\otimes S(t)/(b_1,...,b_{\ell})\]
where $b_i\in S(t)$ with $ b_i=\bar  b_i\ mod(p)$.

Recall $BP^*(-)$ be the Brown-Peterson theory with
the coefficient $BP^*=\bZ_{(p)}[v_1,...]$,
$|v_i|=-2(p^i-1)$.  
Then we have 
\[ grBP^*(G/T)\cong BP^*\otimes grH^*(G/T).\] 
Let
$Q_i:H^*(X;\bZ/p)\to H^{*+2p^n-1}(X;\bZ/p)$ be the Milnor operation.  There is a relation between
$Q_i$-action on $H^*(X;\bZ/p)$ and $v_i$-action
on $BP^*(X)$.
\begin{lemma}  
Let $d(x)=b$ in the above spectral sequence (2.1).
Then we can take (a lift of $b$)  $b\in BP^*(G/T)$ such that 
\[ b=\sum_{i=0} v_iy(i) \in \ BP^*(G/T)/\II
\quad with\ I_{\infty}=(p,v_1,...)\]
where $y(i)\in H^*(G/T;\bZ/p)$ with $\pi^*y(i)=Q_ix$.
\end{lemma}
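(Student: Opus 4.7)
The strategy is to transport the transgression $d(x)=\bar b$ from the mod $p$ Serre spectral sequence (2.1) into the analogous $BP$-theoretic Serre spectral sequence for $G\stackrel{\pi}{\to}G/T\stackrel{i}{\to}BT$, and then to read off the $v_i$-component of the resulting class modulo $\II$ via the standard relation between the Milnor operation $Q_i$ and multiplication by $v_i$ in $BP$-cohomology.

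The first step is to choose a $BP$-theoretic lift $\tilde x\in BP^*(G)/\II$ of $x\in H^*(G;\bZ/p)$. Such a lift is unobstructed modulo $\II$ because the AHSS obstructions to lifting from $H^*(G;\bZ/p)$ to $BP^*(G)$ lie in successively deeper $I_\infty$-filtration. The $BP$-theoretic transgression $\tilde b\in BP^*(BT)$ of $\tilde x$ reduces mod $I_\infty=(p,v_1,v_2,\ldots)$ to $\bar b$ by naturality of the spectral sequence with respect to the reduction $BP\to H\bZ/p$. Since $\bar b$ already vanishes in $H^*(G/T;\bZ/p)$, the pullback $b:=i^*(\tilde b)\in BP^*(G/T)$ lies in $I_\infty\cdot BP^*(G/T)$, so modulo $\II$ it admits a unique expansion
\[
b \equiv \sum_{i\ge 0} v_i z_i \pmod{\II},\qquad z_i\in H^*(G/T;\bZ/p),
\]
with the convention $v_0=p$.

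The second step is to identify each $z_i$ with $y(i)$. The key input is the Quillen--Landweber identity: in $BP^*(Y)/\II$ the operation ``multiplication by $v_i$'' on the $H\bZ/p$-part realizes the Milnor operation $Q_i$ (the connecting homomorphism in the short exact sequence coming from $v_i$-multiplication in the Landweber regular sequence). Applying this to a $BP$-theoretic lift of the transgression equation and using multiplicativity of the spectral sequence, each coefficient $z_i$ satisfies $\pi^*z_i = Q_i x$. Now $Q_i\bar b\in H^{\mathrm{odd}}(BT;\bZ/p)=0$ for degree reasons, so $Q_i x$ is automatically a permanent cycle in (2.1) and hence equals $\pi^*y(i)$ for a unique $y(i)\in H^*(G/T;\bZ/p)$; injectivity of $\pi^*$ on the image forces $z_i=y(i)$.

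The main obstacle is the second step: one must justify the $Q_i\leftrightarrow v_i$ correspondence in the required filtered-multiplicative form, track signs and normalizations, and reconcile the $I_\infty$-filtration of $BP^*$ with the Serre filtration on $BP^*(G/T)$ so that the expansion coefficients really are governed by the Milnor operations. Once this bookkeeping is in place, the formula $b\equiv\sum_{i\ge 0}v_i y(i)\pmod{\II}$ follows, with the $v_0=p$ term corresponding to $y(0)$ characterized by $\pi^*y(0)=Q_0 x$ (the integral Bockstein of $x$).
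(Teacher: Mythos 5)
Your first step is where the argument breaks down. The classes $x$ in question are exactly the odd-degree generators with $Q_ix\neq 0$, and such an $x$ is \emph{not} a permanent cycle in the Atiyah--Hirzebruch spectral sequence for $BP^*(G)$: it supports differentials $d_{2p^i-1}(x)=v_iQ_ix$ modulo higher filtration, and these obstructions lie in $I_\infty\setminus \II$, not in ``successively deeper $I_\infty$-filtration''. So the lift $\tilde x\in BP^*(G)/\II$ you start from does not exist in general, and the ``$BP$-theoretic transgression of $\tilde x$'' is undefined (you would need a Serre spectral sequence with $BP^*(G)$-coefficients in which $\tilde x$ lives and is transgressive; none of this is set up). Note that the obstruction terms $v_iQ_ix$ you are waving away are precisely the classes $v_iy(i)$ the lemma computes. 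The portion of your argument that is correct is elementary and does not need $\tilde x$ at all: any lift $b\in BP^*(BT)$ of $\bar b$ restricts to an element of $I_\infty\cdot BP^*(G/T)$, because $i^*\bar b=0$ and $BP^*(G/T)$ is a free $BP^*$-module with $BP^*(G/T)\otimes_{BP^*}\bZ/p\cong H^*(G/T;\bZ/p)$, and hence has a well-defined expansion $\sum_i v_iz_i$ modulo $\II$.

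The real content of the lemma is the identification $\pi^*z_i=Q_ix$, and this is exactly what your second step does not prove. ``Multiplication by $v_i$ realizes $Q_i$'' is a slogan; the precise input is that for connective Morava $K$-theory the composite $\rho\circ\delta$ of the Bockstein for $\Sigma^{2(p^i-1)}k(i)\stackrel{v_i}{\to}k(i)\to H\bZ/p$ with the reduction $\rho$ equals $Q_i$ (equivalently $d_{2p^i-1}=v_iQ_i$ in the AHSS). Even granting this, the natural argument gives: from $\rho(i^*b)=0$ one gets $i^*b=v_iu_i$ in $k(i)^*(G/T)$, and from $\pi^*i^*b=0$ (the composite $G\to G/T\to BT$ is null) that $\pi^*u_i=\delta(w)$ for \emph{some} $w\in H^*(G;\bZ/p)$, whence $\pi^*\rho(u_i)=Q_iw$; nothing so far forces $w=x$. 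Identifying $w$ with $x$ is where the transgression hypothesis $d(x)=\bar b$ must actually be used, via the pair $(G/T,G)$ or a comparison with the universal fibration, i.e.\ precisely the ``filtered-multiplicative bookkeeping'' you defer to the end. Since that identification is the whole point of the statement (the paper itself records the lemma without proof as the known $Q_i$--$v_i$ relation from earlier work of the author), your proposal has a genuine gap rather than an omitted routine verification; also, your appeal to ``injectivity of $\pi^*$'' is unfounded ($\pi^*$ kills the ideal generated by $S(t)^+$), though only existence of $y(i)$ is needed.
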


For the algebraic closure $\bar k$ of $k$, let us write $\bar X=X|_{\bar k}$.  Then considering 
(2.1) over $\bar k$, we see
\[CH^*(\bar R(\bG))/p\subset P(y),\quad 
CH^*(\oplus _i  \bT^{\otimes s_i})\cong 
S(t)/(b_1,...,b_{\ell})
.\]

Moreover when $\bG$ is versal, we can see (\cite{YaC}) that
$CH^*(R(\bG))$ is additively generated by
products of $b_1,...,b_{\ell}$ in (2.2) i.e., 
$CH^*(\bar R(\bG)/p\cong P(y)$.
Hence we have surjections 
$CH^*(BB_k)\to CH^*(\bF)\stackrel{pr.}{\to} CH^*(R(\bG)).$

For ease of notations, let us write
\[A(b)=\bZ/p[b_1,...,b_{\ell}],\quad (b)=ideal(b_1,...,b_{\ell})\subset S(t)/p.\]
By giving the filtration on $S(t)$ by $b_i$, we 
can write (additively)
\[gr S(t)/p\cong A(b)\otimes S(t)/(b) .\]
In fact $x\in S(t)/p$ is written as
\[ x=\sum_Ib(I)t(I) \quad  for\
b(I)\in A(b),\ \  and\ \ 0\not =t(I)
\in S(t)/(b).\] 

In particular, we have maps
$ A(b)\stackrel{i_A}{\to} CH^*(\bF)/p\to CH^*(R(\bG))/p.$
We also see that
the above composition map is surjective.
\begin{lemma} (\cite{YaC})
Suppose that there are $f_1(b),...,f_s(b)\in A(b)$ such that 
$CH^*(R(\bG))/p\cong A(b)/(f_1(b),...,f_s(b))$. Moreover if $f_i(b)=0$ for $1\le i\le s$ 
also in $CH^*(\bF)/p$, we have the isomorphism
\[CH^*(\bF)/p\cong S(t)/(p, f_1(b),...,f_s(b)).\]
\end{lemma}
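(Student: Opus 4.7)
The plan is to produce a surjection $\bar\varphi\colon S(t)/(p,f_1(b),\dots,f_s(b)) \twoheadrightarrow CH^*(\bF)/p$ and then verify by a graded dimension count that it is an isomorphism.

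First I would build $\bar\varphi$. Since $\bG$ is (implicitly) versal, the result of Karpenko--Merkurjev invoked earlier in the introduction gives that $j^*\colon S(t)=CH^*(BB_k)\to CH^*(\bF)$ is surjective, and hence so is its reduction $\varphi\colon S(t)/p\twoheadrightarrow CH^*(\bF)/p$. By the hypothesis $f_i(b)=0$ in $CH^*(\bF)/p$ for every $i$, the kernel of $\varphi$ contains the ideal $(f_1(b),\dots,f_s(b))$, so $\varphi$ descends to the required surjection $\bar\varphi$.

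Next I would compare graded $\bZ/p$-dimensions. The key input is that $(b_1,\dots,b_\ell)$ is a regular sequence in $S(t)/p$, which follows from the topological transgressions $d_{|x_i|+1}(x_i)=\bar b_i$ recorded in the spectral sequence (2.1): this makes $S(t)/p$ a free module over $A(b)=\bZ/p[b_1,\dots,b_\ell]$ on any lift of a $\bZ/p$-basis of $S(t)/(b)$. Quotienting by the ideal generated by the elements $f_i(b)\in A(b)$ therefore yields the additive isomorphism
\[
S(t)/(p,f_1(b),\dots,f_s(b))\ \cong\ \bigl(A(b)/(f_1(b),\dots,f_s(b))\bigr)\otimes_{\bZ/p} S(t)/(b).
\]
On the target side, the Petrov--Semenov--Zainoulline decomposition (1.4) together with the identification $CH^*(\oplus_i\bT^{\otimes s_i})\cong S(t)/(b)$ gives the additive isomorphism
\[
CH^*(\bF)/p\ \cong\ CH^*(R(\bG))/p \otimes_{\bZ/p} S(t)/(b),
\]
and the hypothesis $CH^*(R(\bG))/p\cong A(b)/(f_1(b),\dots,f_s(b))$ identifies this with the right hand side of the previous display. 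Thus both the source and the target of $\bar\varphi$ have the same Poincar\'e series.

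Finally, $\bar\varphi$ is a degree-wise surjection between graded $\bZ/p$-vector spaces whose components have equal finite dimension in every degree, and hence is an isomorphism. The only genuinely non-formal ingredient is the additive description of $CH^*(\bF)/p$ coming from the motivic decomposition; once this and the regular-sequence property of $(b_1,\dots,b_\ell)$ are granted, the argument reduces to the Poincar\'e-series comparison above.
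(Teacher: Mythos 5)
Your argument is correct as far as this paper's quoted inputs allow one to check: Karpenko--Merkurjev surjectivity of $j^*$ for a versal torsor gives the surjection $\bar\varphi$, the hypothesis $f_i(b)=0$ in $CH^*(\bF)/p$ lets it descend, and the two additive identifications (freeness of $S(t)/p$ over $A(b)$ coming from the regular sequence $(\bar b_1,\dots,\bar b_\ell)$ quoted from [Tod2], [Mi-Ni], and the Petrov--Semenov--Zainoulline decomposition (1.4) with Tate multiplicities matching $S(t)/(b)$ in the versal case) give equal finite Poincar\'e series, so the graded surjection is an isomorphism. The paper itself gives no proof of this lemma, only the citation to [YaC], so a direct comparison is not possible; but note that your route runs opposite to the paper's intended logic. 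In \S 3 the author uses this very lemma to \emph{derive} the additive decomposition $CH^*(\bF)/p\cong CH^*(R(\bG))\otimes S(t)/(b)$ as ``a very weak version of the decomposition theorem \dots without using deep theories of motives'' (Corollary 3.5), whereas you take the motivic decomposition (1.4), together with the identification of the complementary Tate factor with $S(t)/(b)$ and the versal fact $CH^*(\bar R(\bG))/p\cong P(y)$, as the key input. This is not circular within this paper, since (1.4) is quoted independently from [Pe-Se-Za]/[Se-Zh], but it does make the lemma a consequence of the machinery the surrounding text is partly designed to bypass; the hints in \S 2 (e.g.\ that $CH^*(R(\bG))/p$ is generated by products of the $b_i$, and the filtration argument of Lemma 2.3) suggest the original [YaC] proof works more directly with the $A(b)$-filtration of $S(t)/p$ and the projection $CH^*(\bF)/p\to CH^*(R(\bG))/p$ rather than with the full motivic splitting. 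Two small points of care: the regular-sequence property of the $\bar b_i$ is part of the quoted theorem, not a formal consequence of the existence of transgressions; and versality must indeed be assumed, both for the surjectivity of $j^*$ and for the identification of the Tate multiplicities with $S(t)/(b)$, so it is worth stating that hypothesis explicitly rather than leaving it implicit.
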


For $N>0$, let us  write \ \ 
$A_N=\bZ/p\{b_{i_1}...b_{i_k}| |b_{i_1}|+...+|b_{i_k}|\le N\}.$
\begin{lemma}
Let $pr: CH^*(\bF)/p\to CH^*(R(\bG))/p$, and 
 $0\not =b\in Ker(pr)$.  Then  
$ b=\sum b'u'$ with $b'\in A_{N},$ $u'\in S(t)^+/(p,b_1,...,b_{\ell})$
 i.e., $ |u'|>0.$
\end{lemma}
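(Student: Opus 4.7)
The plan is to leverage the motivic decomposition (1.4) to supply a splitting of $CH^*(\bF)/p$ compatible with the additive decomposition $S(t)/p \cong A(b)\otimes S(t)/(b)$ recalled just before the lemma. First, I would translate (1.4) into the Chow-theoretic statement
\[ CH^*(\bF)/p \;\cong\; CH^*(R(\bG))/p \,\otimes_{\bZ/p}\, V, \qquad V := CH^*\bigl(\bigoplus_i \bT^{\otimes s_i}\bigr)/p, \]
note that $V \cong S(t)/(p,b_1,\dots,b_\ell)$ as graded $\bZ/p$-vector spaces, and observe that under this identification the map $pr$ is exactly the projection onto the $\bT^{\otimes 0}$-summand, i.e.\ onto the coefficient of $1 \in V$.

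Next I would set up compatible representatives. Fix monomials $\{t(I)\}_I$ in $S(t)$ whose images form a homogeneous $\bZ/p$-basis of $S(t)/(p,b)$, with $t(0) = 1$. For versal $\bG$, the composite $S(t)/p \to CH^*(\bF)/p$ is surjective by Merkurjev--Karpenko, so every element of $CH^*(\bF)/p$ lifts to $S(t)/p$ and then expands, via the splitting $S(t)/p = A(b)\otimes S(t)/(b)$, as
\[ x \;=\; \sum_I b(I)\,\overline{t(I)} \qquad\text{in } CH^*(\bF)/p, \qquad b(I) \in A(b). \]
Matching this against the motivic splitting identifies the class of $b(0)$ in $CH^*(R(\bG))/p$ with $pr(x)$: it is the coefficient of $\overline{t(0)} = 1$.

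Now apply the setup to an element $0 \ne z \in \Ker(pr)$ (I write $z$ to avoid clashing with the $b_i$). Since $pr(z) = 0$, the element $b(0) \in A(b)$ maps to $0$ in $CH^*(R(\bG))/p$. Because the summand $CH^*(R(\bG))/p\cdot 1 \subset CH^*(\bF)/p$ is a direct summand under the motivic decomposition, the product $b(0)\cdot 1$ already vanishes in $CH^*(\bF)/p$. Hence the representation reduces to
\[ z \;=\; \sum_{I \neq 0} b(I)\,\overline{t(I)}, \]
with $|t(I)| > 0$, i.e.\ $\overline{t(I)} \in S(t)^+/(p,b)$. Since $|b(I)| \le |z|$, each $b(I)$ lies in $A_N$ for $N = |z|$, as required.

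The main technical point, and likely obstacle, is the compatibility claim in the second paragraph: identifying the $\overline{t(0)} = 1$ component with the factor $R(\bG)$ in the motivic decomposition, so that elements of the kernel of $A(b) \to CH^*(R(\bG))/p$ vanish in $CH^*(\bF)/p$ at the level of multiplication by $1$. This is exactly what the direct-summand form of (1.4) delivers, and it is the step where versality---through the surjectivity of $j^*$---is essential.
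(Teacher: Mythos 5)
Your overall outline (lift $b$ through the surjection $S(t)/p\to CH^*(\bF)/p$, expand via $gr\,S(t)/p\cong A(b)\otimes S(t)/(b)$, and discard the $t(0)=1$ term using $pr(b)=0$; the $A_N$ condition being mere degree bookkeeping) is the natural one, and it matches what the paper leaves implicit: the lemma is stated there without an explicit proof, immediately after the versal structure facts from \cite{YaC} are recalled. The problem is the justification of the one step that carries all the weight. You claim that the direct-summand form of (1.4) "delivers" both (i) that $pr$ kills every term $b(I)\overline{t(I)}$ with $|t(I)|>0$, and (ii) that the image of $A(b)$ in $CH^*(\bF)/p$ meets $\Ker(pr)$ trivially, so that $b(0)\cdot 1$ already vanishes in $CH^*(\bF)/p$. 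Neither is a formal consequence of (1.4): that decomposition is an abstract isomorphism of $CH^*(\bF)/p$ with a direct sum of shifted copies of $CH^*(R(\bG))/p$, and it says nothing about where the explicit cycle classes $j^*(b(I))$ and $j^*(b(I)t(I))$ sit relative to the summands. On degree grounds alone the component of $j^*(b(I)t(I))$, $|t(I)|>0$, in the $\bT^{\otimes 0}$-summand can be nonzero (the $b_j$ occur in several degrees), and for (ii) you would need to know that $j^*(A(b))$ lies inside (or at least injects under the projection from) the $\bT^{\otimes 0}$-summand; the mere existence of a splitting gives no such control, and indeed the truth of the lemma depends on which projection $pr$ one uses.

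What actually closes this gap is the specific construction behind the statements quoted just before Lemma 2.2: for versal $\bG$, $CH^*(R(\bG))/p$ is additively generated by the images of products of $b_1,\dots,b_\ell$ and the Tate part corresponds to $S(t)/(b)$ (cf.\ \cite{YaC}, and Corollary 3.6 of this paper, which records precisely the compatible additive isomorphism $CH^*(\bF)/p\cong CH^*(R(\bG))/p\otimes S(t)/(b)$). That compatibility must be invoked from \cite{YaC} (or re-proved, e.g.\ via the restriction to $\bar k$ and the filtration arguments there); assuming it outright, as your second paragraph does, is assuming essentially the content of the lemma. You correctly flagged this as "the main technical point," but the appeal to the direct-summand form of (1.4) does not discharge it, so as written the proof has a genuine gap at its central step.
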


Using these, we can prove
\begin{thm} (\cite{YaC})
Let $G$ be  of type $(I)$ and $rank(G)=\ell$.
Let $\bG$ be a non-trivial $G_k$-torsor.
Then $2p-2 \le \ell$, and we can take $b_i\in S(t)=CH^*(BB_k)$ for $1\le i\le \ell$
such that there are 
 isomorphisms
\[ CH^*(R(\bG))/p\cong \bZ/p\{1,b_1,...,b_{2p-2}\},\]
\[ CH^*(X)/p\cong S(t)/(p, b_ib_j,b_k|1\le i,j\le 2p-2<k\le \ell)\]
where  $\bZ/p\{a,b,...\}$ is the $\bZ/p$-free module generated by $a,b,...$
\end{thm}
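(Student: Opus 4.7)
The proof proceeds in three steps. In the first, realize the $b_i$ as elements of $S(t) = CH^*(BB_k)$ lifting the transgressions of the exterior generators of $H^*(G;\bZ/p)$. By the type $(I)$ hypothesis, $H^*(G;\bZ/p) \cong P(y)\ot\Lambda(x_1,\dots,x_\ell)$ with $P(y)$ generated by a single even class $y$; order the $x_i$ so that $d_{|x_i|+1}(x_i) = \bar b_i$ forms a regular sequence in $S(t)/p$, and lift to $b_i \in S(t)$. The bound $2p-2 \le \ell$ then follows from Lemma 2.1, which identifies each $b_i$ modulo $\II$ with $\sum_j v_j y_i(j)$, where $\pi^* y_i(j) = Q_j x_i$: since $P(y)$ is a truncated polynomial ring on one generator, the $Q_j x_i$ must exhaust the non-unit monomials of $P(y)$, and a degree count (using $|Q_j| = 2p^j - 1$) forces $\ell \ge 2p-2$.

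In the second step, compute $CH^*(R(\bG))/p$. Since $\bG$ is non-trivial and $G$ is of type $(I)$, $\bG$ is versal, so the paragraph preceding Lemma 2.3 yields $CH^*(\bar R(\bG))/p \cong P(y)$ and that $CH^*(R(\bG))$ is additively generated by the products of $b_1,\dots,b_\ell$. After reindexing so that $b_1,\dots,b_{2p-2}$ correspond via Lemma 2.1 to the non-unit monomials of $P(y)$, each product $b_ib_j$ with $1 \le i, j \le 2p-2$ corresponds to a power of $y$ exceeding the truncation and hence vanishes in $CH^*(R(\bG))/p$, while each $b_k$ with $k > 2p-2$ has image in a degree unattained by $P(y)$ and also vanishes. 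This yields $CH^*(R(\bG))/p \cong \bZ/p\{1, b_1, \dots, b_{2p-2}\}$.

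In the third step, apply Lemma 2.3 with the relations $f_i(b) \in \{b_ib_j,\, b_k : 1 \le i,j \le 2p-2 < k \le \ell\}$. These vanish in $CH^*(R(\bG))/p$ by step 2, so by Lemma 2.3 it remains to verify they vanish already in $CH^*(\bF)/p$. Versality of $\bG$ provides the surjection $j^*\colon CH^*(BB_k)/p \to CH^*(\bF)/p$ (Karpenko--Merkurjev), and under the decomposition (1.4) one checks that the images of $b_k$ (for $k > 2p-2$) and of the products $b_ib_j$ lie in summands $\bT^{\ot s_i}$ whose Chow rings receive no contribution from $P(y)$ at the relevant degrees; here Lemma 2.4, controlling the kernel filtration of $pr\colon CH^*(\bF)/p \to CH^*(R(\bG))/p$, is the key ingredient.

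The main obstacle is precisely this last verification — that $b_ib_j$ and $b_k$ vanish in $CH^*(\bF)/p$, not merely in the Rost motive. The inclusion of the ideal generated by these relations into $\Ker(pr)$ is automatic; the reverse requires a careful analysis of how the Milnor-operation images $Q_jx_i$ distribute across the Tate summands of (1.4). This is where the type $(I)$ hypothesis — that $P(y)$ is generated by a single class — is essential, as it tightly controls the support of these images and rules out unexpected contributions in higher filtration.
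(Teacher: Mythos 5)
A preliminary remark: the paper does not prove this theorem at all --- it is quoted from \cite{YaC} (``Using these, we can prove\dots''), and Lemmas 2.1--2.3 surrounding it are themselves imported from that reference; note also that your citations are shifted by one (your ``Lemma 2.3'' and ``Lemma 2.4'' are the paper's Lemma 2.2 and Lemma 2.3). Judged against the argument that scaffolding is meant to support, your outline has the right shape --- first pin down $CH^*(R(\bG))/p$, then apply the paper's Lemma 2.2 after checking the relations hold in $CH^*(\bF)/p$ --- but both substantive steps are missing or rest on an incorrect mechanism. In your Step 2 you never show that $b_1,\dots,b_{2p-2}$ are nonzero and linearly independent in $CH^*(R(\bG))/p$, which is the hard half of the first isomorphism; and your identification of these classes with the non-unit monomials of $P(y)$ is numerically impossible: for type $(I)$, $P(y)$ is a truncated polynomial ring on a single generator, so it has only $p-1$ non-unit monomials, while the theorem asserts $2p-2$ classes. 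The actual count comes from Lemma 2.1: each monomial $y^m$ supports two Chow classes over $k$, one restricting to $p\,y^m+\dots$ and one to $v_1y^m+\dots$ modulo $\II$ (for $Spin(7)$ these are $c_3$ and $c_2$ against the single monomial $y_6$), and proving that both survive in $CH^+(R(\bG))/p$ requires the $BP^*$/algebraic-cobordism analysis of the versal Rost motive carried out in \cite{YaC}; it cannot be extracted from Lemmas 2.1--2.3 alone. For the same reason the vanishing of $b_ib_j$ is not because ``a power of $y$ exceeds the truncation'': by Lemma 2.1 such a product restricts to an element of $I_{\infty}^2$, which is precisely why that lemma is stated mod $\II$.

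The second genuine gap is the one you yourself flag as the main obstacle: the hypothesis of the paper's Lemma 2.2, namely that $b_ib_j$ ($1\le i,j\le 2p-2$) and $b_k$ ($k>2p-2$) vanish already in $CH^*(\bF)/p$ and not merely in $CH^*(R(\bG))/p$. Your argument --- that their images ``lie in Tate summands whose Chow rings receive no contribution from $P(y)$'' --- is not a vanishing argument: the Tate summands carry most of $CH^*(\bF)/p$, and the paper's Lemma 2.3 only constrains the shape of elements of $Ker(pr)$ (as sums $\sum b'u'$ with $u'$ of positive degree); it forces nothing to be zero. Likewise ``degree unattained by $P(y)$'' proves nothing, since the surviving classes $b_i$ themselves sit in degrees $P(y)$ does not attain (e.g.\ $c_2$ in degree $4$ versus $y_6$). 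Finally, the bound $2p-2\le\ell$ is treated by the paper as a known structural fact about type $(I)$ groups (\S 2: ``it is known $rank(G)=\ell\ge 2p-2$''); your degree count for it rests on the same faulty ``exhaust the monomials of $P(y)$'' identification. So the proposal is a reasonable sketch of how the quoted lemmas would be deployed, but the two claims that constitute the theorem are not proved.
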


\section{ relation between $\bG/B_k$ and  $BG$}

Let $h^*(X)=CH^*(X)/I(h)$ for some 
ideal $I(h)$ (e.g., $CH^*(X)/p$).
We note here the following lemma for
 each $G_k$-torsor $\bG$ (not assumed twisted).
\begin{lemma}    For the above $h^*(X)$, 
the composition of the following maps is  zero
for $*>0$
 \[ h^*(BG_k)\to h^*(BB_k)\to h^*(\bG/B_k).\]
\end{lemma}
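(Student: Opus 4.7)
The plan is to exhibit the composition as factoring through $\mathrm{Spec}(k)$ and then invoke vanishing of Chow groups in positive degree on a point.

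First I would unpack the classifying interpretations of the two arrows. The morphism $\bG/B_k\to BB_k$ is the classifying map of the right $B_k$-torsor $\bG\to \bG/B_k$, while $i:BB_k\to BG_k$ is the natural $G_k/B_k$-fibration induced by the inclusion $B_k\hookrightarrow G_k$ (in Totaro's approximation by quotients of open subsets of representations). Writing $f:\mathrm{Spec}(k)\to BG_k$ for the classifying map of the $G_k$-torsor $\bG$ itself, the twisted flag variety is by construction the associated bundle $\bG/B_k=\bG\times^{G_k}(G_k/B_k)$, which realises it as the pullback of $i$ along $f$:
\[
\begin{CD}
\bG/B_k @>>> BB_k \\
@VVV @VViVV \\
\mathrm{Spec}(k) @>f>> BG_k.
\end{CD}
\]
Equivalently, the $G_k$-torsor over $\bG/B_k$ obtained from $\bG\to \bG/B_k$ by extension of the structure group $B_k\hookrightarrow G_k$ is canonically trivial: the assignment $(g,h)\mapsto ([g],g\cdot h)$ descends to a $G_k$-equivariant isomorphism $\bG\times^{B_k}G_k\stackrel{\cong}{\to}\bG/B_k\times_k \bG$ of $G_k$-torsors over $\bG/B_k$.

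Granting this cartesian square, the composition in the statement equals
\[ h^*(BG_k)\stackrel{f^*}{\to}h^*(\mathrm{Spec}(k))\to h^*(\bG/B_k). \]
Since $CH^*(\mathrm{Spec}(k))=\bZ$ is concentrated in degree zero, any quotient $h^*(\mathrm{Spec}(k))=CH^*(\mathrm{Spec}(k))/I(h)$ also vanishes in positive degrees. Hence the composition is zero on $h^{>0}(BG_k)$.

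The only non-formal step is verifying the cartesian square, equivalently the trivialisation $\bG\times^{B_k}G_k\cong \bG/B_k\times_k\bG$. This is a standard but non-tautological bookkeeping exercise with associated bundles, and I expect it to be the main (mild) obstacle; after it is in hand the conclusion is automatic, and no hypothesis on $\bG$ (versal or otherwise) is needed. As a sanity check, this is the algebraic counterpart of the topological statement that the composition $H^*(BG)\to H^*(BT)\to H^*(G/T)$ vanishes in positive degrees because $G/T$ is the fibre of $BT\to BG$ over a point.
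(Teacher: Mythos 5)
Your proposal is correct and follows essentially the same route as the paper: both factor the composite through $\mathrm{Spec}(k)$ via the classifying map of the torsor $\bG$ (the paper writes this as the commutative square $\bG/B_k\to U/B_k$, $\mathrm{Spec}(k)\cong\bG/G_k\to U/G_k$ using Totaro's finite-dimensional approximations) and then use that $h^{>0}(\mathrm{Spec}(k))=0$. The only cosmetic difference is that you phrase the square directly with $BB_k$, $BG_k$ and stress its cartesian/associated-bundle nature, whereas the paper makes it precise through the approximation $U/G_k$; commutativity alone already suffices.
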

\begin{proof} Take $U$ (e.g.,  $GL_N$ for a large $N$)
such that $U/G_k$ approximates the classifying space
$BG_k$ [To3].  Namely, we can take $\bG=f^{*}U$ for the classifying map
$f: \bG/G_k \to U/G_k$.
Hence we have  the following commutative diagram
\[  \begin{CD}
  \bF= \bG/B_k @>>> U/B_k\\
       @VVV      @VVV\\
    Spec(k)\cong \bG/G_k @>>> U/G_k
\end{CD}\]  
where $U/B_k$ (resp. $U/G_k$)
approximates $BB_k$ (resp. $BG_k$).
Since $h^*(Spec(k))=CH^*(Spec(k))/I(h)=0$ for $*>0$, we have the lemma.
\end{proof}

The above sequences of maps in the lemma is not
exact, in general.  However we get some informations
from $h^*(\bF)$ to $h^*(BG_k)$. in particular,   we get
much informations of $h^*(BG_k)$
from $h^*(\bF)$ than from $h^*(G_k/B_k)$ when $\bG$ is versal.

Let us write the induced maps
\[ h^+(BG_k)\stackrel{i^+}{\to}
   h^+(BB_k)\stackrel{j(\bG)^+}{\to}
   h^+(\bG/B_k)\]
where $h^+(-)$ is the ideal of the positive degree parts.
Let us define 
\[D_{h}(\bG)=Ker(j^+)/(Ideal(Im(i^+))
\]

Let $\bG$ be versal and $k'$ is some extension of $k$.
Then
\[ D_h(\bG)\subset D_h(\bG|_{k'})\subset D_h(G|_{\bar k})\cong D_h(G_k).\]
 For ease of arguments we mainly  consider the case
$h^*(X)=CH^*(G)/p$, and write $D_h(\bG)$ simply by $D(\bG)$.

Recall
$gr S(t)/p\cong A(b)\otimes S(t)/(b)$
Let  $f_1,...,f_s\in A(b)$ and write by
\[A(b)(f_1,...,f_s)\quad (resp.\  S(t)(f_1,...,f_s))
\]
the ideal in $A(b)$ (resp.  $S(t)/p$) generated by $f_1,...,f_s$.  Then it is almost immediate
\begin{lemma}
We can write  additively
\[ S(t)(f_1,...,f_s)\cong A(b)(f_1,...,f_s)\otimes
S(t)/(b).\]
\end{lemma}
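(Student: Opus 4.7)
The plan is to exploit the additive decomposition $S(t)/p \cong A(b)\otimes S(t)/(b)$ stated just before the lemma, which is underwritten by the regular sequence property of $b_1,\ldots,b_\ell$. First I would fix a $\bZ/p$-basis $\{u_\mu\}$ of $S(t)/(b)$, lift it to a family $\{\tilde u_\mu\}\subset S(t)/p$, and read the additive decomposition as saying that $\{\tilde u_\mu\}$ is a free $A(b)$-module basis of $S(t)/p$. Every $g\in S(t)/p$ then has a unique expansion $g=\sum_\mu \phi_\mu(b)\tilde u_\mu$ with $\phi_\mu(b)\in A(b)$.

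Next I would check the two inclusions. For the easy direction $\supseteq$: a typical summand of $A(b)(f_1,\ldots,f_s)\otimes S(t)/(b)$ has the form $\alpha f_j\otimes u_\mu$ with $\alpha\in A(b)$, corresponding to $\alpha f_j\tilde u_\mu\in S(t)/p$, which is manifestly a multiple of $f_j$ and therefore lies in the ideal $S(t)(f_1,\ldots,f_s)$. For the reverse inclusion $\subseteq$: take $x=\sum_j g_j f_j$ with $g_j\in S(t)/p$, expand each $g_j=\sum_\mu \phi_{j,\mu}(b)\tilde u_\mu$ with respect to the $A(b)$-basis, and rearrange to get
\[ x=\sum_\mu\Bigl(\sum_j \phi_{j,\mu}(b)f_j\Bigr)\tilde u_\mu. \]
Since each $f_j\in A(b)$, the coefficient $\sum_j \phi_{j,\mu}(b)f_j$ of $\tilde u_\mu$ lies in the ideal $A(b)(f_1,\ldots,f_s)\subset A(b)$, so reading this off via the additive isomorphism places $x$ in $A(b)(f_1,\ldots,f_s)\otimes S(t)/(b)$, as desired.

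The only substantive point I foresee is justifying that lifts of a $\bZ/p$-basis of $S(t)/(b)$ actually form a free $A(b)$-module basis of $S(t)/p$, rather than giving merely a splitting on the associated graded. This follows from $b_1,\ldots,b_\ell$ being a regular sequence in $S(t)/p$ by a standard Koszul complex argument, which is precisely what underwrites the decomposition quoted from the previous paragraph. With that in hand the two containments match up cleanly, and the ``almost immediate'' assessment in the statement is borne out.
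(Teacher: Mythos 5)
Your proof is correct and takes essentially the same approach as the paper: the paper's one-line proof is precisely your rearrangement $x=\sum_\mu\bigl(\sum_j \phi_{j,\mu}(b)f_j\bigr)\tilde u_\mu$, i.e.\ expanding elements of $S(t)/p$ as $A(b)$-combinations of lifts of a $\bZ/p$-basis of $S(t)/(b)$. Your extra step justifying that such lifts give a free $A(b)$-module basis (via the regular sequence $b_1,\ldots,b_\ell$) is exactly the point the paper leaves implicit in its decomposition $gr\,S(t)/p\cong A(b)\otimes S(t)/(b)$.
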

\begin{proof}
Each element $x\in S(t)(f_1,...,f_s)$ can be written as
\[ x=\sum_J( \sum_i b(J)_i f_i) t(J),\quad 
for\ b(J)_i\in A(b),\ \ 0\not =t(J)\in S(t)/(b).\]
\end{proof}
 \begin{lemma} Let $\bG$ be versal. Then 
there are maps
\[ D(G_k)/D(\bG)\subset CH^*(\bF)/p
\stackrel{pr}{\to}CH^*(R(\bG))/p,\]
such that  $pr(D(G_k)/D(\bG))=CH^+(R(\bG))/p$.
\end{lemma}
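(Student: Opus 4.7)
The plan is to construct the injection $\iota:D(G_k)/D(\bG)\hookrightarrow CH^*(\bF)/p$ as the map induced by the pullback $j(\bG)^*$, and then show that post-composing with $pr$ hits every generator of $CH^+(R(\bG))/p$.

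First, since $\bG$ is versal, $\bG|_{\bar k}\cong G_{\bar k}$, so $\bF|_{\bar k}\cong G_k/B_k|_{\bar k}$; combined with the base-change invariance of $CH^*(BB_k)/p\cong S(t)/p$, this gives $\Ker(j(\bG)^+)\subset\Ker(j(G_k)^+)$ and hence $D(\bG)\subset D(G_k)$, which is the chain of inclusions recalled just before Lemma 3.2. Now for $x\in\Ker(j(G_k)^+)$ representing $[x]\in D(G_k)$, set $\iota([x])=j(\bG)^*(x)\in CH^+(\bF)/p$. By Lemma 3.1 the composition $j(\bG)^*\circ i^*$ vanishes, so $\iota$ is independent of the lift modulo $\mathrm{Ideal}(\Img(i^+))$; and if $[x]\in D(\bG)$, then $x\in\Ker(j(\bG)^+)$, forcing $\iota([x])=0$. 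Thus $\iota$ descends to $D(G_k)/D(\bG)$, and its kernel is exactly the image of $D(\bG)$, so it is injective.

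For the surjectivity of $pr\circ\iota$ onto $CH^+(R(\bG))/p$, recall from $\S 2$ that, for versal $\bG$, $CH^*(R(\bG))/p$ is additively generated by products of $b_1,\dots,b_\ell\in S(t)/p$, and the composite $A(b)\hookrightarrow S(t)/p\stackrel{j(\bG)^*}{\to} CH^*(\bF)/p\stackrel{pr}{\to} CH^*(R(\bG))/p$ is surjective. Each $b_i$ lies in $\Ker(j(G_k)^+)$, because on the split side the $b_i$ witness the decomposition $gr\,H^*(G/T)_{(p)}\cong P(y)\otimes S(t)/(b_1,\dots,b_\ell)$ coming from (2.1), and the same regular sequence kills classes in $CH^*(G_k/B_k)/p$. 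Consequently every positive-degree monomial $b(I)\in A(b)$ represents a class $[b(I)]\in D(G_k)$ with $pr(\iota([b(I)]))=b(I)$ in $CH^+(R(\bG))/p$; since such monomials additively span $CH^+(R(\bG))/p$, the desired surjection follows. The image lies in $CH^+$ automatically because $D(G_k)$ consists of positive-degree classes.

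The main obstacle I anticipate is ensuring that the regular sequence $b_1,\dots,b_\ell$ — which is introduced via the Serre spectral sequence for the topological fibering $G\to G/T\to BT$ — lifts to elements of $S(t)$ whose images genuinely vanish in $CH^*(G_k/B_k)/p$, not merely in some associated graded of $H^*(G/T;\mathbb{Z}/p)$. One handles this by taking the $b_i$ as in Lemma 2.1, using the parallel Chow-theoretic decomposition recalled in $\S 2$; once this compatibility is in place, the rest of the argument is formal, relying only on Lemma 3.1 and the Petrov--Semenov--Zainoulline decomposition (1.4).
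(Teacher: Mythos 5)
Your proposal is correct and follows essentially the same route as the paper: you realize $D(G_k)/D(\bG)\cong \Ker(j^*(G_k))/\Ker(j^*(\bG))$ inside $CH^*(\bF)/p$ via the map induced by $j(\bG)^*$ (using $j^*i^*=0$ and $\Ker(j^*(\bG))\subset\Ker(j^*(G_k))$), and you obtain the surjectivity onto $CH^+(R(\bG))/p$ from $\Ker(j^*(G_k))=(b)$ together with the fact, recalled in $\S 2$ for versal $\bG$, that the composite $A(b)\to CH^*(\bF)/p\to CH^*(R(\bG))/p$ is surjective. The only cosmetic difference is that you hit the generators by monomials $b(I)$ directly instead of invoking the decomposition $(b)\cong A(b)^+\otimes S(t)/(b)$ of Lemma 3.2, which is an equivalent bookkeeping of the same argument.
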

\begin{proof}
We consider the map
$ S(t)/p\cong CH^*(BB_k)/p\stackrel{j^*(\bG)}{\to}
CH^*(\bG/B_k)/p$.
   By the definition, we have
\[ D(G_k)/(D(\bG))\cong (Ker(j^*(G_k)/Im(i^+))/(Ker(j^*(\bG))/Im(i^+))\]
\[\cong Ker(j^*(G_k))/Ker(j^*(\bG)) 
\subset  S(t)/(Ker j^*(\bG))\cong  CH^*(\bF)/p
.\]

Recall that 
$CH^*(G_k/B_k)/p\cong P(y)\otimes S(t)/(b)$.
So $Ker(j(G_k))=(b)$.  From lemma 3.2, 
\[(b)=S(t)(b_1,...,b_{\ell}))\cong (A(b)(b_1,...,b_{\ell})
\otimes S(t)/(b)\cong A(b)^+\otimes S(t)/(b).\]
Since $pr S(t)/(b)=\bZ/p\{1\}$, we have the lemma.
\end{proof}

\begin{cor}  Let $\bG$ be versal.
Suppose there are $f_1(b),...,f_s(b)$ in $A(b)$ such that
\[CH^*(\bF)/p\cong S(t)/(p,f_1(b),...,f_s(b)).\]
Then $D(G_k)/D(\bG)\cong CH^+(R(\bG))/p\otimes
S(t)/(b).$
\end{cor}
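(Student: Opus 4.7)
The plan is to start from the identification
$$D(G_k)/D(\bG) \cong Ker(j^*(G_k))/Ker(j^*(\bG))$$
already extracted in the proof of Lemma 3.3 and to compute the right-hand side explicitly under the hypothesis of the corollary. We know $Ker(j^*(G_k))=(b)=S(t)(b_1,\dots,b_\ell)$ in $S(t)/p$, and the assumption $CH^*(\bF)/p\cong S(t)/(p,f_1(b),\dots,f_s(b))$ identifies $Ker(j^*(\bG))$ with the ideal $(f_1(b),\dots,f_s(b))\subset S(t)/p$. Since each $f_i(b)\in A(b)^+$ lies in $A(b)\cdot(b_1,\dots,b_\ell)$, we have $(f_1,\dots,f_s)\subset (b)$, and therefore
$$D(G_k)/D(\bG)\cong (b)\big/(f_1(b),\dots,f_s(b)).$$

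Next I would apply Lemma 3.2 to both ideals. That lemma furnishes the additive decompositions
$$(b)\cong A(b)^+\otimes S(t)/(b),\qquad (f_1,\dots,f_s)\cong A(b)(f_1,\dots,f_s)\otimes S(t)/(b),$$
arising from one and the same filtration of $S(t)/p$ by the $b_i$'s. Because these decompositions come from a common filtration, the inclusion $(f_1,\dots,f_s)\hookrightarrow (b)$ corresponds additively to tensoring $A(b)(f_1,\dots,f_s)\hookrightarrow A(b)^+$ with $S(t)/(b)$. Passing to the quotient therefore yields
$$(b)/(f_1,\dots,f_s)\cong \bigl[A(b)^+\big/A(b)(f_1,\dots,f_s)\bigr]\otimes S(t)/(b).$$

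Finally, by Lemma 2.3 (applied under the hypothesis and the surjectivity of $CH^*(BB_k)/p\to CH^*(R(\bG))/p$ for versal $\bG$), we have $CH^*(R(\bG))/p\cong A(b)/(f_1(b),\dots,f_s(b))$, so that $A(b)^+/A(b)(f_1,\dots,f_s)\cong CH^+(R(\bG))/p$. Combining the last two displays gives the desired isomorphism
$$D(G_k)/D(\bG)\cong CH^+(R(\bG))/p\otimes S(t)/(b).$$

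The only delicate point is the passage from the additive decomposition of Lemma 3.2 to quotients: formally the lemma is an associated-graded statement for the $b$-filtration of $S(t)/p$, so I would make sure to verify that both ideals $(b)$ and $(f_1,\dots,f_s)$ inherit their splittings from the \emph{same} filtration, which is what allows the quotient to split as a tensor product rather than just degenerating to a graded module. Once that compatibility is in hand, the rest is a bookkeeping argument unwinding the definitions.
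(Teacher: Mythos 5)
Your proposal is correct and follows essentially the same route as the paper: it identifies $D(G_k)/D(\bG)$ with $Ker(j^*(G_k))/Ker(j^*(\bG))\cong (b)/(f_1(b),\dots,f_s(b))$, applies Lemma 3.2 to write both ideals as $A(b)^+\otimes S(t)/(b)$ and $A(b)(f_1,\dots,f_s)\otimes S(t)/(b)$ over the common $b$-filtration, and then identifies $A(b)^+/(f_1,\dots,f_s)$ with $CH^+(R(\bG))/p$. The paper's own proof consists of exactly these steps (with the final identification left implicit there, just as it is only sketched in your appeal to Lemma 2.3), so there is no substantive difference.
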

\begin{proof}
The ideal $Ker(j^+(\bG))$ is written
\[ Ker j^*(\bG)\cong S(t)(f_1(b),...,f_s(b))
 \cong ( A(b)(f_1(b),...,f_s(b))\otimes S(t)/(b).\]
Hence we have
$ D(G_k)/D(\bG)\cong A(b)^+/(f_1(b),...,f_s(b))\otimes
   S(t)/(b).$
\end{proof}

\begin{cor}  Let $\bG$ be versal, and assume the supposition in  Lemma 2.2.  Moreover assume 
$Im(i^*)\subset A(b).$  Then there is 
$\tilde D(\bG)\subset D(\bG)$ such that
\[ D(\bG)\cong \tilde D(\bG)\otimes S(t)/(b).\]
\end{cor}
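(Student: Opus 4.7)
The plan is to identify both the numerator $Ker(j^+(\bG))$ and the denominator $Ideal(Im(i^+))$ of $D(\bG) = Ker(j^+(\bG))/Ideal(Im(i^+))$ as tensor products of a piece inside $A(b)$ with the fixed factor $S(t)/(b)$, and then to take the quotient on the first factor. Under the supposition of Lemma 2.2, the computation already carried out in the proof of Corollary 3.4 provides
\[ Ker(j^+(\bG)) \cong A(b)(f_1(b),\dots,f_s(b)) \otimes S(t)/(b), \]
via Lemma 3.2. So the only new content is a parallel description of the denominator.

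First I would pick a finite set of generators $g_1,\dots,g_r$ for $Im(i^+) \subset CH^+(BB_k)/p = S(t)^+/p$. The new hypothesis $Im(i^*) \subset A(b)$ places each $g_j$ in $A(b)$, hence $Ideal(Im(i^+)) = S(t)(g_1,\dots,g_r)$, and Lemma 3.2 applies directly to yield
\[ Ideal(Im(i^+)) \cong A(b)(g_1,\dots,g_r) \otimes S(t)/(b). \]
By Lemma 3.1 this ideal is contained in $Ker(j^+(\bG))$. Setting
\[ \tilde D(\bG) := A(b)(f_1(b),\dots,f_s(b))/A(b)(g_1,\dots,g_r), \]
one obtains the desired additive isomorphism $D(\bG) \cong \tilde D(\bG) \otimes S(t)/(b)$; the inclusion $\tilde D(\bG) \hookrightarrow D(\bG)$ is realized by $x \mapsto x \otimes 1$, using $1 \in S(t)/(b)$.

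The main step to justify carefully is that the inclusion of ideals $Ideal(Im(i^+)) \subset Ker(j^+(\bG))$ provided by Lemma 3.1 respects the two tensor-product splittings coming from Lemma 3.2, so that the quotient really does split off the common factor $S(t)/(b)$. This is precisely where the hypothesis $Im(i^*) \subset A(b)$ is doing work: it forces both sets of generators $\{g_j\}$ and $\{f_i(b)\}$ to lie in $A(b)$, so the canonical normal form $x = \sum_J (\sum_i b(J)_i h_i) t(J)$ underlying Lemma 3.2 applies to both ideals with the same right-hand factor. Without this assumption, elements of $Im(i^+)$ could contribute nontrivial $S(t)/(b)$-components, and the associated graded of $Ideal(Im(i^+))$ would fail to sit inside that of $Ker(j^+(\bG))$ as a pure tensor subfactor on the $A(b)$-side. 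Once both generating systems live in $A(b)$, the splitting of the quotient is immediate, and the asserted decomposition $D(\bG) \cong \tilde D(\bG) \otimes S(t)/(b)$ follows.
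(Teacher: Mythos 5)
Your proposal is correct and is essentially the argument the paper intends: the corollary is stated without a written proof, and the natural route is exactly yours, namely to repeat the proof of the preceding corollary (via Lemma 3.2) for both the numerator $Ker(j^+(\bG))$ and the denominator $Ideal(Im(i^+))$, the new hypothesis $Im(i^*)\subset A(b)$ guaranteeing that the denominator's generators also lie in $A(b)$ so the common factor $S(t)/(b)$ splits off, with $\tilde D(\bG)=A(b)(f_1(b),\dots,f_s(b))/A(b)(g_1,\dots,g_r)$. Your flagged point about compatibility of the two splittings is handled at the same level of rigor as the paper's own Lemma 3.2 and Corollary 3.4, so nothing further is needed.
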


From above corollaries,  we have a very weak version of
the decomposition theorem by Petrov-Semenov-Zainoulline [Pe-Se-Za], without using deep
theories of motives. 
\begin{cor}  Let $\bG$ be versal, and assume the supposition in  Lemma 2.2.  Then we have
an additive  decomposition of   the $mod(p)$ Chow ring
\[ CH^*(\bG/B_k)/p\cong S(t)/(b)\oplus
  D(G_k)/D(\bG) \]
\[\cong (\bZ/p\{1\}\oplus CH^+(R(\bG))/p)
\otimes S(t)/(b)\cong CH^*(R(\bG))\otimes S(t)/(b). \]
\end{cor}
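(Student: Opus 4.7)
The plan is to treat this as an assembly of Lemma~2.2, Lemma~3.2, and Corollary~3.4 --- no genuinely new input is required beyond bookkeeping with the additive structures already in place.

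First I would invoke the hypothesis (the supposition of Lemma~2.2) to obtain the presentation
\[ CH^*(\bG/B_k)/p \;\cong\; S(t)/(p,\,f_1(b),\ldots,f_s(b)), \]
where $CH^*(R(\bG))/p \cong A(b)/(f_1,\ldots,f_s)$. This reduces the problem to understanding the additive structure of this quotient.

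Next, by Lemma~3.2 the ideal $(f_1,\ldots,f_s)\subset S(t)/p$ decomposes additively as $A(b)(f_1,\ldots,f_s)\otimes S(t)/(b)$; together with the additive isomorphism $\gr S(t)/p \cong A(b)\otimes S(t)/(b)$ recalled at the start of \S 2, passing to the quotient yields
\[ CH^*(\bG/B_k)/p \;\cong\; \bigl(A(b)/(f_1,\ldots,f_s)\bigr)\otimes S(t)/(b) \;\cong\; CH^*(R(\bG))/p \otimes S(t)/(b), \]
which is exactly the third displayed isomorphism of the corollary. Finally I would split off the unit as $CH^*(R(\bG))/p = \bZ/p\{1\}\oplus CH^+(R(\bG))/p$; distributing the tensor product identifies the first summand $\bZ/p\{1\}\otimes S(t)/(b)$ with $S(t)/(b)$ and, by Corollary~3.4, identifies the second summand $CH^+(R(\bG))/p\otimes S(t)/(b)$ with $D(G_k)/D(\bG)$. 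This produces the first displayed form and, taken together with the previous step, the middle one.

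The main obstacle is purely a bookkeeping one: Lemma~3.2 supplies only an additive splitting of the ideal $(f_1,\ldots,f_s)$, and the decomposition of $\gr S(t)/p$ is likewise only additive, so I must be careful to phrase the conclusion as an isomorphism of $\bZ/p$-modules rather than of rings. Since the statement of the corollary is explicitly additive, this causes no genuine difficulty; the ring-level failure is precisely what distinguishes this weak form from the full Petrov--Semenov--Zainoulline decomposition and is why the present argument avoids any input from the theory of motives.
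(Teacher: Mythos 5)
Your proposal is correct and follows essentially the same route the paper intends: the paper states this corollary as an immediate consequence of Lemma 2.2 together with Lemma 3.2 and Corollary 3.4 ("From above corollaries..."), which is exactly the assembly you carry out, including the purely additive bookkeeping and splitting off the unit $\bZ/p\{1\}$ from $CH^*(R(\bG))/p$.
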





{\bf Example.}   Let $G$ be of type $(I)$.
Then
\[Ker j^+(G_k)\cong Ideal(b_1,...,b_{\ell})\subset S(t)/p=CH^*(BB_k)/p,\]
\[ Ker j^+(\bG)\cong Ideal(b_ib_j,b_k| 1\le i,j\le 2p-2<k\le \ell)\subset S(t)/p.\]
Hence $D(G_k)/D(\bG)\cong
\bZ/p\{b_1,...,b_{2p-2}\}\otimes S(t)/(b)$.

\section{$CH^*(BG)/Tor$}

Since we have the Becker-Gottlieb transfer also in
$CH^*(X)$ by Totaro, we get the injection
\[ (4.1)\quad  CH^*(BG_k)/Tor\subset CH^*(BT)^{W}\]
for the Weyl group $W=N_G(T)/T$.  From \cite{YaW},
the above injection is always not surjective 
if $H^*(G)$ has $p$-torsion.
In general, to compute $CH^*(BG_k)$ is a difficult problem, but $CH^*(BG_k)/Tor$ seems  more computable.

Recall that  $gr_{geo}^*(X)$ (resp. $gr_{top}^*(X)$)
is the graded associated ring
defined by the geometric (resp. topological)
filtration of the algebraic $K$-theory $K^0_{alg}(X)$ (rep. the topological $K$-theory $K^0_{top}(X)$). 
Namely, it  is isomorphic to the infinite term $E_{\infty}^{2*,*,0}$ (resp. $E_{\infty}^{2*,0}$)
of the motivic (resp. usual) Atiyah-Hirzebruch spectral sequence.  Recall that $BP^*(-)$ is the $BP$-theory with the coefficient ring $BP^*=\bZ_{(p)}[v_1,v_2,...]$.

\begin{lemma}  There is an isomorphism
\[CH^*(BG_k)/Tor\cong gr_{geo}^*(BG_k)/Tor.\]
Moreover if $CH^*(BG_k)\to  gr_{top}^*(BG)/Tor$ (resp.
$(BP^*(BG)\otimes _{BP^*}\bZ_{(p)})/Tor$)   is surjective, then 
\[CH^*(BG_k)/Tor\cong gr_{top}^*(BG)/Tor\quad 
(resp.\  (BP^*(BG)\otimes_{BP^*}\bZ_{(p)})/Tor).\]
\end{lemma}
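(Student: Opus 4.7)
The first isomorphism will follow from the motivic Atiyah--Hirzebruch spectral sequence
\[ E_2^{p,q,n}=H^{p,q}(X;\bZ(n)) \Longrightarrow K^{-p-q}_{alg}(X), \]
whose diagonal $E_2^{2s,s,0}=CH^s(X)$ converges by construction to $gr_{geo}^s(X)$. The edge map $CH^*(BG_k)\to gr_{geo}^*(BG_k)$ is always surjective, and its kernel is the union of the images of the higher differentials $d_r$ landing in the $CH^*$-column. Each such $d_r$ is a composition involving mod-$p$ motivic cohomology operations (Milnor-type $Q_i$'s) for some prime $p$, so its image consists of torsion classes. Quotienting by $Tor$ on both sides therefore promotes the edge map to the claimed isomorphism $CH^*(BG_k)/Tor \cong gr_{geo}^*(BG_k)/Tor$, proving the first assertion.

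For the second assertion I would use the realization map from algebraic to topological $K$-theory (respectively, to $BP$-theory through algebraic cobordism $MGL$), which is compatible with the geometric and topological filtrations. This produces a commutative diagram
\[
\begin{CD}
CH^*(BG_k) @>>> gr_{geo}^*(BG_k) \\
@VVV @VVV \\
gr_{top}^*(BG) @= gr_{top}^*(BG)
\end{CD}
\]
with an analogous diagram for $BP^*(BG)\otimes_{BP^*}\bZ_{(p)}$ in place of $gr_{top}^*(BG)$. The hypothesis says that the left vertical, modulo torsion, is surjective; combined with part (1) this upgrades to a surjection $gr_{geo}^*(BG_k)/Tor\to gr_{top}^*(BG)/Tor$.

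The main obstacle is proving that this last map is injective, i.e., that no extra torsion-free classes are killed in passing from the algebraic to the topological filtration. My plan is to argue that for $G$ split reductive the realization $K^0_{alg}(BG_k)\to K^0_{top}(BG)$ is an isomorphism after completion at the augmentation ideal (the algebraic analogue of Atiyah--Segal, cf.~Totaro), and that the geometric and topological filtrations on this common ring agree after passing to the associated graded and killing torsion; this forces $gr_{geo}^*/Tor\to gr_{top}^*/Tor$ to be injective, turning the surjection above into the desired isomorphism. The $BP$-theory version goes through by the same template: replace topological $K$-theory by $BP$-theory and $K^0_{alg}$ by the quotient of algebraic cobordism $MGL^*(BG_k)$ by the $BP$-relations, and apply the analogous completion theorem. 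The delicate point in either case is to verify that the algebraic-to-topological filtration comparison is injective modulo torsion on the associated graded, which I expect to follow from a diagram chase against $CH^*(BT)^W/Tor$ via the Becker--Gottlieb transfer (4.1).
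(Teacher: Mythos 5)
Your argument for the first isomorphism is essentially sound, though it takes a different route from the paper: you use that the motivic Atiyah--Hirzebruch differentials into the diagonal have torsion image (the clean justification is rational degeneration of the spectral sequence rather than "compositions of $Q_i$'s", which literally describes only the first differential), so that $CH^*\to gr_{geo}^*$ is surjective with torsion kernel. The paper instead gets injectivity of $CH^*(BG_k)/Tor\to gr_{geo}^*(BG_k)/Tor$ by a diagram chase: restriction to $CH^*(BB_k)$ is injective mod torsion by the Becker--Gottlieb transfer, and $CH^*(BB_k)\cong gr_{geo}^*(BB_k)$ since $BT$ is torsion free; your route avoids the transfer here, which is fine.

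The genuine gap is in the second assertion. The whole content there is injectivity of $CH^*(BG_k)/Tor\to gr_{top}^*(BG)/Tor$ (resp.\ to $(BP^*(BG)\otimes_{BP^*}\bZ_{(p)})/Tor$), and you leave exactly this step as a "plan", resting on the claim that after Atiyah--Segal completion the geometric and topological filtrations on $K^0(BG)$ agree modulo torsion on the associated graded. That claim is not proved, is not known in this generality, and is at least as strong as the lemma you are trying to prove; moreover the detour through $gr_{geo}$ is unnecessary, since the hypothesis already gives surjectivity directly from $CH^*(BG_k)$. The missing (and much simpler) idea, which you only gesture at in your final clause, is to run the same square as in part one with the torus on the right: the composite
\[ CH^*(BG_k)/Tor \longrightarrow gr_{top}^*(BG)/Tor \longrightarrow gr_{top}^*(BT)\cong CH^*(BB_k) \]
(resp.\ with $BP^*(BT)\otimes_{BP^*}\bZ_{(p)}$ on the right) agrees with the transfer-injective restriction $CH^*(BG_k)/Tor\hookrightarrow CH^*(BB_k)$, because $BT$ has torsion-free, evenly graded cohomology so its filtration comparisons collapse. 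Hence the left-hand map is injective with no completion theorem at all, and combined with the hypothesized surjectivity this gives the isomorphism. Without carrying out this (or some) injectivity argument, your proposal does not establish the second statement.
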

\begin{proof}
We consider the commutative diagram
\[ \begin{CD}
        CH^*(BG_k)/Tor @>(1)>> CH^*(BB_k) \\
           @V{(2)}VV      @V{\cong}VV  \\
         gr_{geo}^*(BG_k)/Tor  @>{(3)}>>  gr_{geo}^*(BB_k)
\end{CD} \]
There is the Becker-Gottlieb transfer, the map $(1)$
is injective.  Moreover the map $(2)$ is surjective,
and  we have the first isomorphism.
The second isomorphism follows from exchanging
$gr_{geo}^*(-)$ by $gr_{top}^*(-)$ (or by $BP^*(-)\otimes _{BP^*}Z_{(p)}$, and
$CH^*(BB_k)\cong gr_{top}^*(BT)$.
\end{proof}

On the other hand Totaro defines 
the modified cycle map $\bar cl$ such that
the composition $\rho \cdot\bar cl$
\[ (4.2)\quad  CH^*(X)\stackrel{\bar cl}{\to}
      BP^*(X)\otimes_{BP^*}\bZ_{(p)}\stackrel{\rho}{\to} H^*(X;\bZ_{(p)})\]
is the usual clycle map $cl$.  Moreover Totaro conjectures
that $\bar cl$ is isomorphic when $X=BG$ and $k=\bar k$.
More weakly, if the modified cycle map $\bar cl\ mod(Tor)$ is surjective, then 
we have  
$CH^*(BG_k)/Tor \cong (BP^*(BG)\otimes _{BP^*}\bZ_{(p)})/Tor$.  Note that we
have $CH^*(B\bar G_k)/Tor \cong 
CH^*(BG_k)/Tor$ in this case.

By arguments similar to the proof of Lemma 4.1,
(using $CH^*(B\bar B_k)\cong CH^*(BB_k)$)
we have
\begin{lemma}
If $res: CH^*(BG_k)/Tor \to CH^*(B\bar G_k)/Tor$ is surjective, then it is isomorphic.
\end{lemma}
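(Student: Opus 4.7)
The plan is to mimic the proof of Lemma 4.1, replacing the pair $(CH^*(BG_k)/Tor,\ gr_{geo}^*(BG_k)/Tor)$ by $(CH^*(BG_k)/Tor,\ CH^*(B\bar G_k)/Tor)$, and using the fact that both of these inject into a common ring, namely $CH^*(BB_k)\cong CH^*(B\bar B_k)$.

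First I would set up the commutative diagram
\[ \begin{CD}
        CH^*(BG_k)/Tor @>res>> CH^*(B\bar G_k)/Tor \\
           @V{i^*}VV      @V{\bar i^*}VV  \\
         CH^*(BB_k)  @>{\cong}>>  CH^*(B\bar B_k),
\end{CD} \]
where the vertical maps are induced by the inclusion $B_k\hookrightarrow G_k$ (and its base change to $\bar k$). The bottom horizontal map is an isomorphism because $B_k$ is a split Borel whose classifying space has Chow ring $S(t)=\bZ_{(p)}[t_1,\dots,t_\ell]$, and this description is insensitive to extension of the base field. Both vertical maps are injective by the Becker-Gottlieb transfer used in (4.1) (applied over $k$ and $\bar k$ respectively).

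From the commutativity, the composition $\bar i^*\circ res$ equals the composition of $i^*$ with the bottom isomorphism; since $i^*$ is injective, $res$ must be injective as well. Combined with the surjectivity assumption, this gives that $res$ is an isomorphism, finishing the argument.

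The only subtle point, and the main potential obstacle, is verifying that the Becker-Gottlieb transfer argument used to obtain the injection (4.1) remains valid after base change to $\bar k$; this is what the parenthetical remark ``using $CH^*(B\bar B_k)\cong CH^*(BB_k)$'' is pointing to, and once granted, the rest is a formal diagram chase.
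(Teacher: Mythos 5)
Your proposal is correct and matches the paper's intended argument: the paper proves this lemma precisely ``by arguments similar to the proof of Lemma 4.1, using $CH^*(B\bar B_k)\cong CH^*(BB_k)$,'' i.e.\ the same commutative square with the Becker--Gottlieb injection (4.1) forcing $res$ to be injective, so surjectivity gives the isomorphism. Your diagram chase (only the injectivity of $i^*$ over $k$ and the field-independence of $CH^*(BB_k)\cong S(t)$ are actually needed) is exactly this.
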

\begin{cor}  Let $G$ be simply connected.
If $CH^*(B\bar G_k)/Tor$ is generated by
Chern classes, then
$res:CH^*(BG_k)/Tor\cong CH^*(B\bar G_k)/Tor.$
\end{cor}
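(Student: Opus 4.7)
The plan is to reduce to showing surjectivity via Lemma 4.2, and then exhibit explicit lifts of the generating Chern classes from $CH^*(B\bar G_k)/Tor$ to $CH^*(BG_k)/Tor$ using the fact that $G_k$ is split and simply connected.

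First I would invoke Lemma 4.2: the restriction $res: CH^*(BG_k)/Tor \to CH^*(B\bar G_k)/Tor$ is always an injection (it is induced by pullback along $\mathrm{Spec}(\bar k)\to \mathrm{Spec}(k)$, and after inverting torsion the Becker--Gottlieb transfer argument embeds both sides into $CH^*(BT)^W$, which is unchanged by base change). So only surjectivity needs to be shown, and by Lemma 4.2 this will upgrade automatically to an isomorphism.

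Next I would observe that Chern classes commute with base change: given a representation $\rho: G_k \to GL_N$ defined over $k$ and its base change $\bar\rho: \bar G_k \to GL_N$, the associated Chern classes $c_i(\rho) \in CH^*(BG_k)$ satisfy $res(c_i(\rho)) = c_i(\bar\rho) \in CH^*(B\bar G_k)$. Hence every Chern class that arises from a representation defined over $k$ lies in the image of $res$. The crucial input is then the classical fact from split reductive group theory: because $G_k$ is a split reductive group over $k$ and $G$ is simply connected, the character lattice of $T_k$ coincides with the full weight lattice, so every dominant weight of $\bar G_k$ is already a character of $T_k$, and hence every irreducible representation of $\bar G_k$ descends to a representation of $G_k$. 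Thus every Chern class in $CH^*(B\bar G_k)$ arising from a representation of $\bar G_k$ lies in $\Img(res)$.

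By hypothesis $CH^*(B\bar G_k)/Tor$ is generated as a ring by Chern classes of representations of $\bar G_k$. Combined with the previous step this shows $res$ is surjective modulo torsion, and Lemma 4.2 then yields the desired isomorphism $CH^*(BG_k)/Tor \cong CH^*(B\bar G_k)/Tor$.

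The main obstacle, insofar as there is one, is the descent step: one must cite (or recall) the result that for a split simply connected semisimple $k$-group every $\bar k$-rational representation is defined over $k$. This is standard but is the only nontrivial ingredient; the rest of the argument is formal manipulation of Lemmas 4.1 and 4.2.
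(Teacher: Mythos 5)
Your proposal is correct and follows essentially the same route as the paper: reduce to surjectivity of $res$ modulo torsion via Lemma 4.2, then lift the generating Chern classes using the fact that (by Chevalley's theory of split simply connected groups) every representation of $\bar G_k$, equivalently every class in $K^0(B\bar G_k)$, comes from $G_k$. The paper states this descent input as $res:K^0(BG_k)\cong K^0(B\bar G_k)$ rather than in terms of highest-weight representations, but that is the same ingredient.
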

\begin{proof}
When $G$ is simply conned, by Chevalley, we know
$res:K^0(BG_k)\cong K^0(B\bar G_k)$.
Hence  a map $B\bar G_k\to BU(N)$ can be lift
to a map $BG_k\to BU(N)$. This implies that
any Chern class in $CH^*(B\bar G_k)$ can be lift
to an element in $CH^*(BG_k).$
\end{proof}

\quad 

\section{$PGL(3)$ for $p=3$}

Now we consider in the case $(G,p)=(PU(p),p)$,
which has $p$-torsion in cohomology,
but it is not simply connected.
 Its mod $p$ cohomology is
\[ H^*(G;\bZ/p)\cong \bZ/p[y]/(y^p)\otimes
\Lambda(x_1,...,x_{p-1})\quad |y|=2,\ |x_i|=2i-1.\]
So $P(y)/p\cong \bZ/p[y]/(y^p)$ with $|y|=2$.

Since $G$ is not simply connected, $G$ is not of type $(I)$
while $P(y)$ is generated by only one $y$. (However
$CH^*(X)/p$ is quite  resemble to that of 
type $(I)$. Compare Theorem 2.4 and Theorem 5.2
below.)  

By using the map $U(p-1)\to PU(p)$, we know
$d_{2i}(x_i)=c_i$ for the elementary symmetric function
in $H^*(BT_{U(p)})$.
Then we have  
\[ grH^*(G/T;\bZ/p)\cong \bZ/p[y]/(y^p)\otimes
S(t)/(c_1,...,c_{p-1}).\]
\begin{lemma} We have $py^i=c_i\in H^*(G/T)_{(p)}$.
\end{lemma}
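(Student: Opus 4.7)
The plan is to compute directly in the Borel presentation of $H^*(G/T)_{(p)}$, obtained from the identification $G/T = PU(p)/T \cong SU(p)/T_{SU(p)} = Fl(\bC^p)$.

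First, I would set up the framework. The Borel presentation for $SU(p)$ gives
\[ H^*(G/T;\bZ)_{(p)} \cong \bZ_{(p)}[t_1,\ldots,t_p]/(e_1,\ldots,e_p), \]
equivalently the formal identity $\prod_{j=1}^{p}(1+t_j s)=1$ in $H^*(G/T)_{(p)}[[s]]$. The inclusion $U(p-1)\hookrightarrow PU(p)$, $A\mapsto[A\oplus 1]$, induces an isomorphism $T_{U(p-1)} \cong T_{PU(p)}$, under which the coordinate characters $s_j$ on $T_{U(p-1)}$ correspond to $t_j - t_p$ on $T_{PU(p)}$. Hence the elementary symmetric classes $c_i=e_i(s_1,\ldots,s_{p-1})\in H^{2i}(BT_{PU(p)})$ pull back along $j^*$ to $e_i(t_1-t_p,\ldots,t_{p-1}-t_p)$ in $H^*(G/T)_{(p)}$.

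Next I would carry out the generating-function calculation. Splitting off the $j=p$ factor of $\prod_j(1+t_j s)=1$ and substituting $s'=s/(1+t_p s)$, equivalently $1+t_p s = 1/(1-t_p s')$, rearranges the Borel relation into
\[ \prod_{j=1}^{p-1}\bigl(1+(t_j-t_p)s'\bigr)=(1-t_p s')^p \]
in $H^*(G/T)_{(p)}[[s']]$. Equating coefficients of $(s')^k$ for $0\le k\le p-1$ yields $c_k = \binom{p}{k}(-t_p)^k$ in $H^*(G/T)_{(p)}$.

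Finally, I would identify the fiber class: up to the image of $j^*$, $y = -t_p$. The map $j^*\colon H^2(BT_{PU(p)};\bZ)\hookrightarrow H^2(G/T;\bZ)=\bZ\{t_1,\ldots,t_{p-1}\}$ (with $t_p=-\sum_{j<p}t_j$) has image the sublattice $\bZ\{t_j-t_p\}$, which is of index $p$ (the $A_{p-1}$ Cartan determinant); the cokernel $\bZ/p$ is generated by the class of $-t_p$. In the Serre spectral sequence for $G\to G/T\to BT$, this cokernel is precisely detected by the surviving fiber class $y\in H^2(G;\bZ/p)$, so we may choose an integral lift with $y=-t_p$. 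Combining with the previous step gives $c_k=\binom{p}{k}\,y^k$, which for $p=3$ (where $\binom{3}{1}=\binom{3}{2}=3=p$) specializes to $py^i=c_i$, as asserted. The principal step is the lattice/cokernel argument identifying the integral lift of $y$; the generating-function manipulation is then a formal rearrangement of the $SU(p)$ Borel relations.
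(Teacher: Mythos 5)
Your argument is correct, and it takes a genuinely different route from the paper, which in fact states Lemma 5.1 with no proof at all: the mechanism implicit in the paper is the transgression/$BP$-theoretic relation of Lemma 2.1 (namely $c_i=py_{2i}+v_1y(1)+\cdots$ in $BP^*(G/T)/I_{\infty}^2$), in the spirit of Toda--Watanabe's $2y_{2i}=c_i$ for $SO(2\ell+1)$. You instead compute directly in the Borel presentation of $G/T\cong SU(p)/T_{SU(p)}$: the identification $T_{U(p-1)}\cong T_{PU(p)}$ correctly gives $j^*c_i=e_i(t_1-t_p,\dots,t_{p-1}-t_p)$, your generating-function manipulation is a valid formal consequence of the relations $(e_1,\dots,e_p)$ and yields $c_i=\binom{p}{i}(-t_p)^i$, and the lattice argument identifying $-t_p$ with an integral lift of the fiber class $y$ is sound (the root lattice has index $p$ in $H^2(G/T;\bZ)$, $H^*(G/T)$ is torsion free, and $y$ survives since $d_3$ lands in $H^3(BT)=0$, so the cokernel is exactly detected by $y$). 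What your approach buys is the exact coefficient: $c_i=\binom{p}{i}y^i$, so the identity $py^i=c_i$ holds verbatim for $p=2,3$ (in particular for the $PGL(3)$ case this section actually uses), whereas for $p\ge 5$ it can only hold up to the $p$-local unit $\frac{1}{p}\binom{p}{i}$ — indeed $c_1=-pt_p$ forces $y=-t_p$, and then $py^2\neq c_2$ because $\binom{p}{2}\neq p$ — a harmless discrepancy for the paper's mod-$p$ and divisibility arguments, but one that your explicit computation exposes and the paper's bare statement glosses over.
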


\begin{thm} Let $G=PU(p)$ and $X=\bG_k/B_k$.  Then there are isomorphisms
\[ CH^*(R(\bG_k))/p \cong CH^*(R_1)/p\cong \bZ/p\{1,c_1,...,c_{p-1}\},  \]
\[ CH^*(X)/p\cong S(t)/(p,c_ic_j|1\le i,j\le p-1).\]
\end{thm}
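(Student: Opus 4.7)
The plan parallels Theorem 2.4 for type $(I)$ groups: apply Lemma 2.2 with the $f_i(b)$ taken to be the length-two monomials $c_ic_j \in A(b) = \bZ/p[c_1,\dots,c_{p-1}]$. First I would identify the transgressions in the spectral sequence (2.1). The inclusion $U(p-1)\hookrightarrow PU(p)$ given by $A\mapsto [\operatorname{diag}(A,\det A^{-1})]$ makes the $\mathrm{mod}\,p$ generators $x_1,\dots,x_{p-1}$ of $H^*(PU(p);\bZ/p)$ pull back from the corresponding generators of $H^*(U(p-1);\bZ/p)$, whose classical transgression is $d_{2i}(x_i)=c_i$, the $i$-th elementary symmetric function in $S(t)/p$. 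Thus $\bar b_i = c_i$ for $1\le i\le \ell = p-1$, and over $\bar k$
\[
 CH^*(\bar\bF)/p \cong P(y)\otimes S(t)/(c_1,\dots,c_{p-1}), \qquad P(y) = \bZ/p[y]/(y^p),
\]
so that $CH^*(\overline{R(\bG)})/p \cong P(y)$ additively.

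Next, for a versal torsor $\bG$, by the discussion preceding Lemma 2.2 the ring $CH^*(R(\bG))/p$ is additively generated by products of the $b_i = c_i$, matching the $\bZ/p$-basis $\{1,y,\dots,y^{p-1}\}$ of $CH^*(\overline{R(\bG)})/p$ under $c_i \leftrightarrow py^i$ (the latter being Lemma 5.1). To pin down the ring structure I would compute, using Lemma 5.1 integrally on $G/T$,
\[
 c_ic_j = (py^i)(py^j) = p^2y^{i+j} = \begin{cases} p\cdot c_{i+j}, & i+j\le p-1,\\ 0, & i+j\ge p,\end{cases}
\]
where in the second case $y^p = 0$ in $CH^*(\overline{R(\bG)})$. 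Either way $c_ic_j \equiv 0\pmod p$, giving
\[
 CH^*(R(\bG))/p \cong A(b)/(c_ic_j \mid 1\le i,j\le p-1) \cong \bZ/p\{1,c_1,\dots,c_{p-1}\},
\]
which matches the Chow ring of the classical degree-two Rost motive $R_1$ associated with the generic degree-$p$ division algebra defining the versal $PU(p)$-torsor.

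To obtain the second isomorphism via Lemma 2.2 it remains to check that the $f_i = c_ic_j$ already vanish in $CH^*(\bF)/p$, not merely in $CH^*(R(\bG))/p$. The same integral identity $c_i = py^i$, now on $\bF$ itself through the $R(\bG)$-summand of the motivic decomposition (1.4), gives $c_ic_j = p^2y^{i+j} \equiv 0\pmod p$ in $CH^*(\bF)/p$, and Lemma 2.2 then yields
\[
 CH^*(\bF)/p \cong S(t)/(p,c_ic_j \mid 1\le i,j\le p-1).
\]

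The hardest step is this last one: transferring the integral identity $py^i = c_i$ from $G/T$ to the twisted versal $\bF$ in a way that actually kills the products $c_ic_j$ in $CH^*(\bF)/p$ and not only after projecting to $R(\bG)$. This requires exploiting the Petrov--Semenov--Zainoulline decomposition $M(\bF)_{(p)} \cong R(\bG)\otimes(\oplus_i \bT^{\otimes s_i})$ together with the explicit identification $R(\bG)\cong R_1$ for the versal torsor, so that $c_i$ sits in the $R(\bG)\otimes 1$ piece modulo Tate-twist correction terms whose multiplication is controlled by Step 2.
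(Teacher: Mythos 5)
Your outline is the right one and matches the paper's framework (identify the transgressions $d_{2i}(x_i)=c_i$ via $U(p-1)\to PU(p)$, use Lemma 5.1, and feed the relations $f=c_ic_j$ into Lemma 2.2), but the crucial step is not actually proved. The multiplicative computation $c_ic_j=(py^i)(py^j)=p^2y^{i+j}$ takes place in $H^*(G/T)$, i.e.\ in $CH^*(\bar\bF)$ over $\bar k$, and mod $p$ it is vacuous there: by Lemma 5.1 every $c_i$ is already $0$ in $CH^*(\bar\bF)/p$, so nothing about $CH^*(\bF)/p$ or $CH^*(R(\bG))/p$ for the twisted versal torsor follows from it. What the theorem asserts is precisely that over $k$ the classes $c_1,\dots,c_{p-1}$ are \emph{nonzero} and independent mod $p$ while their pairwise products vanish; neither half is established by your argument. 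The nonvanishing needs an input such as the integral structure of the Rost motive $R_1$ of the generic degree-$p$ division algebra (restriction $CH^i(R_1)\to\bZ\{y^i\}$ injective with image $p\bZ\{y^i\}$), the torsion index, or a Morava $K$-theory argument as in Lemma 7.9; citing ``$R(\bG)\cong R_1$ and its known Chow groups'' at this point essentially assumes the first displayed isomorphism rather than proving it.

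The second gap is the one you yourself flag: killing $c_ic_j$ in $CH^*(\bF)/p$ and not merely in $CH^*(R(\bG))/p$. Your proposed route through the Petrov--Semenov--Zainoulline decomposition does not work as stated, because (1.4) is only an additive motivic decomposition, and by Lemma 2.3 an element of $Ker(pr)$ may be a nonzero sum $\sum b'u'$ with $u'\in S(t)^+/(b)$; so $pr(c_ic_j)=0$ does not force $c_ic_j=0$ in $CH^*(\bF)/p$. To descend the integral identity $c_ic_j=pc_{i+j}$ (resp.\ $c_ic_j=0$ for $i+j\ge p$) from $\bar k$ to $k$ one needs control of torsion over $k$: for instance, the kernel of $res:CH^*(\bF)\to CH^*(\bar\bF)$ is torsion (transfer over a splitting field), so torsion-freeness of $CH^*(\bF)$ --- which is exactly the content of the results of Karpenko [Kar] and \cite{YaC} that the paper quotes --- gives $c_ic_j-pc_{i+j}=0$ integrally and hence the relations mod $p$, after which Lemma 2.2 applies. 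Without such an ingredient (or the cobordism-theoretic argument of \cite{YaC}), the step from the split computation to the versal one is missing, and this is the heart of the theorem.
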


By Vistoli [Vi], it is known that $CH^*(BG)/Tor\cong
CH^*(BB_k)^W$.  However its ring structure is not 
mentioned  except for $p=3,5$.  (As additive groups it isomorphic to $\bZ_{(p)}[c_2,...,c_{p}]$, but they are not
isomorphic as  rings.)

We compute $D(\bG)$ only for $PU(3)$
\[CH^*(\bF)/3\cong S(t)/(3,c_1^2,c_1c_2,c_2^2).\]
By Vistoli and Vezzosi ( Theorem 14.2  in [Vi]), we have
\[ CH^*(BG_k)/Tor \cong \bZ_{(3)}[c_2',c_3',c_6']/(
27c_6'-4(c_2')^3-(c_3')^2).\]
Each  element $c_i'$ is written using $c_i$ in $(S(t)=CH^*(BB_k)$ (see page 48 in [Vi]) as 
\[ c_2'=3c_2-c_1^2,\quad c_3'=27c_3-9c_1c_2+2c_1^3,\quad c_6'=4c_2^3+27c_3^2\ mod(c_1).\]
Hence the map $i^*$ $mod(3)$ is given as
\[ c_2'\mapsto c_1^2,\quad
c_3'\mapsto c_1^3,\quad c_6'\mapsto c_2^3\ mod(c_1).\]
\begin{prop}
Let $(G,p)=PU(3),3)$ and $\bG$ be versal.  Then
\[D(\bG)\cong \bZ/3\{c_1c_2,c_2^2,c_1c_2^2\}\otimes
   S(t)/(c_1,c_2) \]
\end{prop}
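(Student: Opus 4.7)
The plan is to invoke Corollary 3.5 to reduce the computation to ideal arithmetic in the polynomial ring $A(b)=\bZ/3[c_1,c_2]$. First, from Theorem 5.2 specialized to $(PU(3),3)$, we read off that
\[Ker(j^+(\bG))=(c_1^2,\ c_1c_2,\ c_2^2)\subset S(t)/3,\]
whose generators already lie in $A(b)$, so the supposition of Lemma 2.2 needed by Corollary 3.5 holds.

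Next, I would compute the ideal $Ideal(Im(i^+))$ by reducing the Vistoli--Vezzosi formulas modulo $3$:
\[i^*(c_2')\equiv -c_1^2,\qquad i^*(c_3')\equiv -c_1^3,\qquad i^*(c_6')\equiv c_2^3+c_1R,\]
for some $R\in S(t)/3$ of Chow-degree $5$. The critical step is to show $c_1R\in(c_1^2)$, so that modulo $(c_1^2)$ the image of $c_6'$ reduces to $c_2^3$. This is where I expect the main obstacle: one needs to argue that the remainder $R$ lies in $A(b)$ (which I would handle via the $W$-invariance of $c_6'$ and the structure of Weyl invariants), and then a Chow-degree count in $A(b)$ (with $|c_1|=1$ and $|c_2|=2$) shows that the only Chow-degree $5$ monomials in $c_1,c_2$ are $c_1^5$, $c_1^3c_2$, and $c_1c_2^2$, each visibly divisible by $c_1$. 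Thus $R\in(c_1)A(b)$, so $c_1R\in(c_1^2)$; combined with the observation that $-c_1^3\in(c_1^2)$ already, this gives
\[Ideal(Im(i^+))=(c_1^2,\ c_2^3)\qquad\text{in}\ S(t)/3,\]
and in particular $Im(i^+)\subset A(b)$.

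With this in place, Corollary 3.5 immediately yields
\[D(\bG)\cong\tilde D(\bG)\otimes S(t)/(b),\qquad \tilde D(\bG)=(c_1,c_2)^2_{A(b)}/(c_1^2,c_2^3)_{A(b)},\]
so it remains to compute $\tilde D(\bG)$ directly. The quotient $A(b)/(c_1^2,c_2^3)$ has $\bZ/3$-basis $\{1,c_1,c_2,c_1c_2,c_2^2,c_1c_2^2\}$; in this quotient the image of the ideal $(c_1,c_2)^2=(c_1^2,c_1c_2,c_2^2)$ is generated by $c_1c_2$ and $c_2^2$, whose relevant products satisfy $c_1\cdot c_2^2=c_2\cdot c_1c_2=c_1c_2^2$ while $c_1\cdot c_1c_2=0=c_2\cdot c_2^2$. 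Hence $\tilde D(\bG)=\bZ/3\{c_1c_2,c_2^2,c_1c_2^2\}$, which combined with the displayed isomorphism yields the stated formula.
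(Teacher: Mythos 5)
Your overall route is the same as the paper's: the paper's proof is precisely the one-line computation of the quotient of ideals $(c_1^2,c_1c_2,c_2^2)/(c_1^2,c_1^3,c_2^3)$ in $S(t)/3$, with $Ker(j^+(\bG))$ read off from Theorem 5.2 and $Ideal(Im(i^+))$ read off from the mod $3$ reductions of the Vistoli--Vezzosi classes $c_2',c_3',c_6'$; your final ideal arithmetic and the tensor factor $S(t)/(c_1,c_2)$ agree with that. The one place where you add an argument is the step you yourself flag as the main obstacle, and as stated it does not work: you claim that the remainder $R$ in $i^*(c_6')\equiv c_2^3+c_1R$ lies in $A(b)=\bZ/3[c_1,c_2]$ ``via the $W$-invariance of $c_6'$ and the structure of Weyl invariants.'' But $A(b)$ is not the ring of Weyl invariants here: for $PU(3)$ the Weyl group is the full $S_3$ acting on the $A_2$ weight lattice, while $c_1,c_2$ are elementary symmetric only in the two variables coming from $U(2)\subset PU(3)$; mod $3$ the invariant subring is generated by the restrictions of $c_2',c_3',c_6'$ (Vistoli), not by $c_1,c_2$, and for instance $c_2$ (hence $c_2^3$) is not $W$-invariant. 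So $W$-invariance of $i^*(c_6')$ gives no control forcing $R$ into $A(b)$. Note also that the hypothesis $Im(i^*)\subset A(b)$ of Corollary 3.5, on which your reduction relies, hinges on exactly the same unproved claim, so the gap is not peripheral to your write-up.

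The conclusion survives, and can be repaired in either of two ways. One is to do what the paper implicitly does: take the explicit expression of the restriction of $c_6'$ from page 48 of Vistoli, which is what justifies writing $Ideal(Im(i^+))=(c_1^2,c_1^3,c_2^3)$, and then compute the ideal quotient directly in $S(t)/3$ (no appeal to Corollary 3.5 is needed). The other is to show the ambiguity is harmless for the additive statement: whatever $R$ is, the degree $6$ generator $g=c_2^3+c_1R$ of $Ideal(Im(i^+))=(c_1^2,g)$ is nonzero modulo $(c_1)$, so both $(c_1^2,g)$ and $(c_1^2,c_2^3)$ are complete intersections in $\bZ/3[t_1,t_2]$ with generators in degrees $2$ and $6$ and hence have the same Hilbert function; since $Ker(j^+)=(c_1^2,c_1c_2,c_2^2)$ is fixed and contains both ideals (by $j^*i^*=0$), the graded dimensions of $Ker(j^+)/Ideal(Im(i^+))$ coincide with those of $(c_1^2,c_1c_2,c_2^2)/(c_1^2,c_2^3)$, which is the asserted $\bZ/3\{c_1c_2,c_2^2,c_1c_2^2\}\otimes S(t)/(c_1,c_2)$. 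With either repair, your computation of $\tilde D(\bG)$ in $A(b)/(c_1^2,c_2^3)$ and the final answer are correct.
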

\begin{proof}
The result follows form the quotient 
\[ (c_1^2,c_1c_2,c_2^2)/(c_1^2,c_1^3,c_2^3)\]
of ideals in $CH^*(B_k)/3\cong S(t)/3.$
\end{proof}

\quad

\section{$SO(2\ell+1)$}

At first we consider the
orthogonal groups $G=SO(m)$ and $p=2$.
The $mod(2)$-cohomology is written as ( see for example \cite{Tod-Wa}, \cite{Ni})
\[ grH^*(SO(m);\bZ/2)\cong \Lambda(x_1,x_2,...,x_{m-1}) \]
where $|x_i|=i$, and the multiplications are given by $x_s^2=x_{2s}$.

For ease of argument,  we only consider the case
$m=2\ell+1$ so that
\[ H^*(G;\bZ/2)\cong P(y)\otimes \Lambda(x_1,x_3,...,x_{2\ell-1}) \]
\[ grP(y)/2\cong \Lambda(y_2,...,y_{2\ell}), \quad 
letting\ y_{2i}=x_{2i}\ \ (hence \ y_{4i}=y_{2i}^2).\]

The Steenrod operation is given as 
 In particular,  $Sq^k(x_i)= {i\choose k}(x_{i+k}).$
The $Q_i$-operations are given by Nishimoto \cite{Ni}
\[Q_nx_{2i-1}=y_{2i+2^{n+1}-2},\qquad Q_ny_{2i}=0.\]

$Q_0(x_{2i-1})=y_{2i}$ in $H^*(G;\bZ/2)$.
It is well known that   the transgression 
$b_i=d_{2i}(x_{2i-1})=c_i$ is the  $i$-th elementary symmetric function
on $S(t)$.   Hence we have
\begin{lemma}   We have an isomorphism
\[grH^*(G/T)\cong P(y)\otimes S(t)/(c_1,...,c_{\ell}).\]
\end{lemma}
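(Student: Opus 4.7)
The plan is to apply the general spectral-sequence framework of \S 2 to the pair $(G,p)=(SO(2\ell+1),2)$ and to observe that everything needed is already in place once the transgressions have been named.

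From the displayed formula preceding Lemma 2.1, there is a filtration on $H^*(G/T)_{(2)}$ whose associated graded is
$grH^*(G/T)_{(2)}\cong P(y)\otimes S(t)/(b_1,\dots,b_\ell)$, where the $b_i\in S(t)$ are any lifts of the transgressions $\bar b_i=d_{|x_i|+1}(x_i)$ in the spectral sequence (2.1). For $G=SO(2\ell+1)$, the description of $H^*(G;\bZ/2)$ recalled at the top of \S 6 shows that the odd exterior generators are $x_1,x_3,\dots,x_{2\ell-1}$, while the even classes $y_{2i}=x_{2i}$ generate $P(y)$ and are permanent cycles. Thus the only step remaining is to identify $\bar b_i=d_{2i}(x_{2i-1})$ with $c_i\in S(t)/2$.

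This identification is Borel's classical computation and is already stated as ``well known'' in the paragraph just before the lemma. If one wishes to spell it out, I would do so by naturality of the transgression with respect to the inclusion $T\hookrightarrow SO(2\ell+1)$: the defining real representation of $SO(2\ell+1)$ restricts to $T$ as a sum of $\ell$ rotation planes with first Chern roots $t_1,\dots,t_\ell$, plus a trivial line, so its total Stiefel--Whitney class restricts to $\prod_{i=1}^\ell(1+t_i)$ mod $2$, giving $w_{2i}\mapsto c_i$. Since $x_{2i-1}$ transgresses to $w_{2i}$ in the universal Serre spectral sequence for $BSO(2\ell+1)$, naturality yields $\bar b_i=c_i$. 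Substituting $b_i=c_i$ into the general formula gives the stated isomorphism.

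The main obstacle, such as it is, is a bookkeeping point about compatibility of filtrations: the truncated polynomial ring $P(y)$ itself carries a nontrivial filtration under which the relations $y_{2i}^2=y_{4i}$ degenerate to $0$ in $grP(y)/2\cong\Lambda(y_2,\dots,y_{2\ell})$, so one must verify that the spectral-sequence filtration on $H^*(G/T)$ restricts to this same filtration on the fiber factor. Since $H^*(G/T)$ is torsion-free and concentrated in even degrees, there are no multiplicative extension problems at the associated-graded level, and this compatibility is already built into the general statement of \S 2, so no new work is needed beyond transcribing it.
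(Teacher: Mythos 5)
Your proposal follows the paper's own route: the paper simply quotes the ``well known'' Borel transgression $d_{2i}(x_{2i-1})=c_i$ in the spectral sequence (2.1) and substitutes it into the general graded description $grH^*(G/T)_{(p)}\cong P(y)\otimes S(t)/(b_1,\dots,b_\ell)$ from \S 2, exactly as you do. Your naturality argument (restriction of the universal transgression $x_{2i-1}\mapsto w_{2i}$ along $BT\to BSO(2\ell+1)$, with $w_{2i}\mapsto c_i$) is just a correct spelling-out of the fact the paper cites, so the proof is fine and essentially identical in approach.
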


Moreover,  the cohomology   $H^*(G/T)$ is computed 
completely by
Toda-Watanabe \cite{Tod-Wa} (e.g. 
$2y_{2i}=c_i\ mod(4)$).
%
In $BP^*(G/T)/\II$, we have a relation from Lemma 2.1
and the result by Nishimoto
\[ c_i=2y_{2i}+v_1y_{2i+2}+...+v_jy_{2i+2(2^j-1)}+...\]

Let $T$ be a maximal Torus of $SO(m)$ and $W=W_{SO(m)}(T)$
its Weyl group.
Then $W\cong S_{\ell}^{\pm}$ is generated by permutations and change of signs so that $|S_k^{\pm}|=2^kk!$.
Hence 
we have
\[H^*(BT)^{W}\cong \bZ_{(2)}[p_1,...,p_{\ell}]\subset H^*(BT)\cong \bZ_{(2)}[t_1,...,t_{\ell}],\ |t_i|=2 \]
where the Pontriyagin class $p_i$ is defined by
$\Pi_i(1+ t_i^2)=\sum_ip_i$.

Here we recall
\[ H^*(BG;\bZ/2)\cong \bZ/2[w_2,w_3,...,w_{2\ell+1}], \quad  Q_0(w_{2i})=w_{2i+1} \ mod(w_sw_t).\]
It is known $H^*(BG)$ has no higher $2$-torsion
and
\[ H(H^*(BG;\bZ/2); Q_0)\cong (H^*(BG)/Tor)\otimes \bZ/2\]
where $H(A;Q_0)$ is the homology of $A$ with the differential $Q_0$.
Hence we have
\[ H^*(BG)/Tor \cong D\quad where\ \ 
D=\bZ_{(2)}[c_2, c_4,...,c_{2\ell}].\]
The isomorphism $j^*: H^*(BG)/Tor \to H^*(BT)^W$ 
is given by 
$ c_{2i}\mapsto p_i$.

Now we consider the $mod(2)$ Chow ring when $\bG$ is the split group $G_k$.
\begin{lemma} We have additive isomorphism
\[D(G_k)\cong \Lambda(c_1,..,c_{\ell})^+\otimes
S(t,c)\quad with\ S(t,c)\cong S(t)/(c_1,...,c_{\ell}).
\]
\end{lemma}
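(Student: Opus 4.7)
The plan is to identify the two ingredients of
\[ D(G_k)=\Ker(j^+(G_k))/(Ideal(\Img(i^+))) \]
as explicit ideals of $S(t)/2=CH^*(BB_k)/2$ and then invoke Lemma~3.2 to peel off the tensor factor $S(t,c)$.

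First (the kernel), Lemma~6.1 gives $CH^*(G_k/B_k)/2\cong P(y)\otimes S(t)/(c_1,\dots,c_\ell)$, and on the $S(t)$-factor $j^*(G_k)$ is just the canonical quotient. Hence
\[ \Ker(j^+(G_k)) = (c_1,\dots,c_\ell) \subset S(t)/2. \]

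Second (the image), I determine $Ideal(\Img(i^+))\subset S(t)/2$. Since $CH^*(BT)$ is torsion-free, any torsion class in $CH^*(BG_k)$ maps to zero there, so the image equals the mod-$2$ reduction of the image of $CH^*(BG_k)/Tor$. The paragraph preceding Lemma~6.1 records $H^*(BG)/Tor\cong \bZ_{(2)}[c_2,c_4,\dots,c_{2\ell}]$ embedded in $H^*(BT)^W=\bZ_{(2)}[p_1,\dots,p_\ell]$ via $c_{2i}\mapsto p_i$, and the same holds for $CH^*$ by Lemma~4.1 together with the vanishing of the odd Chern classes of the complexified standard representation (whose total Chern class is $\prod_j(1-t_j^2)$). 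In characteristic two,
\[ p_i = e_i(t_1^2,\dots,t_\ell^2) = e_i(t_1,\dots,t_\ell)^2 = c_i^2, \]
so $Ideal(\Img(i^+)) = (c_1^2,\dots,c_\ell^2)\subset S(t)/2$.

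Third (the quotient), the sequence $c_1,\dots,c_\ell$ is regular on $S(t)$ (because $S(t,c)$ is finite-dimensional), so Lemma~3.2 yields the additive decompositions
\[ (c_1,\dots,c_\ell)\cong A(c)^+\otimes S(t,c),\qquad (c_1^2,\dots,c_\ell^2)\cong (c_1^2,\dots,c_\ell^2)A(c)\otimes S(t,c), \]
with $A(c)=\bZ/2[c_1,\dots,c_\ell]$. Dividing and using $A(c)/(c_1^2,\dots,c_\ell^2)\cong \Lambda(c_1,\dots,c_\ell)$ gives
\[ D(G_k)\cong \Lambda(c_1,\dots,c_\ell)^+\otimes S(t,c), \]
as required.

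The main obstacle is Step~2: one has to check that no further generators enter $Ideal(\Img(i^+)) \bmod 2$, which rests on the identification $CH^*(BSO(2\ell+1))/Tor\cong\bZ_{(2)}[p_1,\dots,p_\ell]$ and on the $2$-torsion-freeness of $CH^*(BT)$. Once this is secured, Steps~1 and~3 are formal consequences of Lemmas~6.1 and~3.2.
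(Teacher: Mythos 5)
Your proposal is correct and follows essentially the same route as the paper: the kernel $(c_1,\dots,c_\ell)$ comes from the split flag variety computation, the image ideal is $(c_1^2,\dots,c_\ell^2)$ via $c_{2i}\mapsto p_i$ with $p_i\equiv c_i^2 \bmod 2$, and the additive decomposition then gives $\Lambda(c_1,\dots,c_\ell)^+\otimes S(t,c)$. The only cosmetic difference is that the paper pins down $\Img(i^+)$ using Totaro's computation $CH^*(B\bar G_k)\cong \bZ[c_2,\dots,c_{2\ell+1}]/(2c_{odd})$, whereas you route it through $H^*(BG)/Tor$, Lemma 4.1 and the Chern classes of the (complexified) standard representation; the resulting identification is the same.
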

\begin{proof}
Recall that
\[CH^*(G_k/B_k)/2\cong H^*(G/T)/2\cong
P(y)/2\otimes S(t)/(c_1,...,c_{\ell}).\]
Hence we see 
\[Ker(j)\cong (c_1,...,c_{\ell})\subset
CH^*(BB_k)/2\cong H^*(BT)/2.\]
Here    $ j : p_i\mapsto  c_i^2$ $mod(2)$
by definition of the Pontryagin class $p_i$. 

On the other hand,
we know by Totaro [To1]
\[ CH^*(B\bar G_k)\cong 
BP^*(BG)\otimes _{BP^*}\bZ_{(2)}\cong\bZ[c_2,...,c_{2\ell+1}]/(2c_{odd}).\]
(In fact, $CH^*(B\bar G_k)/Tor\cong CH^*(BG_k)/Tor.$)
Hence $ CH^*(BG_k)/Tor\cong D\cong H^*(BT)^W$ by  $i:c_{2i}\mapsto p_i$. 
Thus the ideal generated by the image is 
 $(Im(i))\cong (c_2,c_4,...,c_{2\ell})\subset S(t)
$.   Since $j:p_i\mapsto c_i^2$, we have 
\[Ker(j)/(Im(i))\cong  (c_1,...,c_{\ell})/(c_1^2,...,c_{\ell}^2)\subset S(t)/(c_1^2,...,c_{\ell}^2).\]
It is additively isomorphic to
$\Lambda(c_1,...,c_{\ell})^+\otimes
 S(t)/(c_1,...,c_{\ell}),$
namely, each element $x\in D(G_k)$ is written
as $x=\sum _Ic(I)t(I)$ with $c(I)\in \Lambda(c_1,...,c_{\ell})^+$ and $t(I)\not =0\in
S(t)/(2,c_1,...,c_{\ell})$. 
\end{proof}

Recall that there is a surjection $D(G_k)\to CH^+(R(\bG))/p$ from Lemma 2.1.  We can see $c_1...c_{\ell}\not =0$
in $CH^*(R(\bG))/2$ (for example, using
the torsion index $t(G)=2^{\ell}$  [To2]).
\begin{thm} (Petrov \cite{Pe}, \cite{YaC}) 
Let $(G,p)=(SO(2\ell+1),2)$ and $\bF=\bG/B_k$
be versal.  
Then $CH^*(\bF)$ is torsion free, and 
\[ CH^*(\bF)/2\cong S(t)/(2,c_1^2,...,c_{\ell}^2),\quad
CH^*(R(\bG))/2\cong \Lambda(c_1,...,c_{\ell}).\]
\end{thm}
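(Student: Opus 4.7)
The strategy is to first identify $CH^*(R(\bG))/2$ as an exterior algebra on the $c_i$'s, then invoke Lemma 2.2 to read off $CH^*(\bF)/2$, and finally lift over $\bZ_{(2)}$ to obtain integral torsion freeness.

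For the first step, versality provides a surjection $A(b) = \bZ/2[c_1,\ldots,c_\ell] \twoheadrightarrow CH^*(R(\bG))/2$ (Section 2), whose target over $\bar k$ is all of $P(y)/2 \cong \Lambda(y_2,\ldots,y_{2\ell})$ by the result of \cite{YaC} cited there; this is a $\bZ/2$-space of dimension $2^\ell$. The defining identity $\prod_i(1+t_i^2)=\sum_i p_i$ gives $p_i \equiv c_i^2 \pmod{2}$, and since $i^*(c_{2i}) = p_i$ in the identification of Section 4, Lemma 3.1 forces $j^*(c_i^2) = 0$ in $CH^*(\bF)/2$, and a fortiori $c_i^2 = 0$ in $CH^*(R(\bG))/2$. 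The surjection therefore factors through $\Lambda(c_1,\ldots,c_\ell) = A(b)/(c_1^2,\ldots,c_\ell^2)$, itself of dimension $2^\ell$; a surjection between $\bZ/2$-vector spaces of the same finite dimension is an isomorphism, proving $CH^*(R(\bG))/2 \cong \Lambda(c_1,\ldots,c_\ell)$.

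Now Lemma 2.2 applies with $f_i = c_i^2$, whose vanishing in $CH^*(\bF)/2$ was already observed, yielding $CH^*(\bF)/2 \cong S(t)/(2,c_1^2,\ldots,c_\ell^2)$.

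For integral torsion freeness I would work with $R := S(t)/(p_1,\ldots,p_\ell)$. The Pontryagin classes form a regular sequence in $S(t) = \bZ_{(2)}[t_1,\ldots,t_\ell]$ because $S(t)$ is free of rank $|W| = 2^\ell \ell!$ over $\bZ_{(2)}[p_1,\ldots,p_\ell]$, so $R$ is a torsion-free complete intersection. By Lemma 3.1, $j^*(p_i) = j^*(i^*(c_{2i})) = 0$, inducing a graded map $R \to CH^*(\bF)$, which is surjective because $j^*: CH^*(BB_k) \to CH^*(\bF)$ is surjective for versal $\bG$ (Merkurjev--Karpenko, cited in the introduction). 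Reducing mod $2$ gives the isomorphism established above, so the kernel $K$ satisfies $K = 2K$; a degreewise Nakayama argument over the local ring $\bZ_{(2)}$ then forces $K = 0$, and $CH^*(\bF) \cong R$ is torsion free.

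The main obstacle is the geometric input that $CH^*(R(\bG))/2$ realizes all of $P(y)/2$ (so that the dimension match $2^\ell = 2^\ell$ forces the factored surjection to be an isomorphism); this is precisely where versality enters essentially, via the result of \cite{YaC} invoked in Section 2. Everything else reduces to formal manipulation with Lemma 2.2, Lemma 3.1, and the regular-sequence property of $p_1,\ldots,p_\ell$ in $S(t)$.
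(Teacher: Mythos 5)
The theorem is quoted in the paper from Petrov \cite{Pe} and \cite{YaC} without proof, so your attempt has to stand on its own; as written it has two genuine gaps, both traceable to the same missing ingredient. First, in identifying $CH^*(R(\bG))/2$: the surjection $\Lambda(c_1,\dots,c_\ell)\to CH^*(R(\bG))/2$ is fine (that is the correct upper bound), but your dimension count compares the source with $CH^*(\bar R(\bG))/2\cong P(y)/2$, i.e.\ with the group over $\bar k$, not with the actual target over $k$. These cannot be identified via restriction: under $res$ each $c_i$ maps to $2y_{2i}+\dots$, so the restriction $CH^+(R(\bG))/2\to CH^+(\bar R(\bG))/2$ kills the very classes you are counting, and nothing you cite rules out that the surjection from $\Lambda(c_1,\dots,c_\ell)$ has a kernel. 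What is needed is a lower bound on $CH^*(R(\bG))/2$ itself, e.g.\ the fact stated in the paper immediately before the theorem that $c_1\cdots c_\ell\neq 0$ in $CH^*(R(\bG))/2$ (via the torsion index $t(G)=2^{\ell}$ \cite{To2}); since the kernel of a graded ring surjection from $\Lambda(c_1,\dots,c_\ell)$ containing any nonzero element would force $c_1\cdots c_\ell=0$, this nonvanishing is exactly what makes the map injective. Alternatively one can argue degreewise that $rank\, CH^n(R(\bG))=rank\, CH^n(\bar R(\bG))$ by a transfer/splitting-field argument and use that $CH^*$ is $2$-localized; but some such input must be supplied, and you supply none.

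Second, the torsion-freeness step is incorrect as stated. From the surjection $R=S(t)/(p_1,\dots,p_\ell)\to CH^*(\bF)$ with kernel $K$ and the mod $2$ isomorphism you conclude $K=2K$; right-exactness of $-\otimes\bZ/2$ only gives $K\subseteq 2R$, which is strictly weaker (the surjection $\bZ_{(2)}\to\bZ/2$ is an isomorphism mod $2$ yet has kernel $2\bZ_{(2)}$), so Nakayama does not apply and $K=0$ does not follow. To close this you again need rank data: since $\bF$ splits over a finite extension, $rank\, CH^n(\bF)=rank\, CH^n(\bar\bF)$, which equals the rank of the free $\bZ_{(2)}$-module $R^n$ (as $H^*(G/T;\bQ)\cong S(t)_{\bQ}/(p_1,\dots,p_\ell)$); then $K^n$ has rank $0$, hence is torsion, hence vanishes inside the free module $R^n$, giving $CH^*(\bF)\cong R$ torsion free. (Equivalently, once the count in the first step is done degreewise with ranks, torsion-freeness of $CH^*(R(\bG))$ and then of $CH^*(\bF)$ via the decomposition (1.4) comes out simultaneously.) Your reduction of the mod $2$ computation of $CH^*(\bF)/2$ to Lemma 2.2, and the use of Lemma 3.1 with $i^*(c_{2i})=p_i\equiv c_i^2$ to see $c_i^2=0$ in $CH^*(\bF)/2$, are correct and are indeed the intended formal part; the essential geometric inputs (torsion index or rank comparison) are what is missing.
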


\begin{cor}   Let $(G,p)=(SO(2\ell+1),2)$ and 
$\bG$ be versal. 
 Then $D(\bG)\cong 0.$\end{cor}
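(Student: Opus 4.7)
The plan is to show directly that $Ker(j_2^+(\bG)) = Ideal(Im(i_2^+))$ as ideals of $S(t)/2 = CH^*(BB_k)/2$, which will force $D(\bG) = 0$ by definition. Everything needed is already packaged in Theorem 6.3 and in the argument of Lemma 6.2, so the proof should essentially be a one-line consequence.

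First I would identify $Ker(j_2^+(\bG))$ exactly. By the versality assumption and Theorem 6.3 we have
\[ CH^*(\bF)/2 \cong S(t)/(2, c_1^2, \ldots, c_\ell^2), \]
so under $j^*(\bG)\colon S(t)/2 \to CH^*(\bF)/2$ the kernel is precisely the ideal $(c_1^2, \ldots, c_\ell^2)$.

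Next I would compute $Ideal(Im(i_2^+))$. Recall from the proof of Lemma 6.2 that $CH^*(BG_k)/Tor \cong D = \bZ_{(2)}[c_2, c_4, \ldots, c_{2\ell}] \cong H^*(BT)^W$ via $c_{2i} \mapsto p_i$. Since mod $2$ the Pontryagin classes satisfy
\[ p_i \equiv c_i^2 \pmod 2 \]
(from $\Pi_i(1+t_i^2) \equiv \Pi_i(1+t_i)^2 \pmod 2$), the image of $i^+$ contains $c_1^2, \ldots, c_\ell^2$, and hence $Ideal(Im(i_2^+)) \supseteq (c_1^2, \ldots, c_\ell^2)$. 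Combined with the general inclusion $Ideal(Im(i_2^+)) \subseteq Ker(j_2^+(\bG))$ from Lemma 3.1, we obtain equality of ideals, so $D(\bG) = 0$.

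The only small technical point to address is that $CH^*(BG_k)/2$ might have contributions from $2$-torsion elements beyond the image of $D$. However, this does not matter: the inclusion $Ideal(Im(i_2^+)) \subseteq Ker(j_2^+(\bG))$ is automatic, and the reverse inclusion already holds at the level of Pontryagin classes in $D/2$. So I do not foresee a real obstacle — this is essentially a direct corollary of Theorem 6.3, and the statement is just the observation that for $SO(2\ell+1)$ at $p=2$ the Pontryagin classes suffice to generate the full kernel $(c_1^2, \ldots, c_\ell^2)$ on the nose.
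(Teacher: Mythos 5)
Your proof is correct and is essentially the paper's own argument: the paper's proof is the one-line identification $Ker(j^+)\cong (c_1^2,\dots,c_\ell^2)\cong Ideal(Im(i^+))$, using Theorem 6.3 for the kernel and the fact (from the proof of Lemma 6.2) that $CH^*(BG_k)/Tor\cong D$ maps onto the Pontryagin classes with $p_i\equiv c_i^2 \bmod 2$. You have simply spelled out the same two identifications in more detail.
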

\begin{proof}
We have
\  $ Ker(j^+)\cong (c_1^2,...,c_{\ell}^2)\cong Ideal(Im(i^+)).$
\end{proof}

\section{$BSpin(n)$ for $p=2$}

In this section, we study Chow rings for the cases $G=Spin(n)$,
$p=2$.
Recall that the $mod(2)$ cohomology is given by Quillen \cite{Qu}
\[ H^*(BSpin(n);\bZ/2)\cong \bZ/2[w_2,...,w_n]/J\otimes
\bZ/2[e]  \]
where $e=w_{2^h}(\Delta)$ and $J=(w_2,Q_0w_2,...,Q_{h-2}w_2)$.  Here $w_i$ is the Stiefel-Whitney class for the natural covering
$Spin(n)\to SO(n)$. The number $2^h$ is the Radon-Hurwitz number, dimension of the spin representation $\Delta$
(which is the representation $\Delta|_C\not =0$ for the center
$C\cong \bZ/2\subset Spin(n)$).  The element $e$ is the Stiefel-Whitney class $w_{2^h}$ of the spin representation $\Delta$.

Hereafter this section we always assume
$G=Spin(n)$ and $p=2$.
For the projection $\pi:Spin(n)\to SO(n)$, the maximal torus $T$
of $Spin(n)$ is given $\pi^{-1}(T')$ for the maximal torus $T'$ of $SO(n)$, and $W=W_{Spin(n)}(T)\cong
W_{SO(n)}(T')$.   Benson-Wood [Be-Wo] determined
$H^*(BT)^W$ and proved
\begin{thm} (Benson-Wood Corollary 8.4 in [Be-Wo])
Let $G=Spin(n)$ and $p=2$.
Then $\rho^*_{H}:H^*(BG)\to H^*(BT)^W$ is surjective
if and only if $n\le 10$ or  $n\not =3,4,5\ mod(8)$ (i.e., 
it is not the quaternion case).
\end{thm}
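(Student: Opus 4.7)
The plan is to compare $H^*(BT)^W$ — computed explicitly by Benson--Wood — with the image of $\rho^*_H$, which modulo torsion is generated by Chern classes of complex representations of $Spin(n)$. Since the composite $Spin(n)\to SO(n)\to U(n)$ yields the defining representation, all Pontryagin classes $p_1,\dots,p_\ell$ lie in the image. The extra generators of $H^*(BT)^W$ beyond $\bZ_{(2)}[p_1,\dots,p_\ell]$ arise because the character lattice of the $Spin(n)$-torus is finer than that of $SO(n)$: it contains the half-sum $\delta=\tfrac12(t_1+\cdots+t_\ell)$, which is a weight of the spin representation $\De$. So the question reduces to: do the Chern classes of $\De$, combined with the $p_i$, generate $H^*(BT)^W$?

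First I would describe $H^*(BT)^W$ explicitly as a finite module over $\bZ_{(2)}[p_1,\dots,p_\ell]$, with module generators given by certain elementary symmetric functions in the spin weights $\tfrac12(\pm t_1\pm\cdots\pm t_\ell)$. Then I would split by the real type of $\De$, which depends on $n\pmod 8$: real for $n\equiv 0,1,7$, complex for $n\equiv 2,6$, and quaternionic for $n\equiv 3,4,5$. In the non-quaternionic cases, $\De$ (or $\De^+\oplus\De^-$ for even $n$) is a genuine complex representation of $Spin(n)$, and a direct symmetric-function expansion of its Chern classes shows they supply the missing module generators over the Pontryagin subring — so $\rho^*_H$ is surjective. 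The small cases $n\le 10$ are checked individually: there the rank and the number of required extra invariants are small enough that the restricted Chern classes available still fill out all of $H^*(BT)^W$.

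In the quaternionic cases with $n>10$, however, $\De$ lifts only to a quaternionic representation, so $c_{2k+1}(\De)=0$ and only the even Chern classes are available. The main step, and the main obstacle, is to show that the subring generated by $p_1,\dots,p_\ell$ together with $c_{2k}(\De)$ is a proper subring of $H^*(BT)^W$. Following Benson--Wood, one would exhibit an explicit odd-indexed elementary symmetric polynomial in the spin weights lying in $H^*(BT)^W$ and show, by comparing Poincar\'e series (or tracking leading monomials in a chosen order), that it cannot be produced from the available Chern classes. One must also rule out help from any other representation of $Spin(n)$: every irreducible either factors through $SO(n)$ — contributing only Pontryagin classes — or is itself quaternionic with vanishing odd Chern classes, so no new invariants can enter. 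This comparison of generating ranks is the delicate point and is precisely what forces the numerical cutoff $n>10$ combined with the quaternionic residue condition.
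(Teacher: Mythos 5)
The paper does not prove this statement at all: it is quoted verbatim as Corollary 8.4 of Benson--Wood [Be-Wo], so there is no internal proof to compare with; your sketch has to stand on its own, and it contains a genuine gap at its very first step. You reduce the problem to whether the Chern classes of $\De$ together with the Pontryagin classes $p_1,\dots,p_\ell$ generate $H^*(BT)^W$, on the grounds that the image of $\rho^*_H$ modulo torsion is generated by Chern classes of complex representations. That identification is false, and it is exactly the point this paper is organized around. The ring $H^*(BSpin(n))/Tor$ contains classes that are not Chern classes of any representation: the integral lifts $s_i$ of the Stiefel--Whitney classes $w_{2^i}$, which restrict to the Benson--Wood invariants $q_{i-2}$ (with $q_1=\tfrac12 p_1$, $q_i^2=2q_{i+1}$), and the Euler class $e=w_{2^h}(\De)$ of the real spin representation, which restricts to $\eta_{\ell-1}$ with $\eta_{\ell-1}^2=\rho^*(c_{2^h}(\De_{\bC}))$. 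Chern classes only produce $2q_i$-type elements and $\eta_{\ell-1}^2$, i.e.\ the subring $D_\ell$ (this is precisely the gap between the Chow-theoretic image, Conjecture 6.7 and Theorem 10.4, and the topological image, Theorem 7.2 and Corollary 7.3 of the paper; see also Lemma 7.6, which shows $s_{i-1}$ is \emph{not} in the image of the cycle map). So under your reduction $\rho^*_H$ would fail to be surjective already for $Spin(7)$ or $Spin(9)$, contradicting the theorem: surjectivity in the non-quaternionic cases is carried by exactly those non-Chern integral classes you have discarded.

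The same confusion undermines your treatment of the quaternionic case: arguing that ``every irreducible either factors through $SO(n)$ or is quaternionic with vanishing odd Chern classes, so no new invariants can enter'' only bounds the representation-theoretic (Chern) subring, not the image of $H^*(BG)$, which is what the theorem is about. The actual obstruction in Benson--Wood is that when $\De$ is quaternionic the relevant integral class restricting to the fractional invariant ($\eta$- or $q$-type generator of $H^*(BT)^W$ over $\bZ_{(2)}[p_1,\dots,p_\ell]$) does not exist in $H^*(BSpin(n))$; establishing this requires the integral cohomology of $BSpin(n)$ (Kono's theorem that there is no higher $2$-torsion, the $Q_0$-homology description of $H^*(BG)/Tor$ used in Section 7), not a Poincar\'e-series comparison of Chern classes. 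To repair your argument you would need to replace ``Chern subring'' throughout by the full image of $H^*(BG)/Tor$, i.e.\ work with $D_\ell\otimes\Lambda_{\bZ}(s_3,\dots,s_\ell,e)$ and its restriction $c_{2i}\mapsto p_i$, $s_i\mapsto q_{i-2}$, $e\mapsto\eta_{\ell-1}$, and then analyze in which residues mod $8$ the classes $s_i$ and $e$ exist with the required restrictions.
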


Moreover,  in this section, we assume 
$Spin(n)$
is in the real case \cite{Qu}, that is $n=8\ell-1,  8\ell+1$
(hence $\rho^*_H$ is surjective and $h=4\ell-1,4\ell$ respectively). 

Benson and Wood define invariants  $q_i$, $\eta_{\ell-1}$ such that 
\[(1)\quad q_1=1/2p_1, \quad  q_i^2=2q_{i+1} \quad with 
\ |q_i|=2^{i+1},\]
\[(2)\quad \eta_{\ell-1}^2=\rho^*(c_{2^h}(\Delta_{\bC}))=\rho^*(e^2),\quad |\eta_{\ell-1}|=2^h.\]
In fact  in $H^*(BT)^W$, it is defined as
$ \eta_{\ell-1}=\Pi_{I\subset \{2,...,\ell\}}
(q_1-(\Sigma_{i\in I}x_i)).$

Then Benson-Wood prove 
\begin{thm} ( Theorem 7.1 in [Be-Wo])  If 
 $n=2\ell+1\ge 7$, then
\[ H^*(BT)^{W}\cong \bZ_{(2)}[p_2,...,p_{\ell},\eta_{\ell-1}]
\otimes \Lambda_{\bZ}(q_1,...,q_{\ell-2})\]
where $\Lambda_{\bZ}(a_1,...,a_k)$ is the free module
generated by $a_1^{\epsilon_1}...a_k^{\epsilon_k}$ for $\epsilon_i=0,1.$
\end{thm}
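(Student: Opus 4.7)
The plan is to compute the invariant ring directly from the description of the weight lattice of $Spin(2\ell+1)$. The character lattice of the maximal torus of $Spin(2\ell+1)$ is the weight lattice of type $B_\ell$, which is the $\bZ$-span of $x_1,\dots,x_\ell$ together with the half-sum $s=(x_1+\cdots+x_\ell)/2$, so $H^*(BT;\bZ_{(2)})$ is the sub-$\bZ_{(2)}$-algebra of $\bQ[x_1,\dots,x_\ell]$ generated by $x_1,\dots,x_\ell,s$. The Weyl group $W=S_\ell\ltimes(\bZ/2)^\ell$ permutes and sign-changes the $x_i$; under the sign change $x_i\mapsto -x_i$ one has $s\mapsto s-x_i$, so $s$ itself is not invariant. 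Since $\bQ[x_1,\dots,x_\ell]^W=\bQ[p_1,\dots,p_\ell]$ with $p_i$ the $i$-th elementary symmetric function in $x_1^2,\dots,x_\ell^2$, every integral invariant is a $\bQ$-combination of the $p_i$, and the task is to determine which combinations are $2$-integral after adjoining $s$.

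Next I would exhibit the generators. The classes $p_2,\dots,p_\ell$ are manifestly integral $W$-invariants. For $q_1=p_1/2$, a short calculation gives $q_1=2s^2-e_2(x_1,\dots,x_\ell)$, which visibly lies in $H^*(BT;\bZ_{(2)})$; invariance under a single sign change follows from $(s-x_i)^2-s^2=-2sx_i+x_i^2$ cancelling the corresponding change in $e_2$, and $W$ is generated by such sign changes together with permutations. Inductively I would define $q_{i+1}:=q_i^2/2$ and check $2$-integrality by expanding $q_i^2$ in the basis $\{x^\alpha s^\varepsilon\}$ and verifying cancellation of odd coefficients using the relation $2s=\sum x_i$. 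The class $\eta_{\ell-1}$ is constructed as a product over a half of the $W$-orbit of $s$, chosen so that the remaining $W$-action only permutes the factors; squaring and matching with the explicit Chern class of the complex spin representation $\Delta_\bC$ yields the relation $\eta_{\ell-1}^2=\rho^*(e^2)$.

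Finally I would establish that these generators exhaust $H^*(BT;\bZ_{(2)})^W$ by comparing Hilbert series. The candidate ring $\bZ_{(2)}[p_2,\dots,p_\ell,\eta_{\ell-1}]\otimes\Lambda_\bZ(q_1,\dots,q_{\ell-2})$ has Hilbert series $(1-t^{2^h})^{-1}\prod_{i=2}^\ell(1-t^{2i})^{-1}\prod_{i=1}^{\ell-2}(1+t^{2^{i+1}})$, while Molien's formula applied to the $W$-action on the enlarged lattice yields the same series. Combined with the inclusion of generators established above, this forces equality.

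The main obstacle is the inductive integrality of the $q_i$: proving $q_i^2/2\in H^*(BT;\bZ_{(2)})$ requires a nontrivial $2$-divisibility that depends essentially on the half-sum $s$, and is the source of the extra invariants beyond those already present in $H^*(BT_{SO};\bZ_{(2)})^W$. A secondary difficulty is choosing the correct half-orbit for $\eta_{\ell-1}$ so that its product is a genuine invariant rather than a determinant-twisted one, and matching its square precisely with the top Chern class of $\Delta_\bC$.
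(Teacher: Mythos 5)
The paper does not prove this statement at all --- it is quoted from Benson--Wood [Be-Wo, Theorem 7.1] --- so your argument has to stand on its own, and as written it has two genuine gaps. The first is the inductive step $q_{i+1}:=q_i^2/2$. With $q_1=\tfrac12 p_1=2s^2-e_2(x_1,\dots,x_\ell)$ one computes $q_1^2=p_2+2\,(2s^4-2s^2e_2+m)$, where $m$ is the sum of the products $x_ix_jx_kx_l$ over distinct unordered pairs $\{i,j\}\neq\{k,l\}$; and $p_2$ is \emph{not} divisible by $2$ in $H^*(BT;\bZ_{(2)})=\bZ_{(2)}[x_1,\dots,x_{\ell-1},s]$, because after substituting $x_\ell\equiv x_1+\dots+x_{\ell-1}$ its mod $2$ reduction is the square of the nonzero polynomial $e_2(x_1,\dots,x_{\ell-1})+e_1(x_1,\dots,x_{\ell-1})^2$. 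Hence $q_1^2/2$ is not an integral class; concretely, for $\ell=3$ one finds $q_1^2=p_2+4\eta_2$. The relation $q_i^2=2q_{i+1}$ recorded in Section 7 can only hold modulo the polynomial generators, so the classes you must adjoin are of the shape $(q_i^2-\hbox{Pontryagin terms})/2$, and proving that such corrected classes exist, are invariant, and are integral is the real content of the theorem: the ``cancellation of odd coefficients'' you propose to verify simply does not occur for the elements you define. (Also, the displayed product formula for $\eta_{\ell-1}$ only makes sense with the degree-two class $s$ in each factor rather than $q_1$; your ``half orbit'' description is the right idea, modulo the sign issue you already flagged.)

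The second gap is the concluding step. A Molien/Hilbert-series comparison only shows that your candidate subring has the same graded ranks as $H^*(BT;\bZ_{(2)})^W$; since both are graded free $\bZ_{(2)}$-modules, an inclusion with equal finite ranks in each degree does not force equality (compare $\bZ_{(2)}[2x]\subset\bZ_{(2)}[x]$). Rationally the invariant ring is $\bQ[p_1,\dots,p_\ell]$ whatever lattice is used, so the whole point of the theorem --- that the listed classes generate the invariants \emph{integrally}, i.e.\ that no further $2$-divisible combinations of the $p_i$, $q_j$, $\eta_{\ell-1}$ are invariant and integral --- is invisible to Molien's formula. Closing this requires an honestly integral argument (for instance degree-by-degree control of mod~$2$ reductions, which is how Benson and Wood in fact proceed); without it, and with the faulty recursion above, the proposal does not prove the theorem.
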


On the other hand, by Kono \cite{Ko},  $H^*(BG;\bZ)$ has no
higher $2$-torsion,  \[ H(H^*(BG;\bZ/2);Q_0)\cong (H^*(BG)/Tor)\otimes \bZ/2.\]
 Benson and Wood also define  $s_i\in H^*(BSO(n);\bZ/2)$  such that
\[ Q_0(s_i)=Q_i(w_2)\quad mod(s_1,...,s_{i-1})\]
and hence $s_i\in H(H^*(BG;\bZ/2);Q_0)$.
So we can identify $s_i \in H^*(BG)/Tor$.

\begin{cor} ([Be-Wo])
The cohomology $H^*(BG)/Tor$ is isomorphic
\[ D_{\ell}\otimes \Lambda_{\bZ}(s_3,...,s_{\ell},e)
\quad with \ D_{\ell}=\bZ_{(2)}[c_4,c_6,...,c_{\ell},c_{2^h}]
\] 
where $c_i=w_i^2$ are  lifts in $H^*(BG;\bZ)/Tor$ of
the same named elements in $H^*(BG;\bZ/2)$.
\end{cor}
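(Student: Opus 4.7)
The plan is to combine three ingredients already laid out in the excerpt: Kono's result that $H^*(BG;\bZ)$ has no higher $2$-torsion, the explicit description of $H^*(BT)^W$ in Theorem 7.1, and the surjectivity of $\rho^*_H:H^*(BG)\to H^*(BT)^W$ asserted in Theorem 7.2 under the real-case hypothesis in force. First I would use that surjectivity together with the Becker-Gottlieb transfer injection $H^*(BG)/Tor\hookrightarrow H^*(BT)^W$ (Section~4) to conclude that $\rho^*_H$ factors as an isomorphism
\[ H^*(BG)/Tor \isoarr H^*(BT)^W \cong \bZ_{(2)}[p_2,\dots,p_\ell,\eta_{\ell-1}]\otimes \Lambda_{\bZ}(q_1,\dots,q_{\ell-2}). \]
So the abstract ring structure of $H^*(BG)/Tor$ is already determined; only the matching of the named generators remains.

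Next I would identify $c_{2i}$, $s_i$ and $e$ with $p_i$, $q_i$ and $\eta_{\ell-1}$. Using Kono's vanishing, $(H^*(BG)/Tor)\otimes\bZ/2\cong H(H^*(BG;\bZ/2);Q_0)$, so it suffices to verify the identifications modulo $2$ and then lift via the isomorphism above. Each $c_{2i}=w_{2i}^2$ is a $Q_0$-cycle (because $Q_0$ is a derivation and we work mod~$2$), and its restriction to $H^*(BT)^W$ is $p_i$ up to a unit by the Whitney-sum formula; likewise Benson-Wood's identity $\eta_{\ell-1}^2=\rho^*(e^2)$ identifies $c_{2^h}=e^2$ with $\eta_{\ell-1}^2$, yielding the polynomial factor $D_\ell$. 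The defining property $Q_0(s_i)=Q_i(w_2)\bmod(s_1,\dots,s_{i-1})$ shows inductively that $s_3,\dots,s_\ell$ descend to nonzero classes in the $Q_0$-homology of $\bZ/2[w_2,\dots,w_n]/J$, producing $\ell-2$ exterior factors that must match $q_1,\dots,q_{\ell-2}$ by a Poincar\'e-series comparison against Theorem~7.1, with $e$ contributing the remaining factor matching $\eta_{\ell-1}$.

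The main obstacle is the final bookkeeping step: showing that the squares $s_i^2$ (and $e^2$) reproduce elements already present in $D_\ell$, rather than introducing new generators in the $Q_0$-homology. This is exactly what the invariant-side relations $q_i^2=2q_{i+1}$ and $\eta_{\ell-1}^2=\rho^*(e^2)$ encode, but translating them back to $Q_0$-homology requires a careful Koszul-style computation with the Benson-Wood filtration, together with tracking how the successive correction terms $s_1,\dots,s_{i-1}$ in the definition of $s_i$ interact with $Q_0$. Once this is in hand, the isomorphism $H^*(BG)/Tor\cong D_\ell\otimes \Lambda_{\bZ}(s_3,\dots,s_\ell,e)$ is forced by dimension-counting on both sides of the isomorphism with $H^*(BT)^W$.
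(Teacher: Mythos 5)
Your derivation is essentially the argument the paper intends: the corollary is simply quoted from Benson--Wood, and it is exactly the combination of Theorem 7.1 (surjectivity of $\rho^*_H$ in the real case), Theorem 7.2 (the description of $H^*(BT)^W$), the transfer injection $H^*(BG)/Tor\hookrightarrow H^*(BT)^W$ from Section 4, and Kono's theorem identifying $(H^*(BG)/Tor)\otimes\bZ/2$ with the $Q_0$-homology, together with the generator dictionary $c_{2i}\mapsto p_i$, $s_i\mapsto q_{i-2}$, $e\mapsto \eta_{\ell-1}$ that the paper records right after the statement. Note also that since $\Lambda_{\bZ}$ is defined as the free module on square-free monomials, the asserted isomorphism is additive, so the multiplicative bookkeeping you flag about $s_i^2$ and $e^2$ (encoded by $q_i^2=2q_{i+1}$ and $\eta_{\ell-1}^2=\rho^*(e^2)$) is not actually needed for the statement as written.
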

 The map $i^*$ is given  with modulo (decomposed elements.e.)
\[  c_{2i}\mapsto p_i,\quad e\mapsto \eta_{\ell-1},\quad 
s_i\mapsto q_{i-2}.\]

For actions of   $Q_i$ on $H^*(BG;\bZ/2)$,
we use the following lemma
\begin{lemma} (Inoue)  
Let us write  $(W)=\bZ/2[w_2,...,w_n]^+$.
In $H^*(BSO(N);\bZ/2)$. we have
\[ (1)\quad Q_i(w_j)=\begin{cases}
                w_{j+2^i-1}\quad  mod(W^2)\quad if\ j=even \\
0\ mod(W^2) \quad j=odd.\end{cases}\]
\[ (2)\quad when \ N<2^{i+1}-1+j,\ \ Q_i(w_j)=w_jw_{2^{i+1}-1}\  mod(W^3).\]
\end{lemma}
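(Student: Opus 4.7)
My plan is to use the splitting principle together with the derivation property of the Milnor operation. Pulling back to $H^*(BT;\bZ/2)=\bZ/2[x_1,\ldots,x_N]$, each $w_j$ becomes the elementary symmetric polynomial $e_j(x_1,\ldots,x_N)$. From $Q_0=Sq^1$ and the recursion $Q_{n+1}=Sq^{2^{n+1}}Q_n+Q_n Sq^{2^{n+1}}$, one shows by induction on $n$ that $Q_n(x_k)=x_k^{2^{n+1}}$ on each one-dimensional summand. Since $Q_i$ is a derivation mod $2$, applying the derivation rule to $w_j=e_j$ yields the monomial symmetric function $m_{(2^{i+1},1^{j-1})}$, and iterating the telescoping identity $p_r e_s=m_{(r,1^s)}+m_{(r+1,1^{s-1})}$ rewrites this as
\[
Q_i(w_j)\;=\;\sum_{j'=0}^{j-1}w_{j'}\,p_{\,2^{i+1}+j-1-j'}.
\]

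For part (1), only the summand with $j'=0$ lies outside $(W^2)$, and Newton's identity gives $p_r\equiv r\,w_r\pmod{(W^2)}$ whenever $r\le N$. Hence $Q_i(w_j)\equiv(2^{i+1}+j-1)\,w_{2^{i+1}+j-1}\equiv(j-1)\,w_{2^{i+1}+j-1}\pmod{(W^2)}$, which is $w_{2^{i+1}+j-1}$ for even $j$ and vanishes for odd $j$ (the displayed $2^i-1$ being read as $2^{i+1}-1$). For part (2), the hypothesis $N<2^{i+1}+j-1$ forces $w_{2^{i+1}+j-1}=0$, so one must descend one further filtration step. I would apply the truncated Newton identity $p_r=\sum_{k=1}^N w_k\,p_{r-k}\pmod 2$ (valid for $r>N$) to $p_{\,2^{i+1}+j-1}$, reduce each inner $p_{r-k}$ modulo $(W^2)$ via (1), and combine with the $j'\ge 1$ contributions of the main expansion. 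The resulting linear combination of products $w_a w_b$ with $a+b=2^{i+1}+j-1$ is then pared down using $w_1=0$ in $BSO(N)$ together with the characteristic-$2$ Frobenius $p_{2k}=p_k^2\in(W^2)$ (which annihilates all even-indexed power sums), to isolate the claimed $w_j\,w_{2^{i+1}-1}$.

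The principal obstacle is the cancellation bookkeeping in (2): both the truncated Newton expansion of the top term $p_{\,2^{i+1}+j-1}$ and the $j'\ge 1$ summands produce pairs $w_a w_b$ with $a+b=2^{i+1}+j-1$ and $a,b\in[2^{i+1}+j-1-N,\,N]$, and one must match these two contributions so that every pair other than $(j,\,2^{i+1}-1)$ cancels modulo $2$. Since $2^{i+1}+j-1$ is odd whenever $j$ is even, exactly one of $a,b$ is odd, and the cancellations reduce to a parity count combined with the range restriction $j'\in[1,j-1]$ coming from the main expansion and with the Frobenius-induced vanishing of $p_{2k}$. Verifying that this combinatorics precisely leaves the single surviving pair $(j,\,2^{i+1}-1)$ is the main combinatorial labour, and it is prudent to confirm the asserted formula in a few small cases (say $i=1$, $j=4$, $N=6$) first to ensure the cancellation pattern is correctly identified before undertaking the general argument.
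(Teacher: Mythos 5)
The paper states this lemma without proof (it is quoted as Inoue's result), so there is no argument of the paper to compare yours against; the only question is whether your proposal stands on its own. For part (1) it does: the splitting principle, the derivation property of $Q_i$ with $Q_i(x_k)=x_k^{2^{i+1}}$, the telescoped expansion $Q_i(w_j)=\sum_{j'=0}^{j-1}w_{j'}\,p_{2^{i+1}+j-1-j'}$, and Newton's identity $p_r\equiv r\,w_r \bmod W^2$ for $r\le N$ (using $w_1=0$) give exactly the stated parity dichotomy, and your reading of the index as $j+2^{i+1}-1$ is the right correction (it matches the paper's later use $Q_i(w_2)=w_{2^{i+1}+1}$ in Lemma 7.6).

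Part (2) is where there is a genuine gap, and it is not merely the bookkeeping you postpone: the cancellation you hope for fails in general. Carrying out your own plan with $r=2^{i+1}+j-1>N$, the truncated Newton identity gives, modulo $W^3$, $p_r\equiv r\sum w_aw_b$ (sum over unordered pairs $a+b=r$, $2\le a<b\le N$, plus a possible diagonal term), while the $j'\ge 2$ summands contribute $(r-j')\,w_{j'}w_{r-j'}$ for $2\le j'\le j-1$, $r-j'\le N$. Collecting coefficients, the pair $w_aw_b$ ($a<b$, $a+b=r$, $a\ge r-N$) survives with coefficient $a\bmod 2$ when $a\le j-1<b$, with coefficient $(j-1)\bmod 2$ when $a\ge j$, and cancels when $b\le j-1$. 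Hence extra pairs generally survive, and for odd $j$ the claimed pair itself can disappear: for instance $Q_2(w_8)\equiv w_7w_8+w_5w_{10}$ in $H^*(BSO(10);\bZ/2)$, while in $H^*(BSO(7);\bZ/2)$ one gets $Q_2(w_4)\equiv w_4w_7+w_5w_6$ and $Q_2(w_5)\equiv 0$, all modulo $W^3$ (the first follows from $p_{15}\equiv w_5w_{10}+w_6w_9+w_7w_8$ together with the $j'=6$ term $w_6p_9\equiv w_6w_9$). Your proposed test case $i=1$, $j=4$, $N=6$ is a boundary case in which only the pairs $\{3,4\}$ and $\{2,5\}$ fit below $N$ and the spurious one has even smaller index, so it cancels; that is exactly why this check would not reveal the problem. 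So statement (2) as printed is only valid under additional hypotheses (tight ranges of $N$ relative to $j$ and $2^{i+1}$, as in your test case and in the situation the paper actually invokes), and no parity argument of the kind you outline can isolate the single pair $(j,2^{i+1}-1)$ in general; a correct write-up must either restrict the hypotheses or record the additional surviving terms $w_aw_{r-a}$ described above.
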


\begin{lemma}
Let $2^i<2\ell+1$.  Then we can take $s_{i-1}=w_{2^{i}}\
mod(W^2)$.  The element 
$s_{i-1}$ is not in the image of the cycle map  
from the  Chow ring.
\end{lemma}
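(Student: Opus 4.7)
The first assertion follows from a direct Steenrod calculation modulo $W^2$. Since $w_1=0$ in $H^*(BSO(n);\bZ/2)$, the Wu formula gives $Q_0(w_{2^i})=Sq^1(w_{2^i})=w_{2^i+1}$, while Lemma 7.4(1) gives $Q_{i-1}(w_2)\equiv w_{2^i+1}\pmod{W^2}$. Hence $w_{2^i}$ already satisfies the defining congruence
\[ Q_0(s_{i-1})\equiv Q_{i-1}(w_2)\pmod{(s_1,\dots,s_{i-2})} \]
modulo $W^2$, so we may set $s_{i-1}\equiv w_{2^i}\pmod{W^2}$.

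For the second assertion I would invoke the standard consequence of Totaro's modified cycle map $\bar{cl}\colon CH^*(BG)\to BP^*(BG)\otimes_{BP^*}\bZ_{(2)}$, namely that every element in the image of the mod $2$ cycle map is annihilated by $Q_j$ for all $j\ge 1$ (because $Q_j$ corresponds to multiplication by $v_j$ in $BP$-theory, which is killed after setting $v_j\mapsto 0$). It therefore suffices to exhibit a single $j\ge 1$ with $Q_j(s_{i-1})\ne 0$ in the quotient $\bZ/2[w_2,\dots,w_n]/J\otimes\bZ/2[e]$. I would take $j=2$: since $Q_2$ is a derivation preserving $W^2$,
\[ Q_2(s_{i-1})\equiv Q_2(w_{2^i})\pmod{W^2}, \]
and Lemma 7.4 yields either $w_{2^i+7}$ modulo $W^2$ (when $2^i+7\le n$) or $w_{2^i}\!\cdot\! w_7$ modulo $W^3$ (in the overflow case). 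In the former case, $2^i+7=2^k+1$ would force $2^k-2^i=6$, which has no solution with $i\ge 2$, so $w_{2^i+7}\notin J$. In the latter case, neither of the indices $2^i$ (for $i\ge 2$) and $7$ belongs to the set of generator indices $\{2,3,5,9,\dots,2^{h-1}+1\}$ of $J$ modulo $W^2$, so the monomial $w_{2^i}w_7$ survives in $(W^2/W^3)/J$. Either way $Q_2(s_{i-1})\ne 0$, contradicting the $BP$-theoretic constraint on cycle map images.

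The main obstacle is the combinatorial bookkeeping needed to verify that $Q_j(s_{i-1})$ really escapes the Spin-relation ideal $J$ in every case permitted by the hypothesis $2^i<2\ell+1$. Taking $j=2$ is what makes the argument clean: the degree shift $2^{j+1}-1=7$ lands on an index that is never among the $J$-generator indices $\{2^k+1\}$, while being small enough to keep Inoue's formulas applicable even in the edge case where $n$ is only slightly larger than $2^i$.
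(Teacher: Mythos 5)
Your proof is correct and follows essentially the same route as the paper: the identification $s_{i-1}\equiv w_{2^i}\ mod(W^2)$ via Inoue's formula and $Q_0$, and non-liftability because images of the mod $2$ cycle map are killed by $Q_j$ for $j\ge 1$, combined again with Inoue's lemma to exhibit a nonzero $Q_j(s_{i-1})$ modulo $J$. The only difference is the choice of operation: you use $Q_2$ uniformly (its shift $7$ avoids the $J$-generator indices $2^k+1$ in both the generic and overflow cases), whereas the paper uses $Q_1$ generically and must switch to the top Milnor operation in the edge case $2^i=2\ell$ precisely because the $Q_1$-overflow term $w_{2^i}w_3$ lies in $J$.
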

\begin{proof} 
By Inoue's lemma,
\[ Q_0(s_i)=Q_i(w_2)=w_{2^{i+1}+1}\ mod(W^2).\]
Hence $s_i=w_{2^{i+1}}$ $mod(W^2)$.

Since $Q_i(x)=0$ for each class $x$ in the 
$mod(2)$ Chow ring, the second statements follows from
\[Q_1(w_{2^{i+1}})=w_{2^{i+1}+3}\not \in J\ mod(W^2)
\quad
when \ 2^i<2\ell-1.\]
  For $2^i=2\ell$, we have $
Q_{i}(w_{2^{i+1}})=w_{2^{i+1}-1}w_{2^{i+1}}\not \in J\ mod(W^3).$
\end{proof}

In our (real)  case,
it is known \cite{Qu} that each maximal elementary abelian $2$-group $A$ has 
$rank_2A=h+1$ and
$ e|A=\Pi _{x\in H^1(B\bar A;\bZ/2)}(z+x)
$.  Here  we identify 
$A\cong C\oplus \bar A$ and
\[ H^*(BC;\bZ/2)\cong \bZ/2[z], \quad H^*(B\bar A;\bZ/2)\cong \bZ/2[x_1,...,x_h].\] 
The Dickson algebra is written as a polynomial algebra
\[ \bZ/2[x_1,....,x_h]^{GL_h(\bZ/2)}
\cong \bZ/2[d_0,....,d_{h-1}].\]
where $d_i$ is defined as
$ e|A=z^{2^h}+d_{h-1}z^{2^{h-1}}+...+d_0z.$
We can  also identify $d_i=w_{2^h-2^i}(\Delta)
\in H^*(BG;\bZ/2)$ \cite{Qu}.

\begin{lemma} (Corollary 2.1 in \cite{Sc-Ya})
We have
\[Q_{h-1}e=d_0e\quad  and\quad Q_ke=0\ \  for\ 0\le k\le h-2.\]
\end{lemma}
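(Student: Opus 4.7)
The plan is to verify both identities after restricting to a maximal elementary abelian $2$-subgroup $A = C \oplus \bar A$ of $Spin(n)$, using the explicit formula $e|A = \prod_{x \in V}(z+x)$ with $V = H^1(B\bar A;\bZ/2) \cong (\bZ/2)^h$, and then lifting back to $H^*(BG;\bZ/2)$ via Quillen's detection theorem. Recall that $Q_k$ acts as a derivation on $H^*(BA;\bZ/2) = \bZ/2[z,x_1,\dots,x_h]$ with $Q_k(y) = y^{2^{k+1}}$ on every degree-one class $y$.

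Applying the derivation rule to $f_V(z) := e|A = \prod_{x \in V}(z+x)$ yields $Q_k(f_V(z)) = \sum_{v \in V}(z+v)^{2^{k+1}} \prod_{x \neq v}(z+x)$. Since the substitution $z \mapsto z+v$ merely permutes the factors of $f_V$, one has $f_V(z+v) = f_V(z)$, so each summand rewrites as $(z+v)^{2^{k+1}-1} \cdot f_V(z)$. The computation collapses to
\[
Q_k(e|A) \;=\; (e|A) \cdot S_k, \qquad S_k := \sum_{v \in V}(z+v)^{2^{k+1}-1}.
\]
Expanding $(z+v)^{2^{k+1}-1} = \sum_i z^i v^{2^{k+1}-1-i}$ in characteristic $2$ (each binomial $\binom{2^{k+1}-1}{i}$ is odd) and swapping sums gives $S_k = \sum_i z^i T_{2^{k+1}-1-i}$ with $T_j := \sum_{v \in V} v^j$.

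Writing $v = \sum_i a_i x_i$ with $a_i \in \bZ/2$ and expanding $v^j$ multinomially, a monomial $\prod_i x_i^{n_i}$ with $\sum n_i = j$ contributes the coefficient $\sum_{v \in V}\prod_i a_i^{n_i} = 2^{h-s}$, where $s$ is the number of positive $n_i$; this vanishes mod $2$ unless $s = h$. Among the $h$-part compositions, Lucas' theorem identifies the odd multinomials $\binom{j}{n_1,\dots,n_h}$ with those $(n_i)$ whose binary supports partition that of $j$, which forces $j$ to have binary weight at least $h$. For $0 \le j \le 2^h-1$ this singles out $j = 2^h-1$, and summing the $h!$ surviving monomials reproduces Dickson's Moore-determinant expression $\prod_{v \neq 0} v = d_0$. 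Hence $T_j = 0$ for $0 \le j < 2^h-1$ and $T_{2^h-1} = d_0$, so $S_k = 0$ for $k \le h-2$ while $S_{h-1} = d_0$. This gives $(Q_k e)|A = 0$ and $(Q_{h-1}e)|A = (d_0 e)|A$.

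To promote these equalities to $H^*(BG;\bZ/2)$, invoke Quillen's detection theorem in the real case: restriction to the maximal elementary abelian $2$-subgroups is injective on the polynomial-type factor $\bZ/2[w_2,\dots,w_n]/J \otimes \bZ/2[e]$ and, in particular, on the elements $Q_k e$ and $d_0 e$. The main obstacle is the power-sum evaluation; the vanishing $T_j = 0$ for $j < h$ is immediate, but the intermediate range $h \le j < 2^h-1$ requires the binary-weight argument via Lucas, and the identification $T_{2^h-1} = d_0$ relies on the Moore-determinant presentation of the bottom Dickson invariant in characteristic $2$.
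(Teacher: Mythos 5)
Your argument is correct, and since the paper offers no proof of its own here (it simply quotes Corollary 2.1 of Schuster--Yagita), your restriction computation is essentially the standard argument underlying that citation rather than a genuinely new route. The core steps all check out: $Q_k$ is a derivation with $Q_k(y)=y^{2^{k+1}}$ on one-dimensional classes, so $Q_k(e|A)=(e|A)\cdot\sum_{v\in V}(z+v)^{2^{k+1}-1}$; the power-sum evaluation $T_j=\sum_{v\in V}v^j=0$ for $0\le j<2^h-1$ and $T_{2^h-1}=d_0$ via Lucas and the Moore determinant is right; and the identification $(d_0e)|A=d_0\cdot(e|A)$, which you use implicitly, does hold because $w_{2^h-1}(\Delta)|A$ is the $z$-derivative of $\prod_{x\in V}(z+x)=z^{2^h}+d_{h-1}z^{2^{h-1}}+\cdots+d_0z$, whose only odd-exponent term is $d_0z$. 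The one step to state with care is the lifting from $H^*(BA;\bZ/2)$ back to $H^*(BG;\bZ/2)$: injectivity of restriction to the maximal elementary abelian $2$-subgroups is precisely where the standing hypothesis of this section enters (the real case $n=8\ell\pm1$, where all maximal elementary abelians have rank $h+1$ and detection holds by Quillen); outside the real case your argument only yields the identities after restriction, so the appeal to detection should be cited in that form rather than as a general fact about $Spin(n)$. With that reference made precise, the proof is complete and agrees with the approach of the cited source.
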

Thus we know that $e=\eta_{\ell-1}$ is not in the image from $CH^*(BG)$.
Let us consider 
$i^*/2:CH^*(BG)\to CH^*(BT)/2$
(but not to $CH^*(BT)^W/2$).
\begin{conj}  Let $G=Spin(2\ell+1)$ be of real type.
\[  Im(i^*/2(CH^*(BG_k))\cong 
 D_{\ell}/2=\bZ/2[c_4,c_6,..., c_{2\ell},
 c_{2^h}] \subset H^*(BT)/2.\]
\end{conj}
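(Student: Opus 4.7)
The plan is to prove both inclusions $D_{\ell}/2\subseteq \Img(i^*/2)$ and $\Img(i^*/2)\subseteq D_{\ell}/2$ in $H^*(BT)/2$.

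For the easy inclusion $D_{\ell}/2\subseteq \Img(i^*/2)$, I would exhibit each generator as the mod-$2$ reduction of an algebraic Chern class.  For $2\le i\le \ell$, the complexified standard representation $\rho\colon Spin_k(2\ell+1)\to SO_k(2\ell+1)\hookrightarrow GL_{2\ell+1}$ is algebraic, so $c_{2i}(\rho)\in CH^{2i}(BG_k)$; its image in $S(t)$ under $i^*$ is $(-1)^i p_i=e_i(t_1^2,\dots,t_\ell^2)$, reducing mod $2$ to $c_i^2=w_{2i}^2$, i.e.\ to $c_{2i}\in D_{\ell}$.  For the generator $c_{2^h}=e^2$, the complexified spin representation $\Delta_{\bC}$ is genuinely algebraic (we are in the real case), so its top Chern class $c_{2^h}(\Delta_{\bC})\in CH^{2^h}(BG_k)$ restricts under $i^*$ to $\eta_{\ell-1}^2$, giving $c_{2^h}$.

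For the opposite inclusion, I would use Lemma~4.1 to factor $i^*$ through $CH^*(BG_k)/Tor\hookrightarrow H^*(BT)^W$, identifying the image with a subring of $H^*(BG)/Tor$.  By Corollary~7.3, modulo $2$ we have a free $D_{\ell}/2$-module decomposition
\[ H^*(BG)/Tor\otimes \bZ/2 \;\cong\; (D_{\ell}/2)\otimes \Lambda_{\bZ/2}(s_3,\dots,s_{\ell},e),\]
so I must rule out contributions from the exterior part.  The obstruction is that mod-$2$ cycle classes lie in $\bigcap_n \ker Q_n$, which follows from the factorisation of $\bar{cl}$ through $BP^*(BG)\otimes_{BP^*}\bZ_{(2)}$ (where each $v_n$ acts trivially).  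Every element of $D_{\ell}/2$ is a polynomial in the squares $w_{2i}^2$ and $w_{2^h}^2$, hence itself a square; since $Q_n$ is a derivation $Q_n(z^2)=2zQ_n(z)=0$ in characteristic~$2$, so indeed $D_{\ell}/2\subseteq \bigcap_n \ker Q_n$.  Conversely, Lemmas~7.5 and~7.6 give $Q_1 s_i\equiv w_{2^{i+1}+3}\pmod{W^2}\neq 0$ and $Q_{h-1}e=d_0 e\neq 0$; by a Leibniz expansion, for any nontrivial monomial $d\cdot s_I e^{\varepsilon}$ with $d\in D_{\ell}/2$ and $I\neq \emptyset$ or $\varepsilon=1$, some $Q_n$ produces a nonzero class in an appropriate filtration quotient of $H^*(BG;\bZ/2)$, forcing the monomial out of $\Img(i^*/2)$.

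The main obstacle lies in the Leibniz bookkeeping of the last step.  Because $Q_n$ differentiates each $s_j$-factor separately, one must rule out cancellations among the resulting terms in $H^*(BG;\bZ/2)$; this requires refining Inoue's formula (Lemma~7.4) to higher filtration degree, together with a description of $Q_n e$ for all $n$ rather than only $n=h-1$ as in Lemma~7.6.  An alternative route is to prove Totaro's conjecture $\bar{cl}\colon CH^*(BG_k)/Tor \isoarr (BP^*(BG)\otimes_{BP^*}\bZ_{(2)})/Tor$ for $G=Spin(2\ell+1)$ and then read $\Img(i^*/2)$ off the Atiyah--Hirzebruch spectral sequence, but verifying Totaro's conjecture for $BSpin(2\ell+1)$ is at least as demanding as the present conjecture.
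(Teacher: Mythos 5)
You should first be aware that the statement you were asked to prove is stated in the paper as a \emph{conjecture} (Conjecture 7.7): the paper itself gives no general proof, but only verifies it for $Spin(7)$ and $Spin(9)$ (Sections 9 and 10, via Guillot's computation of $CH^*(BSpin(7))$ and the Kono--Yagita $BP$-computation for $Spin(9)$ together with the Chern-class representability of $2\Lambda_{\bZ}(w_4,w_8,w_{16})$), plus a weak limiting statement for $Spin(\infty)$. Your first inclusion $D_{\ell}/2\subseteq \Img(i^*/2)$ is fine and is essentially the paper's Theorem 7.8 and Lemma 7.9 (the Chern subring $RQ(n)$ generated by $c_{2i}$ and $c_{2^h}(\Delta_{\bC})$, defined already over $k$ by the Chevalley/$K$-theory argument of Corollary 4.3). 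Your obstruction principle for the reverse inclusion is also the one the paper uses locally: classes in the image of the mod~$2$ cycle map are killed by all $Q_n$ (the paper deduces this from $CH^*/2\cong H^{2*,*}$ and weight reasons, which is cleaner than your appeal to $\bar{cl}$ and "$v_n$ acts trivially"), and Lemmas 7.5 and 7.6 use exactly this to exclude the individual generators $s_i$ and $e$.

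The genuine gap is the step you yourself flag as "Leibniz bookkeeping", and it is not a routine verification but precisely the open content of the conjecture. Excluding the generators $s_i$ and $e$ does not exclude a general element of $\Img(cl)$ whose expansion in the Benson--Wood basis is a sum $\sum_I d_I\, s_{I}e^{\epsilon_I}$ with $d_I\in D_{\ell}$: the $Q_n$ must be evaluated on such sums inside $H^*(BSpin(2\ell+1);\bZ/2)$, where (i) the available formulas (Inoue's Lemma 7.4, Nishimoto's formulas, Lemma 7.6) are only given modulo $W^2$ or $W^3$ and only $Q_{h-1}e$ is computed, (ii) cross-term cancellations between different monomials, and corrections by the torsion part $D\otimes\bZ/2[w_7]^+$-type classes (which affect the mod~$2$ reduction but not the class mod $Tor$), must be ruled out, and (iii) $Q_n$-vanishing is only a necessary condition for membership in $\Img(cl)$, so even a complete $Q_n$-analysis bounds the image only from above and must be matched against the subring $RQ$. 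None of this is supplied, and the paper's actual verifications for $Spin(7)$ and $Spin(9)$ avoid this route entirely, computing $BP^*(BG)\otimes_{BP^*}\bZ_{(2)}$ and $CH^*(BG_k)/Tor$ explicitly and reading off $i^*$ on Chern classes of the spin representation. So your proposal reproves the easy half and correctly locates the difficulty, but it does not close the conjecture; as written it would be a proof sketch conditional on a complete determination of the $Q_n$-action (or of Totaro's conjecture for $BSpin$), which is exactly what is missing.
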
 
We will see that the above conjecture is true
when
$G=Spin(7),Spin(9)$,  and some weaker version for $Spin(\infty)$.

We consider the motivic cohomology so that
\[ CH^*(X)/2\cong H^{2*,*}(X;\bZ/2).\]
The degree is given $deg(w_i)=(i,i)$ and $deg(c_i)=(2i,i)$.
The cohomology operation $Q_i$ exists in the motivic cohomology with $deg(Q_i)=(2^{i+1}-1, 2^i-1)$. Hence 
\[ Q_iQ_0(w_2)\in H^{2*,*}(BG_k;\bZ/2)\cong CH^*(BG_k)/2.\]
Using these facts, we can see
\begin{thm}  (\cite{YaCo}) 
The ring $CH^*(BSpin(n)_k)/2$
has a subring
\[ RQ(n)=\bZ/2[c_2,...,c_{n}]/(Q_1Q_0w_2,...,Q_{n-1}Q_0w_n)
\otimes \bZ/2[c_{2^h}(\Delta_{\bC})]\]
where $c_i$ is the Chern class for $Spin(n)\to SO(n)\to 
U(n)$ and $c_{2^h}(\Delta_{\bC})$ is that of complex representation for $\Delta$. 
\end{thm}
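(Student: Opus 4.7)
\medskip

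\noindent\textbf{Proof plan.} The idea is to realize every generator of $RQ(n)$ by an honest Chow class coming from a complex representation, then verify that the displayed relations are forced in $CH^*(BSpin(n)_k)/2$, and finally check that the resulting map from $RQ(n)$ is an injection by comparison with singular cohomology.

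\medskip

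\noindent\emph{Step 1 (generators).}  The composition $Spin(n)\to SO(n)\to U(n)$ is a complex representation, so it produces Chern classes $c_2,\dots,c_n\in CH^*(BSpin(n)_k)$ (with $c_1=0$, since the image lies in $SU(n)$).  Likewise the complex spin representation $\Delta_\bC\colon Spin(n)\to U(2^h)$ contributes $c_{2^h}(\Delta_\bC)\in CH^{2^h}(BSpin(n)_k)$.  Reducing mod $2$ provides a ring map
\[
\varphi\colon \bZ/2[c_2,\dots,c_n]\otimes\bZ/2[c_{2^h}(\Delta_\bC)]\ \longrightarrow\ CH^*(BSpin(n)_k)/2.
\]

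\medskip

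\noindent\emph{Step 2 (verifying the relations).}  Here I work in motivic cohomology using $CH^*(X)/2\cong H^{2*,*}(X;\bZ/2)$, together with the motivic Milnor operations $Q_i$ of bidegree $(2^{i+1}-1,2^i-1)$.  Under the realization $CH^*/2\to H^*(-;\bZ/2)$, each $c_j$ maps to $w_j^{\,2}$ in $H^*(BSpin(n);\bZ/2)$.  Using Inoue's lemma (Lemma 7.4), $Q_0w_j=w_{j+1}\bmod (W^2)$ for $j$ even, and each $Q_{i-1}Q_0w_i$ can be written as a polynomial in the $w_j$'s.  The task is to show that, modulo the Quillen ideal $J=(w_2,Q_0w_2,\dots,Q_{h-2}w_2)$, this polynomial can be reorganized so that every monomial becomes a polynomial in the squares $w_j^{\,2}$ (equivalently, in $c_j$) together with the top class $w_{2^h}(\Delta)^2$ (equivalently, $c_{2^h}(\Delta_\bC)$).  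The vanishing of $w_2$ in $H^*(BSpin;\bZ/2)$ and compatibility of $Q_i$ with the cycle class map then force each $Q_{i-1}Q_0w_i$, now read as a polynomial in $c_j$ and $c_{2^h}(\Delta_\bC)$, to lie in the kernel of $\varphi$.  This gives the required factorization $\bar\varphi\colon RQ(n)\to CH^*(BSpin(n)_k)/2$.

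\medskip

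\noindent\emph{Step 3 (subring structure).}  To show $\bar\varphi$ is injective, so that $RQ(n)$ really sits as a subring, compose with the cycle class map to $H^*(BSpin(n);\bZ/2)$ and use Quillen's description of $H^*(BSpin(n);\bZ/2)\cong \bZ/2[w_2,\dots,w_n]/J\otimes\bZ/2[e]$: the images $w_j^{\,2}$ of the $c_j$ and $e^{\,2}$ of $c_{2^h}(\Delta_\bC)$ are algebraically independent precisely modulo the relations obtained by squaring the Quillen relations, which match the images of the $Q_{i-1}Q_0w_i$.  Hence the kernel of $cl\circ\bar\varphi$ is exactly what we quotiented by, and injectivity of $\bar\varphi$ follows.

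\medskip

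\noindent\emph{Main obstacle.}  The hard part is Step 2: the relations $Q_{i-1}Q_0w_i$ are defined in terms of $w_j$'s, not $c_j$'s, and the cycle class map from $CH^*/2$ to $H^*(-;\bZ/2)$ has a kernel in general, so one cannot simply read off the relations from singular cohomology.  What saves us is that, modulo the Quillen ideal $J$ and once one substitutes $Q_0w_j=w_{j+1}$, Inoue's lemma forces each $Q_{i-1}Q_0w_i$ to be expressible entirely in terms of even-degree monomials in the $w_j$'s, allowing the rewriting $w_j^{\,2}\leftrightarrow c_j\bmod 2$ and $w_{2^h}(\Delta)^2\leftrightarrow c_{2^h}(\Delta_\bC)\bmod 2$.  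Carrying out this bookkeeping carefully—especially keeping track of where the spin class $c_{2^h}(\Delta_\bC)$ enters via Lemma 7.6—is the technical heart of the argument.
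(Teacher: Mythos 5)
Your proposal sets out to reprove the whole theorem, but the paper does not do that: its proof consists of citing \cite{YaCo}, where the subring $RQ(n)\subset CH^*(BSpin(n)_{\bar k})/2$ is established over $\bar k$, and the only new content is the passage from $\bar k$ to an arbitrary $k$ of characteristic $0$, done via $K^0(BG_k)\cong K^0(BG_{\bar k})$ so that Chern classes over $\bar k$ lift to $CH^*(BG_k)$ (Corollary 4.3). You never address the base field at all: your Steps 2--3 use the cycle class map to $H^*(BSpin(n);\bZ/2)$, which presupposes $k\subset\bC$ (or a comparison through $\bar k$), so even if your computation were complete over $\bC$ you would still owe exactly the descent argument that constitutes the paper's proof.

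The more serious gap is Step 2. To show that the elements $Q_{i-1}Q_0w_i$, rewritten in the $c_j$ and $c_{2^h}(\Delta_{\bC})$, lie in $\Ker(\varphi)$, you appeal to the vanishing of $w_2$ in $H^*(BSpin(n);\bZ/2)$ and to compatibility of the $Q_i$ with the cycle class map; but vanishing of a class in singular cohomology never forces its vanishing in $CH^*/2$, because the mod $2$ cycle map has a kernel --- a point you concede yourself in the ``main obstacle'' paragraph, which makes the argument circular. The mechanism actually needed (and this is what the paper's preamble with $deg(w_i)=(i,i)$, $deg(Q_i)=(2^{i+1}-1,2^i-1)$ is signalling, and what \cite{YaCo} carries out) is motivic: one must realize $w_2$ (and the other $w_j$) as motivic cohomology classes of $BSO(n)_k$ resp.\ $BSpin(n)_k$, check that $Q_iQ_0$ applied to them lands on the Chow diagonal $H^{2*,*}$, identify the resulting Chow classes as explicit polynomials in the Chern classes (including where $c_{2^h}(\Delta_{\bC})$ enters), and conclude vanishing on $BSpin(n)_k$ by naturality, since $w_2$ pulls back to zero there. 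None of this computation appears in your sketch; you label it ``the technical heart'' and leave it undone. Step 3 has the same character: the claim that the kernel of $c_j\mapsto w_j^2$, $c_{2^h}(\Delta_{\bC})\mapsto e^2$ on Quillen's ring is exactly the ideal generated by the images of the same relations is a nontrivial assertion about Quillen's ideal $J$ and is stated, not proved. As it stands the proposal is a plausible outline of the computation done in \cite{YaCo}, not a proof of the theorem.
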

\begin{proof}  This theorem is proved in \cite{YaCo}
for $k=\bar k$.  It is well known $K^*(BG_k)\cong K^*(BG_{\bar k})$.  Hence we see all
Chern classes in $CH^*(BG_{\bar k})$ can be extended
for $CH^*(BG_k)$.  (see Corollary 4.3.)
\end{proof}

\begin{lemma} Let $m=2\ell+1$ and $G$ be real type.  Then we have the 
isomorphism
\[ i^*(RQ(m))\cong D_{\ell}=\bZ_{(2)}[c_4,c_6,...,c_{2\ell},c_{2^{h}}(\Delta_{\bC})].\]\end{lemma}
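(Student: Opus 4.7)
The strategy is to compute $i^*$ directly on the two families of generators of $RQ(m)$: the Chern classes $c_i$ of the natural representation $V\colon Spin(m) \to SO(m) \hookrightarrow U(m)$, and the top Chern class $c_{2^h}(\Delta_\bC)$ of the complex spin representation.  Since $i^*\colon CH^*(BG_k)/2 \to S(t)/2$ is the restriction to the maximal torus, on any Chern class $c_i(V')$ it returns the $i$-th elementary symmetric function in the weights of $V'|_T$.

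For the natural representation the weights on $T$ are $\pm t_1, \ldots, \pm t_\ell, 0$, so
\[ i^*(c(V)) = \prod_{j=1}^\ell (1 - t_j^2) \in S(t), \]
yielding $i^*(c_{2i}) = (-1)^i p_i$ (the $i$-th Pontryagin class) and $i^*(c_{2i+1}) = 0$.  For the complex spin representation the weights on $T$ are the $2^\ell$ half-sums $\tfrac{1}{2}(\epsilon_1 t_1 + \cdots + \epsilon_\ell t_\ell)$ with $\epsilon_j \in \{\pm 1\}$, so $i^*(c_{2^h}(\Delta_\bC))$ equals the product of these weights.  Using the Benson--Wood formula $\eta_{\ell-1} = \prod_{I \subseteq \{2,\ldots,\ell\}}(q_1 - \sum_{i \in I} x_i)$, this product matches $\eta_{\ell-1}^2 \in S(t)^W$, consistent with the identity $\eta_{\ell-1}^2 = \rho^*(c_{2^h}(\Delta_\bC))$ from Theorem 7.2(2).

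Combining the two calculations, the image of $i^*$ on $RQ(m)$ is generated modulo $2$ by $p_1, p_2, \ldots, p_\ell$ and $\eta_{\ell-1}^2$.  The crucial point is that Benson--Wood's invariant $q_1 = p_1/2$ lies integrally in $H^*(BT)^W \subset S(t)$, so $p_1 = 2q_1 \in 2\cdot S(t)$ and hence $i^*(c_2) = -p_1 \equiv 0 \pmod 2$ in $S(t)/2$.  The surviving image is therefore $\bZ/2[p_2, \ldots, p_\ell, \eta_{\ell-1}^2]$, which under the correspondence $c_{2i} \leftrightarrow p_i$ and $c_{2^h}(\Delta_\bC) \leftrightarrow \eta_{\ell-1}^2$ of Corollary 7.3 is exactly $D_\ell/2$.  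The main obstacle will be to justify the two non-trivial identifications carefully: the integral divisibility $p_1 \in 2\cdot S(t)$ (reflecting $w_2 = 0$ on $BSpin$, made explicit by rewriting $p_1 = \sum e_j^2$ in coordinates adapted to the fundamental spin weight $\omega = \tfrac{1}{2}\sum e_j$), and the expansion of the spin-class weight product into $\eta_{\ell-1}^2$ via the Benson--Wood formula.
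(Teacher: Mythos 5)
Your argument is correct, and it reaches the lemma by a different mechanism than the paper. The paper's own proof is very short: it kills the odd Chern classes by exhibiting them as Bockstein images, $c_{2i+1}=w_{2i+1}^2=Q_0(w_{2i}w_{2i+1})$, so they are $2$-torsion in $CH^*(BSpin(m))$ and hence contribute nothing to the image in the torsion-free $CH^*(BT)$ (equivalently vanish mod $2$); everything else — the fate of $c_2$, the images $c_{2i}\mapsto p_i$, $c_{2^h}(\Delta_{\bC})\mapsto \eta_{\ell-1}^2$, and the identification of the image with $D_{\ell}$ — is left implicit, resting on Benson–Wood (Theorem 7.2, Corollary 7.3). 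You instead restrict representations to the torus and read off weights: $i^*c(V)=\prod_j(1-t_j^2)$ kills the odd Chern classes on the nose and gives $c_{2i}\mapsto \pm p_i$, and Theorem 7.2(2) identifies $i^*c_{2^h}(\Delta_{\bC})$ with $\eta_{\ell-1}^2$. Your route has the advantage of making explicit the step the paper's proof never mentions but which the statement needs, namely $i^*(c_2)=\pm p_1=\pm 2q_1\equiv 0$ mod $2$ (this also shows the lemma must be read as a statement about $i^*/2$, as in Conjecture 6.7 and Corollary 8.6 — integrally the image contains $2q_1$, which is not in $D_{\ell}$ — and your mod-$2$ reading is the right one). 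The paper's $Q_0$-torsion argument, on the other hand, is what generalizes to classes not visibly tied to a single representation. One small point you should add a line for: to conclude that the image is literally the polynomial ring $D_{\ell}/2$ (rather than just the subring generated by $p_2,\dots,p_{\ell},\eta_{\ell-1}^2$), you need these elements to remain algebraically independent over $\bZ/2$; integral independence from Corollary 7.3 does not formally imply this. It does hold — e.g. mod $2$ the $p_i$ involve only the $t_j$, while $\eta_{\ell-1}$ is monic of degree $2^{\ell-1}$ in the spin weight $\omega$, so $S(t)/2$ is finite over the subring, which forces independence — and the paper's proof is equally silent on it, so this is a one-sentence patch rather than a flaw in your approach.
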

\begin{proof}
The relation $Q_0Q_jw_2$ exists in $CH^*(BG(m))/2$.
 The element 
\[ c_{2i+1}=w_{2i+1}^2=Q_0(w_{2i})w_{2i+1}=Q_0(w_{2i}w_{2i+1})\]
also exists in $CH^*(BG(m))$ and $2$-torsion.
\end{proof}

\section{$\bG/B_k$ for $G=Spin(n)$}

In this section, let    $G=SO(2\ell+1)$
and $G'=Spin(2\ell+1)$. (These notations differ from the preceding section.)
It is well known that
$ G/T\cong G'/T'$ for the maximal torus $T'$ of the spin group.

  By definition, we have the 
$2$ covering $\pi:G'\to G$.
It is well known that 
$\pi^*:   H^*(G/T)\cong H^*(G'/T')$.
Let $2^t\le \ell < 2^{t+1}$, i.e. $t=[log_2\ell]$.
The mod $2$ cohomology is
\[ H^*(G';\bZ/2)\cong H^*(G;\bZ/2)/(x_1,y_2)\otimes
\Lambda(z) \]
\[ \cong P(y)'\otimes \Lambda(x_3,x_5,...,x_{2\ell-1})\otimes \Lambda(z),\quad |z|=2^{t+2}-1\]
where
$P(y)\cong \bZ/2[y_2]/(y_2^{2^{t+1}})\otimes P(y)'$.  
Hence
\[ grP(y)'\cong \otimes _{2i\not =2^j}\Lambda(y_{2i})\cong
\Lambda(y_6,y_{10},y_{12},...,y_{2\bar \ell})\]
where $\bar \ell=\ell-1$ if $\ell=2^j$ for some $j$, and
$\bar \ell=\ell$ otherwise.

The $Q_i$ operation for $z$ is given by Nishimoto 
\cite{Ni}
\[ Q_0(z)=\sum _{i+j=2^{t+1},i<j}y_{2i}y_{2j}, \quad 
 Q_n(z)=\sum _{i+j=2^{t+1}+2^{n+1}-2,i<j}y_{2i}y_{2j}\ \ for\ n\ge 1.\]

We know that 
\[ grH^*(G'/T')/2\cong P(y)'\otimes S(t')/(2,c_2',.....,c_{\ell}',c_1^{2^{t+1}}).\]
Here $c_i'=\pi^*(c_i)$ and $d_{2^{t+2}}(z)=c_1^{2^{t+1}}$ in the spectral sequence
converging $H^*(G'/T')$.
Take $k$ such that $\bG$ is a versal $G_k$-torsor so that
$\bG'_k$ is also a versal $G_k'$-torsor.  Let us write
$\bF=\bG/B_k$ and $\bF'=\bG'/B_k'$.  Then
$ CH^*(\bar R(\bG'))/2\cong P(y)'/2$.

The Chow ring $CH^*(R(\bG'))/2$ is not computed yet
(for general $\ell$),
while we have the following lemmas.



\begin{lemma}  Let $G=Spin(2\ell+1)$ and 
$2^t\le \ell<2^{t+1}$.  Then
there is a surjection
\[ \Lambda(c_2',...,c_{\bar \ell}')\otimes
\bZ/2[e_{2^{t+1}}]\to
CH^*(R(\bG))/2.\]
where $c_i'=\pi^*(c_i)$ and $e_j=c_1^j$ in $S(t)
\cong H^*(BT)$ for $\pi:Spin(m)\to SO(m)$. 
\end{lemma}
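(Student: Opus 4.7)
The plan is to combine the general framework from Section 2 with the spectral sequence data recorded just above the lemma. Since $\bG$ is versal, the argument around Lemma 2.2 (applied to the non-type-$(I)$ spin group) yields a surjection
\[ A(b) := \bZ/2[c_2', \ldots, c_\ell', e_{2^{t+1}}] \thrarr CH^*(R(\bG))/2, \]
whose generators are exactly the transgressions in the spectral sequence (2.1): $c_i' = \pi^*(c_i) = d_{2i}(x_{2i-1})$ for $i = 2, \ldots, \ell$, together with $e_{2^{t+1}} = c_1^{2^{t+1}} = d_{2^{t+2}}(z)$. These are read off from the displayed isomorphism $grH^*(G'/T')/2 \cong P(y)' \otimes S(t')/(2, c_2', \ldots, c_\ell', c_1^{2^{t+1}})$ stated just before the lemma. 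To conclude the proof, I need two things: drop $c_\ell'$ when $\ell = 2^j$, and install the exterior relations $(c_i')^2 = 0$ for $2 \le i \le \bar\ell$.

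For the first point, if $\ell = 2^j$ then $\bar\ell = \ell - 1$ and the class $y_{2\ell} = y_{2^{j+1}}$ is absent from $P(y)'$ by construction. Consequently, the image of $c_\ell'$ in $CH^*(\bar R(\bG))/2 \cong P(y)'/2$ lies in the subalgebra generated by the remaining transgressions, so removing $c_\ell'$ from $A(b)$ preserves surjectivity and trims it to $\bZ/2[c_2', \ldots, c_{\bar\ell}', e_{2^{t+1}}]$. For the second point, Lemma 2.1 identifies $c_i'$ modulo higher filtration with $y_{2i}$, and in the associated graded $grP(y)' \cong \Lambda(y_6, y_{10}, y_{12}, \ldots, y_{2\bar\ell})$ the square $y_{2i}^2$ vanishes; the error term $y_{4i} = y_{2i}^2$ is either again a square of another transgression or lies past $2\bar\ell$, so iterating the reduction through filtration weights eventually exits the allowed range and forces $(c_i')^2 = 0$. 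With these relations installed, the surjection factors through $\Lambda(c_2', \ldots, c_{\bar\ell}') \otimes \bZ/2[e_{2^{t+1}}]$, as required.

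The main obstacle I expect is upgrading the associated-graded vanishing to a genuine vanishing of $(c_i')^2$ in $CH^*(R(\bG))/2$. The ring $P(y)'$ itself carries the nontrivial squaring $y_{2i}^2 = y_{4i}$, and the exterior relations only emerge after the $k$-rational restriction implicit in versality. The cleanest route is probably to combine the Merkurjev--Karpenko surjectivity $CH^*(BB_k)/2 \thrarr CH^*(\bF)/2$ (recalled in $\S 1$) with the Petrov--Semenov--Zainoulline motivic decomposition to realize $CH^*(R(\bG))/2$ directly as the image of the transgression polynomial algebra modulo these exterior relations, rather than grinding through detailed filtration bookkeeping.
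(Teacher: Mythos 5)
Your first step is sound and is exactly what the paper relies on: versality gives the surjection $\bZ/2[c_2',\dots,c_\ell',e_{2^{t+1}}]\to CH^*(R(\bG))/2$ from the transgressions read off the spectral sequence (the statement before Lemma 2.2). The gap is in how you install the relations. Both of your reductions --- killing $(c_i')^2$ and discarding $c_\ell'$ when $\ell=2^j$ --- are checked after restriction to $\bar k$, in $CH^*(\bar R(\bG))/2\cong P(y)'/2$ or its associated graded. That cannot work: for a versal torsor the positive-degree classes in question are $2$-divisible over $\bar k$ (by Lemma 2.1, $c_i'\equiv 2y_{2i}+v_1y_{2i+2}+\cdots$ modulo $\II$; compare the proof of Lemma 8.4, where $res(c_j)\equiv 0$ mod $(v_n)$), so mod $2$ they all restrict to zero. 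Hence ``$y_{2i}^2=0$ in $grP(y)'$'' and ``the image of $c_\ell'$ over $\bar k$ lies in the subalgebra generated by the remaining transgressions'' are vacuously true and give no information about relations in $CH^*(R(\bG))/2$ over $k$; the mod $2$ restriction map is nowhere near injective, so surjectivity of the smaller algebra cannot be tested over $\bar k$. Your closing suggestion is also circular: Merkurjev--Karpenko surjectivity together with the Petrov--Semenov--Zainoulline decomposition is precisely what produces the surjection from the polynomial algebra on the transgressions, and by itself yields no exterior relations.

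The mechanism the paper actually has available (and which you never invoke) is the Chern-class one: $(c_i')^2=i^*(c_{2i})$ for the Chern class $c_{2i}\in CH^*(BG_k)$ of $Spin\to SO\to U$ (Lemma 8.2, quoting Lemma 7.3 of [YaC]), combined with Lemma 3.1, which gives $j^+i^+=0$; hence $(c_i')^2=0$ already in $CH^+(\bF)/2$, so the surjection factors through the exterior algebra. (The same device applied to $c_{2^h}(\Delta_{\bC})$ is what truncates $\bZ/2[e_{2^{t+1}}]$ in Corollary 8.3.) Likewise, dropping $c_\ell'$ for $\ell=2^j$ must be argued over $k$: in that case every Milnor operation $Q_nx_{2\ell-1}$ vanishes in $H^*(Spin(2\ell+1);\bZ/2)$ (one has $y_{2\ell}=y_2^{2^t}$, which dies in $P(y)'$, and the higher $Q_n$-images fall outside the range), so by Lemma 2.1 the transgression $c_\ell'$ lies in $\II\cdot BP^*(\bG/B_k)$ and contributes no new generator of $CH^*(R(\bG))/2$; this is the input from [YaC] the paper is implicitly using, and your $\bar k$-argument does not substitute for it.
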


\begin{lemma}
We have 
\[ i^*(c_{2i})=(c_i')^2,\quad i^*(c_{2i+1})=0,\quad   i^*(c_{2^h}(\Delta_{\bC}))=e_{2^{t+1}}^{2^{h-t-1}}.
\]\end{lemma}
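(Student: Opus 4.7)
The plan is to restrict to the maximal torus $T$ and read off each Chern class from the weights of the relevant representation of $Spin(2\ell+1)$.

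For the first two identities, consider the composition $Spin(2\ell+1)\xrightarrow{\pi}SO(2\ell+1)\hookrightarrow U(2\ell+1)$. Restricted to $T$, the standard representation decomposes into the characters $\{0\}\cup\{\pm t_{i}\}_{i=1}^{\ell}$, so the total Chern class in $H^{*}(BT)$ is
\[
\sum_{k\geq 0}i^{*}(c_{k})=\prod_{i=1}^{\ell}(1+t_{i})(1-t_{i})=\prod_{i=1}^{\ell}(1-t_{i}^{2}).
\]
Reading off coefficients gives $i^{*}(c_{2i+1})=0$ already integrally, and $i^{*}(c_{2i})=(-1)^{i}e_{i}(t_{1}^{2},\ldots,t_{\ell}^{2})$. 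Reducing mod $2$, the Frobenius identity $e_{i}(t_{1}^{2},\ldots,t_{\ell}^{2})\equiv e_{i}(t_{1},\ldots,t_{\ell})^{2}=(c_{i}')^{2}$ finishes these two cases.

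For the third identity, the complex spin representation $\Delta_{\bC}$ of $Spin(2\ell+1)$ has the $2^{\ell}$ weights $\tfrac{1}{2}(\pm t_{1}\pm\cdots\pm t_{\ell})$, each of multiplicity one. Setting $z=\tfrac{1}{2}(t_{1}+\cdots+t_{\ell})$, which is an integral class on $BT_{Spin}$, every weight has the form $z-\sum_{i\in S}t_{i}$ for some $S\subset\{1,\ldots,\ell\}$. Hence the top Chern class restricts to
\[
i^{*}\bigl(c_{2^{h}}(\Delta_{\bC})\bigr)=\prod_{S\subset\{1,\ldots,\ell\}}\Bigl(z-\sum_{i\in S}t_{i}\Bigr).
\]
Working mod $2$ and using the relation $t_{1}+\cdots+t_{\ell}=2z\equiv 0$, the factors indexed by $S$ and its complement $S^{c}$ coincide, so the product is a perfect square $f(z)^{2}$ where $f(T)=\prod_{v\in V}(T+v)$ is the Dickson-type polynomial of the $\bF_{2}$-span $V$ of $\{t_{1},\ldots,t_{\ell}\}$. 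Expanding $f(z)^{2}$, every term other than the leading monomial $z^{2^{\ell}}$ involves the Dickson invariants of $V$, which lie in the ideal generated by the higher elementary symmetric functions $c_{i}'$ ($i\geq 2$); in the target of $i^{*}$ this ideal is killed, and one is left with $z^{2^{\ell}}=c_{1}^{2^{h}}=e_{2^{t+1}}^{2^{h-t-1}}$, using $h=\ell$ and $2^{t+1}\cdot 2^{h-t-1}=2^{h}$.

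The main obstacle is the final step of the third identity: one must carefully verify that every non-leading contribution to $f(z)^{2}$ really does lie in the ideal $(c_{2}',\ldots,c_{\ell}')$ in the relevant target, and that the leading monomial $z^{2^{\ell}}$ matches $c_{1}^{2^{h}}$ under the conventions introduced with Lemma 7.1. The first two identities, by contrast, are immediate from the splitting principle together with Frobenius mod $2$.
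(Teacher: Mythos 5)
Your handling of the first two identities is correct and in fact cleaner than the paper's route: the paper quotes Lemma 7.3 of [YaC] for $i^*(c_{2i})=(c_i')^2$ and gets $i^*(c_{2i+1})=0$ from $P^1c_{2i}=c_{2i+1}$, $P^1((c_i')^2)=0$, whereas you read both off from the weights $0,\pm t_i$ of $Spin\to SO\to U(2\ell+1)$, obtaining the vanishing of the odd classes already integrally and the even ones mod $2$ by Frobenius. No objection there.

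The third identity is where your argument has a genuine gap. First, the target of $i^*$ is $CH^*(BB_k)/2\cong S(t)/2$, not a quotient by $(c_2',\dots,c_\ell')$: nothing is ``killed'' there, so even granting your (unverified) claim that the non-leading terms of $f(z)^2$ lie in that ideal, you would only prove the identity modulo $(c_2',\dots,c_\ell')$, which is strictly weaker than the stated equality in $S(t)$ and not sufficient for its use just below (in $CH^*(R(\bG))/2$ products of the $c_i$'s with powers of $e_{2^{t+1}}$ survive, so correction terms of that shape would change the conclusion). Second, the final identification $z^{2^{\ell}}=c_1^{2^{h}}$ is false in $H^*(BT;\bZ/2)$ for the spin torus: since $z=\tfrac12(t_1+\cdots+t_\ell)$ is itself a character, $c_1=t_1+\cdots+t_\ell=2z$ is divisible by $2$, so $c_1^{2^h}\equiv 0$ mod $2$ while $z^{2^{\ell}}\neq 0$; your product of weights is $f(z)^2=z^{2^{\ell}}+\sum_i d_i^2z^{2^{i+1}}$ (essentially Benson--Wood's $\eta_{\ell-1}^2$), and it cannot be turned into a power of $c_1$ by discarding Dickson terms. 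The paper's own (very terse) proof proceeds differently: it does not expand the weights at all, but uses that $\Delta$ is the spin representation, hence nontrivial on the central $\bZ/2$, together with the identification $e_{2^{t+1}}=c_1^{2^{t+1}}$ fixed earlier via the transgression $d_{2^{t+2}}(z)=c_1^{2^{t+1}}$, the exponent $2^{h-t-1}$ being forced by degree. So the first two equations stand, but for the last one the step you yourself flag as the ``main obstacle'' is exactly where the proof fails as written; it needs the restriction-to-the-center mechanism (or at least correct bookkeeping of the half-weight $z$ versus $c_1$), not the Dickson-invariant reduction.
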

\begin{proof}
The first equation is well known (see Lemma 7.3
in [YaC]), in fact $c_i^2=0$ in $CH^*(\bG)$ is proved
using $CH^*(BG_k)$ for $G=SO(n)$.  The second
equation follows from $P^1c_{2i}=c_{2i+1}$ and $P^1((c_{i}')^2)=0$.
The last equation follows from the fact $\Delta$ is spin representation(which is nonzero in the restriction on $\bZ/2$ (recall $e_{2^{t+1}}=c_1^{2^{t+1}})$.
\end{proof}

Hereafter in this section,   let us 
\[ write\ \ c_i'\ \ by\ \  c_i ,\quad and \quad 
G=Spin(2\ell+1)\]
 as the notations
in the preceding section.
\begin{cor} 
There is a surjection
\[ \Lambda(c_2,...,c_{\bar \ell})\otimes
\bZ/2[e_{2^{t+1}}]/(e_{2^{t+1}}^{2^{h-t-1}})\to
CH^*(R(\bG))/2.\]
\end{cor}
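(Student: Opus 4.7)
The plan is to take the surjection of Lemma 7.1 as the starting point and impose the single extra truncation relation $e_{2^{t+1}}^{2^{h-t-1}}=0$ in $CH^*(R(\bG))/2$. To produce that relation, I would exploit the existence of the Chern class $c_{2^h}(\Delta_{\bC})\in CH^*(BG_k)$ provided by Theorem 6.8, together with the vanishing of the composition $CH^*(BG_k)\to CH^*(BB_k)\to CH^*(\bF)$ in positive degree (Lemma 3.1), and then project further to $CH^*(R(\bG))/2$.

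Concretely, the first step is to recall from Theorem 6.8 that $c_{2^h}(\Delta_{\bC})$ lies in the image of $i^*:CH^*(BG_k)\to CH^*(BB_k)=S(t)$, and from Lemma 7.2 that its image in $S(t)/2$ is exactly $e_{2^{t+1}}^{2^{h-t-1}}$. Applying Lemma 3.1 to $h^*(-)=CH^*(-)/2$, the composition
\[ CH^*(BG_k)/2\xrightarrow{i^*} CH^*(BB_k)/2\xrightarrow{j^*(\bG)} CH^*(\bF)/2 \]
vanishes in positive degrees, so $e_{2^{t+1}}^{2^{h-t-1}}=j^*(\bG)(c_{2^h}(\Delta_{\bC}))=0$ in $CH^*(\bF)/2$. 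Post-composing with the projection $pr:CH^*(\bF)/2\to CH^*(R(\bG))/2$ then forces $e_{2^{t+1}}^{2^{h-t-1}}=0$ inside $CH^*(R(\bG))/2$.

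Combining this new relation with the surjection of Lemma 7.1, the map
\[ \Lambda(c_2,\dots ,c_{\bar\ell})\otimes \bZ/2[e_{2^{t+1}}]\;\thrarr\; CH^*(R(\bG))/2 \]
factors through the quotient by the ideal $(e_{2^{t+1}}^{2^{h-t-1}})$, yielding the claimed surjection from $\Lambda(c_2,\dots ,c_{\bar\ell})\otimes \bZ/2[e_{2^{t+1}}]/(e_{2^{t+1}}^{2^{h-t-1}})$ onto $CH^*(R(\bG))/2$. The main conceptual obstacle would be checking that the Chern class $c_{2^h}(\Delta_{\bC})$ really can be lifted to $CH^*(BG_k)$ rather than only $CH^*(B\bar G_k)$; but this is exactly the content of Theorem 6.8 (and its proof via Corollary 4.3), so there is no further work to do here. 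Any routine verification of the indexing on $h$ versus $t$ amounts to noting $2^h\ge 2^{t+1}$ so that $2^{h-t-1}$ is a well-defined positive integer in the real-type range under consideration.
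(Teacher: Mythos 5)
Your argument is correct and is essentially the paper's own (implicit) deduction: the corollary is stated without proof right after Lemma 7.2, and the intended reasoning is exactly your combination of the surjection of Lemma 7.1 with $i^*(c_{2^h}(\Delta_{\bC}))=e_{2^{t+1}}^{2^{h-t-1}}$ from Lemma 7.2, the existence of this Chern class in $CH^*(BG_k)$ from Theorem 6.8, and the vanishing of $j^*\circ i^*$ in positive degrees from Lemma 3.1. Since $j^*$ is a ring map, the whole ideal generated by $e_{2^{t+1}}^{2^{h-t-1}}$ already dies in $CH^*(\bF)/2$, so the factorization through the truncated algebra goes through as you state.
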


\begin{lemma}
Let $G(n)=Spin(n)$ and $\bG(n)$ be
the versal $G(n)_k$-torsor.  Given $n\ge 1$, there is 
$N\ge 7$ such that
\[  CH^*(R(\bG(N))/2\cong \Lambda (c_2,...,c_n)\quad
for\ all \ *\le  n.\]
\end{lemma}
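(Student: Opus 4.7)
The plan is to apply Corollary~7.3 to $\bG(N)$ for suitably large $N$ of real type, and exploit the fact that in the resulting surjection the ``extra'' generators --- namely $c_i$ with $i>n$ and $e_{2^{t+1}}$ --- all live in Chow degrees exceeding $n$. Fix $n\geq 1$. Choose $N=2L+1$ with $N\equiv\pm 1\pmod 8$ (so that $Spin(N)$ is of real type, as required throughout Section~7), and with $L\geq n$ and $L$ not a power of $2$, so that $\bar L=L$. Then $t=[\log_2 L]$ satisfies $2^{t+1}>L\geq n$ automatically, and both conditions $\bar L\geq n$ and $2^{t+1}>n$ hold for all sufficiently large $N$.

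For such $N$, Corollary~7.3 supplies a surjection
$$\Lambda(c_2,\ldots,c_{\bar L})\otimes \bZ/2[e_{2^{t+1}}]/(e_{2^{t+1}}^{2^{h-t-1}})\twoheadrightarrow CH^*(R(\bG(N)))/2.$$
Since each $c_i$ has Chow degree $i$ and $|e_{2^{t+1}}|=2^{t+1}>n$, the truncation of the source to Chow degrees $\leq n$ is precisely $\Lambda(c_2,\ldots,c_n)^{\leq n}$, giving the restricted surjection
$$\Lambda(c_2,\ldots,c_n)^{\leq n}\twoheadrightarrow CH^{\leq n}(R(\bG(N)))/2.$$

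It remains to verify this is injective, i.e.\ that no extra relations among $c_2,\ldots,c_n$ are imposed in Chow degrees $\leq n$. The potentially troublesome relations --- the exterior-square identities $c_i^2=0$ for $i>n$ and the polynomial truncation $e_{2^{t+1}}^{2^{h-t-1}}=0$ --- all involve classes of degree strictly greater than $n$, so they cannot affect square-free products of the $c_i$ with $i\leq n$ summing to $\leq n$. The main obstacle is making this stability rigorous: one needs to confirm that as $N$ grows, no new identities propagate down into the range $*\leq n$. This is best handled either by invoking the Petrov--Semenov--Zainoulline motivic decomposition (1.4) and checking stability of its indecomposable factors in each fixed degree, or more concretely by exhibiting linearly independent lifts in $CH^*(\bG(N)/B_k)/2$ of all square-free monomials in $c_2,\ldots,c_n$ of total degree $\leq n$, using the $BP$-theoretic formula of Lemma~2.1 together with the Chern-class identifications $i^*(c_{2i})=(c_i')^2$ of Lemma~7.2. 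Taking the limit as $N\to\infty$ then also recovers the inverse-limit statement consistent with Theorem~1.3.
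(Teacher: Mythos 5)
The surjectivity half of your argument is fine, but it is already the content of Corollary 7.3; the whole point of the lemma is injectivity in degrees $\le n$, and your proposal does not prove it. Your key assertion --- that the ``troublesome'' relations $c_i^2=0$ for $i>n$ and $e_{2^{t+1}}^{2^{h-t-1}}=0$ live in degree $>n$ and therefore cannot disturb square-free monomials of degree $\le n$ --- only concerns relations already present in the \emph{source} of the surjection of Corollary 7.3. It says nothing about the kernel of that surjection, i.e.\ about possible further relations holding in $CH^*(R(\bG(N)))/2$ in low degrees, which is exactly what must be excluded (recall the paper stresses that $CH^*(R(\bG'))/2$ is not known in general, so one cannot read off its low-degree structure from a presentation of the source). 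You acknowledge this and sketch two ways out, but carry out neither; moreover the second one, exhibiting linearly independent lifts in $CH^*(\bG(N)/B_k)/2$, would not suffice as stated, since independence upstairs does not give nonvanishing of the images under the projection $CH^*(\bF)/2\to CH^*(R(\bG))/2$.

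The paper closes precisely this gap by a Morava $K$-theory argument, which is absent from your proposal. One chooses $N=2\ell+1$ with $2^2\le 2^n<2^t\le \ell<2^{t+1}$ (note the condition $n<t$, stronger than your $2^{t+1}>n$, is actually used), and supposes a relation $x=\sum c_{i_1}\cdots c_{i_s}=0$ in $CH^*(R(\bG))/2$ with $2\le i_1<\dots<i_s<2^n$. Working in $k(n)^*$ with $k(n)^*(\bar R(\bG))\cong k(n)^*\otimes P(y)$ and using the formula of Lemma 2.1 (Nishimoto's $Q_n$-action), each $c_{i_j}$ restricts to $v_ny_m$ with $2^n<m<2^{n+1}$, so $m$ is not a power of $2$ and $y_m$ is a genuine generator of $P(y)$; since $n<t$, these classes are not absorbed by the terms coming from $e_{2^{t+1}}$, which lie in the ideal $(v_n^2,E)$ with $E$ generated by the high $y_{2j}$, $j>2^t$. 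Hence $x$ restricts to $v_n^sx'\neq 0$ with $x'$ an additive generator of $P(y)$, and the hypothetical vanishing in $CH^*/2$ would force $v_n^{s-1}x'$ into the image of restriction, contradicting that this image is generated by products of the $res(c_j)$, each of which is $\equiv 0$ mod $v_n$. Without an argument of this kind your proof establishes only a surjection $\Lambda(c_2,\dots,c_n)\to CH^*(R(\bG(N)))/2$ in degrees $\le n$, not the claimed isomorphism.
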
 
\begin{proof}
Let $N=2\ell+1$,  and
$ 2^2\le 2^n<2^t\le \ell<2^{t+1}.$  \\
We will  see 
\[  CH^*(R(\bG(N))/2\cong \Lambda (c_2,...,c_{\ell})
\quad for \  *< 2^n. \]

Suppose that
\[ x= \sum  c_{i_1}... c_{i_s} =0\in CH^*(R(\bG))/2\quad
for \ \ 2\le  i_1<...<i_s<2^n.\]
Recall $k(n)^*=\bZ/p[v_n]$ and $k(n)^*(\bar R(\bG))
\cong k(n)^*\otimes P(y)$.
We note that in $k(n)^*(\bar R(\bG)$ 
\[c_{i_j}=v_ny_m\quad with\ \   m={2^n-2+2i_j}.\]
  Since $2^n<m<2^{n+1}$,
$m$  is not a form $2^r$, $r>3$. Hence $y_m$
 is a generator of $P(y)$.

Moreover  recall that  
\[ e_{2^{t+1}}=v_ny_{2^{t+1}-2+2^n-2}y_{2^{t}+2}+...\]
This element  is in the $ideal(v_n^2,E)$ with
$E=(y_{2j}|j> 2^{t}).$
Hence we see $c_{i,j}=v_ny_m$ is  also nonzero $mod(v_n^2,E)$ since $n<t$.

Thus we see that 
for $x'= y_{2^n-2+2{i_1}}....y_{2^n-2+2i_s}$,
which is an additive generator of $P(y)$
\[ x=\sum v_n^sx'\not =0 \in k(n)^*(\bar R(\bG))/2\cong k(n)^*\otimes P(y).\]
Moreover $v_n^{s-1}x'\not \in Im(res)$, because 
$Im(res)$ 
 is generated by $res(c_{j_1})...res(c_{j_r})$
and each $res(c_j)=0$ $mod(v_n)$.
This is a contradiction.
\end{proof}

\begin{cor}  Let $G(N)=Spin(N)$ and $\bG(N)$ be versal.  Then we  have
\[  lim_{\infty \gets N}CH^*(R(\bG(N))/2\cong \Lambda(c_2,c_3,...,c_{n},...),\]
\[  lim_{\infty \gets N}CH^*(\bF)/2\cong S(t)/(2,c_2^2,c_3^2,...,c_{n}^2,...),\]
\end{cor}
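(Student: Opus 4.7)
The proposal is to derive Corollary 7.5 by passing the finite-level isomorphism of Lemma 7.4 through the inverse limit over $N$, and then feeding the resulting presentation of $CH^*(R(\bG))/2$ into Lemma 2.2 to obtain the presentation of $CH^*(\bF)/2$.

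\textbf{First isomorphism.} The inclusions $Spin(N)\hookrightarrow Spin(N+1)$ induce compatible maps on the classifying spaces $BSpin(N)_k$ and on the associated versal twisted flag varieties, and the Chern classes $c_i=c_i'$ (pulled back via $Spin(N)\to SO(N)\to U(N)$) are stable under these maps. By Lemma 7.4, for each fixed degree $d\ge 0$ there exist $n\ge d$ and $N_0$ (depending on $d$) such that for every $N\ge N_0$,
\[ CH^d(R(\bG(N)))/2 \;\cong\; \Lambda(c_2,\ldots,c_n)^d. \]
Since this stabilization holds in every fixed degree, taking $\lim_{\infty\gets N}$ gives $\Lambda(c_2,c_3,\ldots)$ in each degree, which is the first isomorphism.

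\textbf{Second isomorphism.} Apply Lemma 2.2 with $p=2$, $A(b)=\bZ/2[c_2,c_3,\ldots]$, and relations $f_i(b)=c_i^2$. The first step just established, together with the identification $\Lambda(c_2,c_3,\ldots)\cong A(b)/(c_i^2\mid i\ge 2)$, yields the required presentation of $CH^*(R(\bG))/2$ in the limit. For the vanishing hypothesis, Lemma 7.2 gives $i^*(c_{2i})=(c_i')^2=c_i^2$ in $S(t)/2=CH^*(BB_k)/2$, so $c_i^2\in\mathrm{Ideal}(\Img(i^*))$, and hence $j^*(c_i^2)=0$ by Lemma 3.1. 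Lemma 2.2 then delivers $CH^*(\bF)/2\cong S(t)/(2,c_2^2,c_3^2,\ldots)$ after passing to the limit.

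\textbf{Main obstacle.} The delicate point is the interaction between the finite-$N$ description — which by Lemma 7.1 also contains an additional generator $e_{2^{t+1}}$ with $2^t\le\ell<2^{t+1}$ — and the clean limiting statement. Because $|e_{2^{t+1}}|=2^{t+1}$ tends to infinity with $N$, this extra generator and any auxiliary relations involving it are pushed out of any fixed finite degree once $N$ is large enough, so they leave no trace in the inverse limit. Verifying carefully that the restriction maps in the inverse system respect this degree-wise stabilization, and that the Chern class generators $c_i$ are indeed compatibly defined at every stage, is the main technical check; once this is done, the rest is a formal combination of Lemma 7.4, Lemma 7.2, Lemma 3.1, and Lemma 2.2.
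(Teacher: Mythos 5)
Your argument reaches the right conclusion, and your treatment of the first isomorphism is exactly what the paper does implicitly: the paper regards it as immediate from Lemma 7.4, and your degreewise stabilization (with the remark that the extra generator $e_{2^{t+1}}$ of Lemma 7.1 has degree $2^{t+1}\to\infty$ and so disappears from any fixed degree) just makes that explicit. For the second isomorphism you take a genuinely different route: the paper's proof is one line, invoking the additive decomposition $CH^*(\bF)/2\cong CH^*(R(\bG(N)))/2\otimes S(t)/(c_2,c_3,\ldots)$ (the versal form of the motivic decomposition (1.4), cf.\ Corollary 3.6) and combining it with the first isomorphism, so no vanishing check is needed; you instead go through Lemma 2.2 with $A(b)=\bZ/2[c_2,c_3,\ldots]$ and $f_i=c_i^2$, verifying the hypothesis $j^*(c_i^2)=0$ via $c_i^2=i^*(c_{2i})$ (Lemma 7.2) and $j^*i^*=0$ (Lemma 3.1). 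That verification is correct --- it is the same Chern-class input the paper uses right afterwards in Corollary 7.6 --- and your route has the merit of exhibiting the answer as a quotient of $S(t)$ by an explicit ideal rather than merely as an additive tensor product. The one point you should tighten: Lemma 2.2 is stated for a fixed $\bF$ and requires a presentation of $CH^*(R(\bG))/p$ by relations in $A(b)$ in \emph{all} degrees, which is not available at any finite $N$ (Lemma 7.4 gives it only in the stable range, and at finite level $A(b)$ really contains $e_{2^{t+1}}$), so Lemma 2.2 cannot be quoted verbatim ``after passing to the limit''; one needs a degreewise variant, namely that for $N\gg d$ the kernel of $S(t)/2\to CH^*(\bF(N))/2$ in degree $d$ is exactly the degree-$d$ part of the ideal $(c_2^2,c_3^2,\ldots)$, and the ``kernel is no larger'' half of this is precisely the counting supplied by the additive decomposition. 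So either prove that degreewise statement or, more simply, replace the appeal to Lemma 2.2 by the unconditional additive isomorphism $CH^*(\bF)/2\cong CH^*(R(\bG(N)))/2\otimes S(t)/(b)$, which is the paper's route; with that adjustment your proof is complete.
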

\begin{proof}  The second isomorphism follows from
the additive isomorphism
\[CH^*(\bF)/2\cong CH^*(R\bG(N))/2\otimes S(t)/(c_2,c_3,...).\]
\end{proof}
\begin{cor}
We have \
$  lim_{\infty \gets N}D(\bG(N))=0. $
\end{cor}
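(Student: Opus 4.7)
The plan is to combine the preceding Corollary, which identifies
$\lim_{\infty\gets N}CH^*(\bF(N))/2\cong S(t)/(2,c_2^2,c_3^2,\dots)$,
with the Chern class description of $RQ(n)\subset CH^*(BSpin(n)_k)/2$
from Theorem 6.7 and the pullback formulas of Lemma 7.2. Since
$Im(i^+)\subseteq Ker(j^+)$ always (Lemma 3.1), proving the vanishing
$\lim D(\bG(N))=0$ reduces to showing that in each fixed degree, every
generator of $Ker(j^+(\bG(N)))$ eventually lies in
$Ideal(Im(i^+(\bG(N))))$ for $N$ large.

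From the preceding Corollary one reads off that in the limit
$Ker(j^+(\bG(N)))$ is generated by the squares $c_i^2$ of the elementary
symmetric functions ($i\ge 2$) in $\lim S(t)/2$. The rank-dependent
relation $c_1^{2^{t+1}}$ that appears in the finite-level presentation of
$CH^*(\bF)/2$ has degree growing without bound with $\ell$, so it drops
out in any fixed degree for $N$ sufficiently large.

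Next, Theorem 6.7 exhibits the Chern classes $c_{2i}$ coming from
$Spin(n)\to SO(n)\to U(n)$ as elements of $RQ(n)\subset
CH^*(BSpin(n)_k)/2$ whenever $2i\le n$, and Lemma 7.2 computes
$i^*(c_{2i})=(c_i')^2$ in $S(t)/2$. Hence for any fixed $i\ge 2$, once
$N\ge 2i$ we have $c_i^2\in Im(i^+(\bG(N)))\subseteq Ideal(Im(i^+))$.
Combined with the automatic reverse inclusion
$Ideal(Im(i^+))\subseteq Ker(j^+)$, this shows that in each fixed degree
the two ideals $Ker(j^+)$ and $Ideal(Im(i^+))$ coincide for $N$ large,
which gives $\lim_{\infty\gets N}D(\bG(N))=0$.

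The main obstacle I anticipate is handling the limit cleanly: at each
finite rank $2\ell+1$ there are ``auxiliary'' contributions, namely
$c_1^{2^{t+1}}$ inside $Ker(j^+)$ and
$i^*(c_{2^h}(\Delta_{\bC}))=e_{2^{t+1}}^{2^{h-t-1}}$ inside $Im(i^+)$
coming from the spin representation. Both live in degrees tending to
infinity with $\ell$ (in degrees $2^{t+1}$ and $2^h$ respectively), so
they do not obstruct the degree-wise argument, but the fixed-degree
stabilization needs to be verified carefully so that one really gets the
inverse-limit vanishing rather than merely an asymptotic statement.
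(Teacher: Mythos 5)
Your proof is essentially the paper's own argument: the paper quotes its Lemma 6.9 (i.e.\ the Chern classes $c_{2i}$ of Theorem 6.8 together with $i^*(c_{2i})=c_i^2$ from Lemma 7.2) to conclude that the ideal generated by $Im(i^*)$ contains all $c_i^2$, which by the preceding Corollary generate $Ker(j^+)$ in the limit, so $D(\bG(N))$ vanishes in the inverse limit. Your remarks on the degreewise disappearance of the rank-dependent classes $c_1^{2^{t+1}}$ and $e_{2^{t+1}}^{2^{h-t-1}}$ only make explicit a stabilization the paper leaves implicit, so the two proofs coincide in substance.
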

\begin{proof}
From Lemma 7.9, we have
\[  lim_{\infty \gets N} Ideal(i^*/2CH^*(BG(N)_k)  
 \supset  ( D_{\infty}/2)\]
\[= Ideal(c_4,c_6,...,c_{2i},...)\subset CH^*(BB_k)/2.\]
We get the result from $c_{2i}\to c_i^2$ by the map $i^*$.
\end{proof}


\section{ $Spin(7)$ for $p=2$} 

Hereafter this section, we assume $G=Spin(7)$ and $p=2$.
It is well known
\[ H^*(BG;\bZ/2)\cong \bZ/2[w_4,w_6,w_7,w_8]\]
 where $w_i$  for $i\le 7$ (resp. $i=8$)
are the Stiefel-Whitney classes for the representation
induced from $Spin(7)\to SO(7)$
(resp. the spin representation $\Delta$).

Thus the integral cohomogy is written as
(using $Q_0w_6=w_7$)
\[ H^*(BG)\cong \bZ_{(2)}[w_4, c_6,w_8]\otimes (\bZ_{(2)}\{1\}
\oplus \bZ/2[w_7]\{w_7\})\]
\[ \cong D\otimes \Lambda_{\bZ}(w_4,w_8)
\otimes (\bZ_{(2)}\{1\}
\oplus \bZ/2[w_7]\{w_7\})\]
where $D=\bZ_{(2)}[c_4,c_6,c_8]$ with $c_i=w_i^2$.

Next we consider the Atiyah-Hirzebruch spectral
sequence
\[ E_2^{*,*'}\cong H^*(BG)\otimes BP^*
\Longrightarrow BP^*(BG).\]
We can compute the spectral sequence
\[ grBP^*(BG)\cong D\otimes (BP^*
\{1,2w_4,2w_8,2w_4w_8,v_1w_8\}\]
\[ \oplus BP^*/(2,v_1,v_2)[c_7]\{c_7\}/(v_3c_7c_8)).\]
Then $BP^*(BG)\otimes_{BP^*}\bZ_{(2)}$
is isomorphic to ([Ko-Ya])
\[ D\{1,2w_4,2w_8,2w_4w_8,v_1w_8\}/(2v_1w_8)
 \oplus D/2[c_7]\{c_7\}.\]

On the other hand, the Chow ring of $BG_{\bC}$ is given
by Guillot (\cite{Gu},\cite{YaF4})
\begin{thm}  Let $k=\bar k$.  Then we have isomorphisms
\[ CH^*(BG_k)\cong BP^*(BG_k)\otimes_{BP^*}\bZ_{(2)}\]
\[ \cong D
\otimes(\bZ_{(2)}\{1,c_2',c_4'.c_6'\}
\oplus \bZ/2\{\xi_3\}\oplus \bZ/2[c_7]\{c_7\})\]
where $cl(c_i)=w_i^2$, $cl(c_2')=2w_4$, $cl(c_4')=2w_8$, $cl(c_6')=2w_4w_8$,  and $cl(\xi_3)=0$, $|\xi_3|=6$.
However  $cl_{\Omega}(\xi_3)=v_1w_8$ in $BP^*(BT)^W$,
for the cycle map $cl_{\Omega}$ of the algebraic 
cobordism.  
\end{thm}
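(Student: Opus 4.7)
The plan is to prove the theorem by showing that Totaro's modified cycle map
\[
\bar{cl}:CH^*(BG_k)\longrightarrow BP^*(BG)\otimes_{BP^*}\bZ_{(2)}
\]
from (4.2) is an isomorphism under the hypothesis $k=\bar k$, using the Atiyah-Hirzebruch computation of the right-hand side (displayed just above the theorem) as the target.  Since both sides are $D$-modules, the task reduces to constructing explicit algebraic lifts of each additive generator $1,c_2',c_4',c_6',\xi_3,c_7,c_7^2,\ldots$ and then verifying bijectivity on graded pieces by degree count.

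The easy generators are Chern classes.  The natural representation $Spin(7)\to SO(7)\hookrightarrow U(7)$ supplies Chern classes $c_i$ with $cl(c_i)=w_i^2$, generating $D=\bZ_{(2)}[c_4,c_6,c_8]$, together with $c_7=w_7^2$ (which is $2$-torsion because $w_7=Q_0(w_6)$).  The complexified spin representation $\Delta_{\bC}:Spin(7)\to U(8)$ contributes three further Chern classes to serve as $c_2',c_4',c_6'$; their cycle-class images $2w_4,\,2w_8,\,2w_4w_8$ are read off by restricting to the maximal torus, where $\Delta_{\bC}$ splits into the eight weights $\tfrac12(\pm t_1\pm t_2\pm t_3)$ and the Chern-class formulae can be applied directly.

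The main obstacle is the construction of $\xi_3\in CH^6(BG_k)$, which satisfies $cl(\xi_3)=0$ but $cl_{\Omega}(\xi_3)=v_1w_8$, so that it cannot arise as a Chern class of any representation.  Following Guillot \cite{Gu}, the strategy is to use Totaro's presentation $BG_k\sim(V\setminus S)/G_k$ for a generically free $G_k$-representation $V$ of large dimension, and to exhibit $\xi_3$ as the class of a carefully chosen $G_k$-invariant codimension-$3$ subvariety of $V\setminus S$.  The identification $cl_{\Omega}(\xi_3)=v_1w_8$ is then verified by restricting to the maximal elementary abelian $2$-subgroup $A$ of rank $h+1=4$ (as in \cite{Qu}), where $BP^*(BA)$ is fully known and the image of $v_1w_8$ is nonzero; the $v_1$-component encodes the action of the Milnor operation $Q_1$ on $w_8$ and is detected by the cobordism class of the chosen subvariety but not by its rational equivalence class.

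With all generators lifted, surjectivity of $\bar{cl}$ is immediate.  Injectivity then follows because the additive basis of the target consists of a free $D$-module on $\{1,c_2',c_4',c_6'\}$, the single element $\xi_3$ of order $2$, and the $c_7$-tower, and the constructed lifts occupy the correct degrees with the correct annihilators; any nontrivial kernel would produce an element of $CH^*(BG_k)/Tor$ invisible in $BP^*(BG)\otimes_{BP^*}\bZ_{(2)}/Tor$, contradicting Lemma~4.1 applied to the torsion-free quotient.  The ring isomorphism and the identifications of cycle-class images stated in the theorem then follow.
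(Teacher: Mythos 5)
The paper does not prove this statement at all: it is quoted as Guillot's computation of $CH^*(BSpin(7)_{\bC})$ (\cite{Gu}, see also \cite{YaF4}), so the real question is whether your outline would constitute an independent proof. It would not, for two reasons. First, the heart of the theorem is precisely the existence of $\xi_3$ with $cl(\xi_3)=0$ but $cl_{\Omega}(\xi_3)=v_1w_8$, and your proposal replaces this by the phrase ``a carefully chosen $G_k$-invariant codimension-$3$ subvariety'' without exhibiting any such cycle or explaining why its class is nonzero, $2$-torsion, and hits $v_1w_8$ under the cobordism cycle map; detection on a rank-$4$ elementary abelian subgroup is also asserted rather than carried out, and since $cl(\xi_3)=0$ the class restricts trivially in ordinary cohomology, so the detection must genuinely take place in $\Omega^*/BP^*$ of the subgroup, which needs an argument. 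Guillot's actual computation gets this (and the upper bound below) from the fibration coming from $Spin(7)/G_2\cong S^7$, i.e.\ by comparing with $CH^*(BG_2)$ via the sphere/vector-bundle of the $8$-dimensional spin representation, together with stratification-method input; nothing of that structural work appears in your sketch.

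Second, your injectivity step is fallacious. Lifting every additive generator of $BP^*(BG)\otimes_{BP^*}\bZ_{(2)}$ does give surjectivity of $\bar{cl}$, but injectivity requires an \emph{upper} bound on $CH^*(BG_k)$: a hypothetical kernel element of $\bar{cl}$ would necessarily be a torsion class (indeed a Griffiths-type class, killed even by the ordinary cycle map), hence completely invisible in $CH^*(BG_k)/Tor$, so your appeal to Lemma~4.1 on the torsion-free quotient yields no contradiction. Ruling out such extra torsion is exactly the hard part of the theorem (compare the discussion of $Grif$ and $Tor/Grif$ in \S 10 and \S 12 of the paper, where for $Spin(9)$ and $F_4$ the analogous question is left open), and it is settled for $Spin(7)$ only by Guillot's explicit computation, not by the degree count you propose.
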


Now we consider $CH^*(\bG/B_k)$.
   Let $G=Spin(7)$ and $\bG$ be versal.
The group $G$ is of type $(I)$ and we can take
$b_1=c_2,b_2=c_3, b_3=e_4$ with $|e_4|=8$.

The Chow ring $CH^*(\bG/B_k)$ is given in
Theorem 2.3 
(in fact, $G$ is of type $(I)$) 
\[ CH^*(\bG/B_k)\cong S(t)/((2c_2,c_2^2,c_2c_3,c_3^2,e_4),\quad S(t)=\bZ_{(2)}[t_1,t_2,t_3].\]
Hence we have $Ker(j(\bG))\cong (2c_2,c_2^2,c_2c_3,c_3^2,e_4).$  Recall
\[CH^*(BG_{\bar k})/(Tor)\cong CH^*(BB_k)^W\cong D\{1,c_2'',c_4'',c_6''\}\]
where  $c_i''$ is a Chern class of the (complex) 
spin 
representation.  
Note $CH^*(BG_{\bar k})/Tor\cong
CH^*(BG)/Tor$ from Lemma 4.3.
Since $i(c_2'')=2w_4,...$, we see
\[ D/2\cong Im(i^*/2: CH^*(BG_k)\to CH^*(BT)/2).\]

We can see that the map $i^*$ is given 
$ c_4\mapsto c_2^2,$ $c_6\mapsto c_3^2$,
$ c_8''\mapsto  e_4^2,$ and 
\[ c_2''\mapsto 2c_2,\ \ c_4''\mapsto 2e_4,\ \
c_6''\mapsto 2c_2e_4.\]
In particular $i^*CH^*(BG_k)=i^*CH^*(BG_{\bar k})$.
Thus we see 
\begin{prop}  Let $G=Spin(7)$ and $\bG$ be versal.
Then  we have additively
\[ D(\bG)\cong \Lambda(c_2c_3,e_4)^+\otimes
S(t,c)\quad for\ S(t,c)\cong S(t)/(c_2,c_3,e_4).\]
\end{prop}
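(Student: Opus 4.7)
The plan is to compute $D(\bG)$ directly from its definition, by identifying $\Ker(j_2^+(\bG))$ and $Ideal(\Img(i_2^+))$ as explicit ideals in $S(t)/2$, and then invoking Lemma~3.2 to reduce the quotient to a finite computation in $A(b) = \bZ/2[c_2, c_3, e_4]$.

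First, since $G = Spin(7)$ is of type $(I)$ with $b_1 = c_2$, $b_2 = c_3$, $b_3 = e_4$ and $\ell = 3 = 2p-1$, Theorem~2.4 (as recalled at the start of the section) gives
\[ CH^*(\bG/B_k)/2 \cong S(t)/(c_2^2,\ c_2c_3,\ c_3^2,\ e_4), \]
so $\Ker(j_2^+(\bG)) = (c_2^2, c_2c_3, c_3^2, e_4)$ in $S(t)/2$. Next, Guillot's Theorem~8.1 lists the algebra generators of $CH^*(BG_k)$, and the restriction formulas displayed just before the proposition---$c_4 \mapsto c_2^2$, $c_6 \mapsto c_3^2$, $c_8'' \mapsto e_4^2$, $c_2'' \mapsto 2c_2$, $c_4'' \mapsto 2e_4$, $c_6'' \mapsto 2c_2e_4$---together with the observation that the $2$-torsion classes $\xi_3$ and $c_7$ must restrict to $0$ in the torsion-free $S(t)$, show that after passing mod $2$ the only surviving generators of the image are $c_2^2, c_3^2, e_4^2$. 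Hence $Ideal(\Img(i_2^+)) = (c_2^2, c_3^2, e_4^2)$ in $S(t)/2$.

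Since both ideals are generated by elements of $A(b)$, Lemma~3.2 applies to each and gives
\[ D(\bG) \cong \bigl[\, A(b)(c_2^2, c_2c_3, c_3^2, e_4)\,\big/\,A(b)(c_2^2, c_3^2, e_4^2) \,\bigr] \ot_{\bZ/2} S(t, c). \]
This quotient is computed in the ring $A(b)/(c_2^2, c_3^2, e_4^2) \cong \Lambda(c_2, c_3, e_4)$, where it becomes the ideal generated by $c_2c_3$ and $e_4$. In this quotient one has $(c_2c_3)^2 = c_2^2c_3^2 = 0$ and $e_4^2 = 0$, so $c_2c_3$ and $e_4$ satisfy the exterior-algebra relations, and there is a natural surjection $\Lambda(c_2c_3, e_4)^+ \to A(b)(c_2^2, c_2c_3, c_3^2, e_4)/A(b)(c_2^2, c_3^2, e_4^2)$.

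The main obstacle is to verify that this surjection is an additive isomorphism, i.e.\ that $\{c_2c_3,\ e_4,\ c_2c_3\,e_4\}$ is precisely the additive basis of $(c_2c_3, e_4) \subset \Lambda(c_2, c_3, e_4)$. This reduces to a monomial-by-monomial bookkeeping in a small exterior algebra together with checking that no mixed products like $c_2 e_4$ or $c_3 e_4$ survive as independent generators; this is the most delicate computational step, and tensoring the resulting identification with $S(t,c)$ completes the claim.
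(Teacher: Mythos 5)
Your reduction is exactly the paper's: $\Ker(j_2^+)=(c_2^2,c_2c_3,c_3^2,e_4)$ from Theorem 2.4, $Ideal(\Img(i_2^+))=(c_2^2,c_3^2,e_4^2)$ from Guillot's generators together with the displayed restriction formulas (all torsion classes and all $c_i''$ restrict to $0$ mod $2$), and then Lemma 3.2 to reduce everything to $A(b)=\bZ/2[c_2,c_3,e_4]$; the paper's own ``proof'' is nothing more than these two identifications followed by the statement. The gap is precisely the step you defer as ``delicate bookkeeping,'' and in fact that step fails. In $A(b)/(c_2^2,c_3^2,e_4^2)\cong\Lambda(c_2,c_3,e_4)$ the image of $(c_2^2,c_2c_3,c_3^2,e_4)$ is the \emph{ideal} generated by $c_2c_3$ and $e_4$, whose additive basis is $\{c_2c_3,\,e_4,\,c_2e_4,\,c_3e_4,\,c_2c_3e_4\}$ --- five elements, not three. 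So the map you call a ``natural surjection'' from $\Lambda(c_2c_3,e_4)^+$ is injective but not surjective. Concretely, $c_2e_4$ lies in $\Ker(j_2^+)$ (it is a multiple of $e_4$) but not in $(c_2^2,c_3^2,e_4^2)S(t)/2$: since $S(t)/2$ is free over $A(b)$ (the $b_i$ are a system of parameters), extension of ideals from $A(b)$ is faithful, and the monomial $c_2e_4$ is divisible by none of $c_2^2,c_3^2,e_4^2$; moreover its $S(t)/(b)$-component in the Lemma 3.2 decomposition is $1$, so it cannot be absorbed as $e_4\cdot u$ with $u\neq 0$ in $S(t,c)$. The same applies to $c_3e_4$.

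Hence, with the kernel and image exactly as you (and the paper) identify them, the computation closes to the strictly larger answer
\[ D(\bG)\;\cong\;\bZ/2\{c_2c_3,\ e_4,\ c_2e_4,\ c_3e_4,\ c_2c_3e_4\}\otimes S(t,c), \]
and not to $\Lambda(c_2c_3,e_4)^+\otimes S(t,c)$ (already in Chow degree $6$ the class $c_2e_4\otimes 1$ is an extra generator). To prove the stated formula you would have to exhibit elements of $CH^*(BG_k)/2$ whose restrictions generate $c_2e_4$ and $c_3e_4$ modulo the kernel, and Guillot's list supplies none: the only generators hitting these monomials do so with a factor $2$ (e.g.\ $c_6'\mapsto 2c_2e_4$), hence die mod $2$, while the torsion classes $\xi_3$, $c_7$ restrict to $0$. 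So the step you flagged is not merely unverified; it cannot be completed from the data used here, and the paper's own argument contains no further ingredient that would dispose of $c_2e_4$ and $c_3e_4$.
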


\section{$Spin(9)$ for $p=2$} 

In  this section, we assume $G=Spin(9)$ and $p=2$
and hence $h=4$.
It is well known (in fact $w_2,w_3,w_5\in J$)
\[ H^*(BG;\bZ/2)\cong \bZ/2[w_4,w_6,w_7,w_8,w_{16}]\]
where $w_i$  for $i\le 8$ (resp. $i=16$)
are the Stiefel-Whitney class for the representation
induced from $Spin(9)\to SO(9)$
(resp. the spin representation $\Delta$
and hence $w_{16}=w_{16}(\Delta)=e$).

Recall that  $H^*(BG)$ has just $2$-torsion by Kono.
Let us write
\[D=\bZ_{(2)}[c_4,c_6,c_8,c_{16}]\quad with\ c_i=w_i^2.\]
Then we can write
\[ H^*(BG)/Tor\cong D\otimes \Lambda(w_4,w_8,w_{16}), \quad 
 Tor\cong D\otimes \bZ/2[w_7]^+.\]

Next we consider the Atiyah-Hirzebruch spectral
sequence
\[ E_2^{*,*'}\cong H^*(BG)\otimes BP^*
\Longrightarrow BP^*(BG).\]
Using $Q_1(w_4)=w_7, Q_2(w_7)=c_7$,  $Q_2(w_8)=w_7w_8$
and $Q_3(w_7w_8)=c_7c_8,$
we can compute the spectral sequence
(page 796, (6.14) in [Ko-Ya]).
Let us write $D'=\bZ_{(2)}[c_4,c_6,c_8]$ and $D''=\bZ_{(2)}[c_4,c_6,c_{16}]$.  Then 
the infinite term
is given
\[E_{\infty}=grBP^*(BG) \]
\[ \cong  D'\otimes (BP^*\{1,2w_4,2w_8,2w_4w_8,v_1w_8\}
\oplus  BP^*/(2,v_1,v_2)[c_7]^+/(v_3c_7c_8))\]
\[\oplus D'' \otimes (BP^*\{2w_4w_{16}, v_1w_{16},v_2w_{16}\}  
  \oplus  BP^*/(2,v_1,v_2)[c_7]\{c_7c_{16}\})\]
\[  \oplus D\otimes (BP^* \{2w_8,2w_4w_8,v_1w_8
\}\{w_{16}\}
 \oplus  BP^*/(2,v_1,v_2,v_3,v_4)[c_7]\{c_7c_8c_{16}\}).\]
However  $BP^*(BG)\otimes_{BP^*}\bZ_{(2)}$
is not so complicated, and it is isomorphic to 
\[  BP^*(BG)\otimes_{BP^*}\bZ_{(2)}\ \cong \ D\{1\}\oplus D\otimes 2\Lambda_{\bZ}(w_4,w_8,w_{16})^+\]
\[ \oplus D/2\{ v_1w_8, v_1w_{16},v_1w_8w_{16},v_2{w_{16}}\}
\oplus  D/2[c_7]\{c_7\} .\]
 
The elements in $BP^*(BG)$
corresponding to $v_1w_8,...,v_2w_{16}$ are all 
torsion free elements.  However they are $2$-torsion 
in $BP^*(BG)\otimes _{BP^*}\bZ_{(2)}$,
e.g., 
\[ 2v_2w_{16}\in v_2BP^*(BG),\quad since\ 2w_{16}
\in BP^*(BG).\]

We will prove the following lemma.
\begin{lemma} Each element in $2\Lambda_{\bZ}(w_4,w_8,w_{16})$ is represented by
a sum of products of  Chern classes.
\end{lemma}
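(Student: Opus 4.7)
The plan is to build, for each generator $2w_{\alpha}$ of $2\Lambda_{\bZ}(w_{4},w_{8},w_{16})\subset H^{*}(BG)/Tor$, an explicit polynomial in integral Chern classes of complex representations of $Spin(9)$ which equals $2w_{\alpha}$ modulo the polynomial subring $D=\bZ_{(2)}[c_{4},c_{6},c_{8},c_{16}]$. Since each $c_{i}=w_{i}^{2}$ is already a Chern class of the complexified vector representation, absorbing elements of $D$ costs nothing, so this reduction is sufficient. The representations doing the work are $V_{\bC}$ (from $Spin(9)\to SO(9)\hookrightarrow U(9)$) and the spin representation $\Delta_{\bC}$, together with a tensor product $V_{\bC}\otimes\Delta_{\bC}$.

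The key mechanism is the following: if $R$ is a real representation of $Spin(9)$ for which $w_{2k}(R)=w_{k}(R)^{2}$ vanishes in $H^{*}(BG;\bZ/2)$, then $c_{k}(R_{\bC})\equiv 0\pmod 2$ integrally, hence equals $2$ times an integral class in degree $2k$. Applying this to $V$: since $w_{2}(V)=0$ in $H^{*}(BSpin(9);\bZ/2)$ by Quillen's relations in $J$, we get $c_{2}(V_{\bC})=\pm 2w_{4}$ up to a unit in degree $4$. Repeating the analysis for $R=\Delta$, whose low Stiefel--Whitney classes are also forced to vanish by Quillen's ideal, I expect successive Chern classes $c_{i}(\Delta_{\bC})$ to realise $2w_{8}$, $2w_{4}w_{8}$, and $2w_{16}$ modulo $D$, exactly paralleling Guillot's computation for $Spin(7)$ recalled in Theorem 9.1, where $cl(c_{2}')=2w_{4}$, $cl(c_{4}')=2w_{8}$, $cl(c_{6}')=2w_{4}w_{8}$ arose by the same mechanism applied to the $8$-dimensional spin representation. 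The class $c_{16}(\Delta_{\bC})$ restricts on the torus to $\eta_{3}^{2}=e^{2}=c_{16}\in D$ by the Benson--Wood identity $\eta_{\ell-1}^{2}=\rho^{*}(c_{2^{h}}(\Delta_{\bC}))$, while the subleading term picks up $2w_{16}$.

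The three remaining elements $2w_{4}w_{16}$, $2w_{8}w_{16}$, $2w_{4}w_{8}w_{16}$, of degrees $20,24,28$, are not accessible from $V_{\bC}$ and $\Delta_{\bC}$ alone, since their Chern classes only give monomials in $w_{4},w_{8}$ or the pure $w_{16}$-term. I would obtain them from the tensor product $V_{\bC}\otimes\Delta_{\bC}$, whose Chern roots are the $9\cdot 16$ sums $\pm t_{a}+\tfrac12\sum\epsilon_{i}t_{i}$ and $\tfrac12\sum\epsilon_{i}t_{i}$ and therefore mix the two weight systems. Expanding the relevant $c_{j}(V_{\bC}\otimes\Delta_{\bC})$ in terms of $c_{i}(V_{\bC})$, $c_{j}(\Delta_{\bC})$ by the splitting principle, and solving modulo previously constructed Chern polynomials, should yield Chern representatives for the three mixed monomials.

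The main obstacle is controlling lower-order corrections. Each candidate Chern polynomial realises its target $2w_{\alpha}$ only modulo $D\cdot 2\Lambda_{\bZ}^{+}$ terms of lower degree together with possible contributions from $Tor$. The $D$-corrections are automatically Chern and so harmless, but torsion contributions cannot in general be absorbed. To exclude them I would invoke the explicit $BP^{*}(BG)\otimes_{BP^{*}}\bZ_{(2)}$ decomposition displayed just above the lemma: the summand $D\otimes 2\Lambda_{\bZ}(w_{4},w_{8},w_{16})^{+}$ is torsion-free and sits as a direct $D$-module summand, so projecting any candidate Chern polynomial onto that summand yields the required clean expression. An induction on total degree, working outward from $2w_{4}=\pm c_{2}(V_{\bC})/\text{unit}$, then closes the argument.
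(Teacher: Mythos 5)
Your reduction to producing each $2w_\alpha$ modulo $D$ and torsion is reasonable, and the congruence $c_k(R_{\bC})\equiv w_k(R)^2\ mod(2)$ is indeed the right starting point for $2w_4$. But there are two genuine gaps. First, the central coefficients are never verified: that $c_4(\Delta_{\bC})$, $c_6(\Delta_{\bC})$, $c_8(\Delta_{\bC})$ equal $2w_8$, $2w_4w_8$, $2w_{16}$ (up to a $2$-adic unit and $D$-decomposables), rather than, say, $4$ times such a monomial, is exactly the nontrivial content of the lemma, and ``I expect'' is all you offer. The paper settles precisely this point by restricting to a circle $S^1\subset T$ (setting $z_i=1$ for $i\ge 2$): there $c(\Delta_{\bC})|_{BS^1}=(1-u^2)^8$, while the Benson--Wood classes give $w_4|_{BS^1}=2u^2$, $w_8|_{BS^1}=2u^4$, $e|_{BS^1}=u^8$, so the needed coefficients (binomial coefficients $\binom{8}{k}$, etc.) can be read off and compared; your argument contains no substitute for this detection step.

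Second, and more seriously, your handling of the mixed monomials $2w_4w_{16}$, $2w_8w_{16}$, $2w_4w_8w_{16}$ fails. The premise that they are ``not accessible from $V_{\bC}$ and $\Delta_{\bC}$ alone'' is false: the higher Chern classes $c_{10}(\Delta_{\bC})$, $c_{12}(\Delta_{\bC})$, $c_{14}(\Delta_{\bC})$ supply exactly these monomials modulo $D$ and torsion --- this is what the paper's computation records (for instance $2w_4w_8w_{16}|_{BS^1}=2(2u^2)(2u^4)u^8=\binom{8}{7}u^{14}$, matching $c_{14}(\Delta_{\bC})$, and Theorem 10.2 exhibits $CH^*(BG)/Tor$ as the free $D$-module on $1,c_2'',\dots,c_{14}''$). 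Worse, your proposed repair cannot work even granting your premise: by the splitting principle the Chern classes of $V_{\bC}\otimes\Delta_{\bC}$ are universal integral polynomials in $c_*(V_{\bC})$ and $c_*(\Delta_{\bC})$, hence lie in the subring those classes already generate, so the tensor product can never produce a class missing from that subring. Finally, ``projecting onto the summand $D\otimes 2\Lambda_{\bZ}(w_4,w_8,w_{16})^+$'' of $BP^*(BG)\otimes_{BP^*}\bZ_{(2)}$ does not keep you inside the Chern subring, so it cannot be used to discard error terms; torsion ambiguity is harmless here only because the statement is read modulo $Tor$, not because of any such projection.
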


Hence $\tilde cl/Tor$ is surjective.  So from Lemma 4.1,
we have

\begin{thm}  We have the isomorphism
\[ CH^*(BG)/(Tor)\cong (BP^*(BG)\otimes_{BP^*}\bZ_{(2)})/(Tor)\]
\[ \cong D\{1,c_2'',c_4'',c_6'',
c_8'',c_{10}'',c_{12}'',c_{14}''\}\]
where $c_i$ (resp. $c_j''$) is the Chern class of
the usual (resp. complex spin) representation.\end{thm}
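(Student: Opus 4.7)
The plan is to combine Lemma 9.1 with Lemma 4.1 to obtain the first isomorphism, and then match generators via a degree count to obtain the second. Recall that Lemma 4.1 asserts: if $\bar{cl}\bmod Tor$ is surjective, then
\[ CH^*(BG_k)/Tor \cong (BP^*(BG)\otimes_{BP^*}\bZ_{(2)})/Tor.\]
From the explicit description of $BP^*(BG)\otimes_{BP^*}\bZ_{(2)}$ given just before Lemma 9.1, the torsion-free part is the free $D$-module
\[ D\{1\}\oplus D\otimes 2\Lambda_\bZ(w_4,w_8,w_{16})^+ \]
of rank $8$. The coefficient ring $D=\bZ_{(2)}[c_4,c_6,c_8,c_{16}]$ is generated by Chern classes (of the representations $Spin(9)\to SO(9)\to U(9)$ and $\Delta_\bC$), hence lies in $\Img(\bar{cl})$. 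Lemma 9.1 supplies lifts for each of the seven elements $2w_I$ ($\emptyset\neq I\subset\{4,8,16\}$), so $\bar{cl}/Tor$ is surjective and Lemma 4.1 yields the first isomorphism.

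For the second isomorphism, I would match the eight $D$-basis elements
\[ 1,\ 2w_4,\ 2w_8,\ 2w_4w_8,\ 2w_{16},\ 2w_4w_{16},\ 2w_8w_{16},\ 2w_4w_8w_{16}\]
against the Chern classes $1,c_2'',c_4'',c_6'',c_8'',c_{10}'',c_{12}'',c_{14}''$ of the $8$-dimensional complex spin representation $\Delta_\bC$. The correspondence is forced by degree (each $c_{2i}''$ has degree $4i$, and the seven monomials in $2\Lambda_\bZ^+$ have distinct degrees $8,16,24,32,40,48,56$), with $c_{16}''=w_{16}^2=c_{16}\in D$ accounting for the top Chern class. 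To confirm that $c_{2i}''$ actually equals the claimed basis element (modulo torsion and modulo $D^+$ times lower generators), I would reduce mod $2$ and identify the image of $c_{2i}''$ under the cycle map with the corresponding Stiefel–Whitney class $w_{2i}(\Delta)$, then use the Quillen/Kono description of $H^*(BG;\bZ/2)/J$ to see that the only free generators in that degree are the listed $w_I$-monomials.

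The main obstacle is \emph{Lemma 9.1 itself}: producing, for each nonempty $I\subset\{4,8,16\}$, an explicit element of $CH^*(BG_k)$ whose image in $BP^*(BG)\otimes_{BP^*}\bZ_{(2)}$ equals $2w_I$ modulo torsion. Here one expects to use Chern classes of $\Delta_\bC$ together with the elementary symmetric functions coming from $U(9)$: the relation $cl(c_{2i}'')=w_{2i}(\Delta)$ gives $cl\bigl(2c_{2i}''\bigr)$-type identities, but lifting $2w_I$ for products $I$ with $|I|\geq 2$ requires writing each such product as a polynomial in the $c_{2j}''$ modulo decomposables from $D$, which is the non-trivial step. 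Once Lemma 9.1 is in hand, the remainder of the proof of Theorem 9.2 is routine bookkeeping: apply Lemma 4.1 for the first isomorphism and use degree-by-degree matching for the second.
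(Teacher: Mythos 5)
Your proposal follows essentially the same route as the paper: the paper's proof of this theorem is exactly ``Lemma 10.1 shows $\bar{cl}/Tor$ is surjective, hence Lemma 4.1 applies,'' with the torsion-free part of $BP^*(BG)\otimes_{BP^*}\bZ_{(2)}$ read off from the Atiyah--Hirzebruch computation and the basis elements $2w_4,2w_8,\dots,2w_4w_8w_{16}$ identified with the Chern classes $c_{2}'',\dots,c_{14}''$ of $\Delta_{\bC}$ (which the paper's proof of Lemma 10.1 realizes by restricting to $BS^1$, the step you correctly flag as the real content). Only minor slips: $\Delta_{\bC}$ for $Spin(9)$ is $16$-dimensional (so $c_{16}''$ is the top Chern class, as you note), and the degrees of the seven monomials are $4,8,12,16,20,24,28$ rather than the doubled values you list, though the one-to-one matching by degree is unaffected.
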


Let us write by $Grf\subset CH^*(BG)$ be the
ideal of  Griffiths elements, that is
$Grf=Ker (cl: CH^*(BG)\to H^*(BG)).$

\begin{cor}  We have 
$ Tor/Grf\cong  D/2[c_7]\{c_7\}.$
\end{cor}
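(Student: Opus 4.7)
The plan is to combine the explicit decomposition of $CH^*(BG)\cong BP^*(BG)\otimes_{BP^*}\bZ_{(2)}$ provided by the preceding theorem with a summand-by-summand analysis of the cycle map $cl=\rho\circ\bar{cl}$. First I would read off from the displayed formula that
\[
\mathrm{Tor}(CH^*(BG))\cong D/2\{v_1w_8,\,v_1w_{16},\,v_1w_8w_{16},\,v_2w_{16}\}\oplus D/2[c_7]\{c_7\},
\]
and that the complementary torsion-free summand $D\{1,c_2'',c_4'',c_6'',c_8'',c_{10}'',c_{12}'',c_{14}''\}$ is sent by $\bar{cl}$ onto $D\oplus D\otimes 2\Lambda_\bZ(w_4,w_8,w_{16})^+\subset H^*(BG)/\mathrm{Tor}$. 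Since no $v_i$ appears in this image, $\rho$ acts as the identity there and $cl$ is injective on $CH^*(BG)/\mathrm{Tor}$; hence $\mathrm{Grf}\subseteq\mathrm{Tor}$, and the problem reduces to identifying the kernel of $cl$ restricted to $\mathrm{Tor}$.

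Next I would treat the two torsion summands in turn. The four elements $v_1w_8$, $v_1w_{16}$, $v_1w_8w_{16}$, $v_2w_{16}$ represent classes in positive $v$-filtration of the Atiyah--Hirzebruch spectral sequence for $BP^*(BG)$ and are therefore annihilated by the edge homomorphism $\rho\colon BP^*(BG)\to H^*(BG)$; multiplication by elements of $D$ preserves the $v$-filtration, so the whole summand $D/2\{v_1w_8,v_1w_{16},v_1w_8w_{16},v_2w_{16}\}$ lies in $\mathrm{Grf}$. On the other summand, $c_7\in CH^7(BG)$ is the Chern class coming from the complex representation $Spin(9)\to SO(9)\to U(9)$, so by the identity $cl(c_i)=w_i^2$ one has $cl(c_7)=w_7^2\in H^{14}(BG)$ and, more generally, $cl(c_4^ac_6^bc_8^cc_{16}^dc_7^e)=c_4^ac_6^bc_8^cc_{16}^dw_7^{2e}$, a nonzero element of $D/2\otimes\bZ/2[w_7]^+\subset\mathrm{Tor}(H^*(BG))$ by Kono's description recalled at the start of this section. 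Hence $cl$ is injective on $D/2[c_7]\{c_7\}$.

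Putting these two facts together, $\mathrm{Grf}=D/2\{v_1w_8,v_1w_{16},v_1w_8w_{16},v_2w_{16}\}$ and consequently $\mathrm{Tor}/\mathrm{Grf}\cong D/2[c_7]\{c_7\}$, as claimed. The main obstacle in executing this plan is the injectivity of $cl$ on the $D/2[c_7]\{c_7\}$-summand: one must verify that the monomials $c_4^ac_6^bc_8^cc_{16}^dw_7^{2e}$ with $e\ge 1$ are linearly independent in $H^*(BG)$, with no hidden relations between the Chern classes and the $w_7$-tower. This is exactly what Kono's computation provides, so once that input is in hand the remainder is a direct matching of summands.
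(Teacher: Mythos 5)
Your argument has a genuine gap at its first step: you read off $\mathrm{Tor}(CH^*(BG))$, and then $Grf$, from the displayed decomposition of $BP^*(BG)\otimes_{BP^*}\bZ_{(2)}$, i.e.\ you assume $CH^*(BG)\cong BP^*(BG)\otimes_{BP^*}\bZ_{(2)}$. The preceding theorem only gives this isomorphism \emph{modulo torsion}; the full isomorphism is Totaro's conjecture and is not established for $Spin(9)$ (unlike $Spin(7)$, where Guillot's theorem applies). Indeed the Remark immediately following the corollary states that one cannot decide whether $v_1w_{16},v_2w_{16}$ lie in $CH^*(BG)$ at all, i.e.\ whether $\bar cl$ is surjective. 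Consequently your identifications $\mathrm{Tor}(CH^*(BG))\cong D/2\{v_1w_8,v_1w_{16},v_1w_8w_{16},v_2w_{16}\}\oplus D/2[c_7]\{c_7\}$ and $Grf\cong D/2\{v_1w_8,v_1w_{16},v_1w_8w_{16},v_2w_{16}\}$ are unsupported; the paper only asserts $v_1w_8\in Grf$, and the exact structure of $Grf$ is left open.

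The corollary does not require determining $Tor$ or $Grf$ explicitly. Since $cl=\rho\circ\bar cl$ and $\bar cl$ carries torsion to torsion, $cl(Tor)$ is contained in $\rho$ applied to the torsion of $BP^*(BG)\otimes_{BP^*}\bZ_{(2)}$, which by the spectral sequence computation is $D/2\{v_1w_8,v_1w_{16},v_1w_8w_{16},v_2w_{16}\}\oplus D/2[c_7]\{c_7\}$; the four $v_i$-classes sit in positive filtration excess and die under $\rho$, so $cl(Tor)\subseteq D/2[c_7]\{c_7\}$ inside $Tor(H^*(BG))$. Conversely, $c_7$ is a $2$-torsion Chern class in $CH^*(BG_k)$ (e.g.\ via Lemma 7.10), so every product $c_7\cdot c_4^ac_6^bc_8^c(c_{16}'')^d$ is a torsion Chow class, and by Kono's description $Tor(H^*(BG))\cong D\otimes\bZ/2[w_7]^+$ these map injectively onto $D/2[w_7^2]\{w_7^2\}$ --- this last point is the part of your write-up that is correct and is exactly the input needed. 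Together these give $Tor/Grf\cong cl(Tor)\cong D/2[c_7]\{c_7\}$. As written, your summand-by-summand plan proves a stronger statement (an explicit $Grf$) from a premise the paper does not have.
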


{\bf Remark.}
Note that $v_1w_8\in Grf$, but we can not see
$v_1w_{16},v_2w_{16}$ are in $CH^*(BG)$ or not, 
 i.e., we do not see $\bar cl$ is surjective or not.

To prove the above lemma, we recall the complex representation ring
\[R(Spin(2\ell+1))\cong 
\bZ[\lambda_1,...,\lambda_{\ell-1},
\Delta_{\bC}] \]
Here $\lambda_i$ is the $i$-th elementary symmetric function in variables $z_1^2+z_1^{-2}$,...,$z_{\ell}^2+z_{\ell}^{-2}$ in $R(T)\cong \bZ[z_1,
z_1^{-1},...,z_{\ell},z_{\ell}^{-1}]$ for the maximal torus $T$.
The representation $\Delta_{\bC}$ is defined
\[ \sum z_1^{\epsilon_1}...z_{\ell}^{\epsilon_{\ell}}\quad 
\epsilon_{i}=1\ or\ -1.\]

Consider the restriction $R(S^1)\cong \bZ[z_1,z_1^{-1}]$
(i.e., $z_i=1$ for $i\ge 2$).  
Since
\[\lambda_1=z_1^2+z_1^{-2}+...+z_{4}^2+z_{4}^{-2},\quad so\ \ \lambda_1|{S^1}=z_1^2+z_1^{-2}+6.\]
 Thus for $ H^*(BS^1)\cong \bZ[u],\ |u|=2,$  we have
\[Res_{BS^1}(c(\lambda_1))
=(1-2u)(1+2u)=1-4u^2.\] 
From this 
we see $c_i|_{S^1}=c_i(\lambda_1)|_{S^1}=0$ for $i>2$.
Note $Res_{S^1}(w_4)=0$ in $H^*(BS^1;\bZ/2)
$, but
$w_4$ is not represented by Chern  
(in fact, it does not exist in $BP^*(BG)$.).
Using this, we can see
\[
Res_{BS^1}(w_4)=2u^2\quad
and\ so \quad Res_{BS^1}(2w_4)=4u^2\]
which  is represented by Chern classes.

\begin{proof}[Proof of Lemma 10.1.]
We consider the Chern classes $c_i(\Delta_{\bC})|_{BS^1}$.
Consider the restriction
$ \Delta_{\bC}|_{S^1}=2^{3}( z_1+z_1^{-1})$.
Hence 
\[Res_{BS^1}(c(\Delta_{\bC}))=(1-u^2)^{8}=
1-\binom{8}{1}u^2+\binom{8}{2}u^4+...+u^{16}.\]

Recall $q_3|_{BS^1}=w_4|_{BS^1}=2u^2$.  Since $q_4^2=2q_{3}$,  we see $w_8|_{BS^1}=q_4|_{BS^1}=2u^4$.
We also know $e|_{BS^1}=u^4$ (in fact $e=w_{16}$ is defined 
using $\Delta$).
Therefore $2w_8|_{BS^1}=4u^4$ and  $2e|_{BS^1}=2u^{8}$ are represented by Chern classes.  Similarly we can see that each element in
$2\Lambda_{\bZ}(w_4,w_8,w_{16})$ is represented by Chern class.  For example
\[ 2w_4w_8w_{16}|_{S^1}=2(2u^2)(2u^4)u^8=2^3u^{14}=\binom{8}{7}u^{14}\]
which is represented by a Chern class.
\end{proof}

  Let $G=Spin(9)$ and $\bG$ be versal.
The Chow ring of the flag variety is given in $\S 6$ and
\[Ker(j^*(\bG))=(c_2^2,c_2c_3,c_3^2,e_8,c_4)\subset S(t)/2,\]
The Chow ring of $BG$ is still unknown.  But
we see from the preceding theorem
$CH^*(BG_k)/Tor \cong D\{1,c_2'',c_4'',c_6'',c_8'',c_{10}'',
c_{12}'',c_{14}''\}.$
Since $i^*(c_2'')=2w_4,i^*(c_4'')=2w_8,...$, we see
Conjecture 6.7 in the preceding section
\begin{thm}  Let $G=Spin(9)$.  Then for 
$D=\bZ_{(2)}[c_4,c_6,c_8,c_{16}'']$,  we have
\[ D/2\cong Im(i^*/2: CH^*(BG_k)\to CH^*(BB_k)/2).\]
\end{thm}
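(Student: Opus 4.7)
The plan is to exploit the explicit description of $CH^*(BG_k)/Tor$ provided by Theorem 10.2 and reduce the problem to evaluating $i^*$ mod $2$ on a short list of generators. First I note that $CH^*(BB_k)\cong S(t)$ is a polynomial ring, in particular $2$-torsion free, so every torsion class in $CH^*(BG_k)$ maps to $0$; consequently $\Img(i^*/2)$ coincides with the image of $CH^*(BG_k)/Tor$ reduced modulo $2$. By Theorem 10.2 this quotient is a free $D$-module on the basis $\{1,c_2'',c_4'',\dots,c_{14}''\}$, so the question reduces to computing $i^*/2$ on the four polynomial generators $c_4,c_6,c_8,c_{16}''$ of $D$ and on the seven spin Chern classes $c_{2j}''$ with $1\le j\le 7$.

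For the generators of $D$ I would apply Lemma 8.2 with $\ell=h=4$ and $t=2$ (the inequality $2^t\le\ell<2^{t+1}$ forces this choice). It yields $i^*(c_4)=c_2^2$, $i^*(c_6)=c_3^2$, $i^*(c_8)=c_4^2$, and
\[ i^*(c_{16}'')=e_{2^{t+1}}^{\,2^{h-t-1}}=(c_1^{8})^{2}=c_1^{16}, \]
where $c_i$ on the right denotes the $i$-th elementary symmetric function in $t_1,\dots,t_4\in S(t)$. The elements $c_1^2,c_2^2,c_3^2,c_4^2$ are the Frobenius images of the algebraically independent $c_1,\dots,c_4$, so they remain algebraically independent in $S(t)/2$; replacing $c_1^2$ by its eighth power $c_1^{16}$ preserves this independence, so the subring $\bZ/2[c_2^2,c_3^2,c_4^2,c_1^{16}]$ of $S(t)/2$ is abstractly isomorphic to $D/2$ via $c_4\mapsto c_2^2$, $c_6\mapsto c_3^2$, $c_8\mapsto c_4^2$, $c_{16}''\mapsto c_1^{16}$.

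To dispose of the spin Chern classes $c_{2j}''$ I would use the character of $\Delta_{\bC}$ recalled in Section~10: the restriction of $\Delta_{\bC}$ to the maximal torus splits into the $16$ one-dimensional characters with weights $\epsilon_1 t_1+\epsilon_2 t_2+\epsilon_3 t_3+\epsilon_4 t_4$ for $\epsilon\in\{\pm 1\}^4$. Pairing each weight $v$ with its negative $-v$ gives
\[ i^*\bigl(c(\Delta_{\bC})\bigr)=\prod_{\epsilon\in\{\pm 1\}^3}\bigl(1-v_\epsilon^{\,2}\bigr),\qquad v_\epsilon=t_1+\epsilon_2 t_2+\epsilon_3 t_3+\epsilon_4 t_4. \]
Modulo $2$ every cross term $2t_it_j$ vanishes, so $v_\epsilon^{\,2}\equiv A:=t_1^2+t_2^2+t_3^2+t_4^2$ uniformly in $\epsilon$, whence
\[ i^*\bigl(c(\Delta_{\bC})\bigr)\equiv (1+A)^{8}\equiv 1+A^{8}\pmod 2, \]
because $\binom{8}{j}$ is even for $1\le j\le 7$. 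This forces $i^*(c_{2j}'')\equiv 0$ for $1\le j\le 7$, while $i^*(c_{16}'')\equiv A^{8}=c_1^{16}$ reconfirms the value from Lemma 8.2.

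Combining these ingredients yields $\Img(i^*/2)=\bZ/2[c_2^2,c_3^2,c_4^2,c_1^{16}]\cong D/2$, which is the claim. The step that requires most care is the weight description of $\Delta_{\bC}|_T$ together with the $\binom{8}{j}$ divisibility; both can be cross-checked against the restriction $\Res_{BS^1}\bigl(c(\Delta_{\bC})\bigr)=(1-u^2)^{8}$ already computed in Lemma 10.1.
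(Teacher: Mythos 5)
Your overall strategy -- reduce modulo torsion, invoke Theorem 10.2, evaluate $i^*$ on the polynomial generators of $D$, and kill the seven spin classes mod $2$ -- is the same as the paper's, and your first reduction and the appeal to Lemma 8.2 are legitimate. The genuine gap is in your treatment of the classes $c_{2j}''$. The weights of $\Delta_{\bC}|_T$ are $\tfrac12(\pm t_1\pm t_2\pm t_3\pm t_4)$, not $\pm t_1\pm\cdots\pm t_4$: a representation with the latter weights would factor through $SO(9)$, which the spin representation does not, and the very cross-check you cite exposes this -- in Lemma 10.1 the standard representation restricts to the circle with weights $\pm 2u$ (whence $\Res_{BS^1}(c(\lambda_1))=(1-2u)(1+2u)$) while $\Delta_{\bC}$ restricts with weights $\pm u$, i.e.\ the halved ones. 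With the correct half-weights your uniformity claim collapses: writing $\gamma=\tfrac12(t_1+\cdots+t_4)$, the eight weights with $\epsilon_1=+1$ reduce mod $2$ to the eight \emph{distinct} linear forms $\gamma+v$, $v\in\langle t_2,t_3,t_4\rangle$, so their squares are not all congruent to one class $A$. By the Dickson identity $\prod_{v}(1+\gamma+v)\equiv 1+d_2+d_1+d_0+\eta$ (with $d_i$ the Dickson invariants of $\bZ/2[t_2,t_3,t_4]$ and $\eta$ the degree-$16$ Benson--Wood/Euler-type class), hence $i^*c(\Delta_{\bC})\equiv 1+d_2^2+d_1^2+d_0^2+\eta^2$ mod $2$: the Chern classes $c_8(\Delta_{\bC})$, $c_{12}(\Delta_{\bC})$, $c_{14}(\Delta_{\bC})$ do \emph{not} restrict to zero mod $2$, so the statement you need, $i^*(c_{2j}'')\equiv 0$ for $1\le j\le 7$, is not established and is false for the literal Chern classes of $\Delta_{\bC}$. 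Relatedly, $c_1=t_1+\cdots+t_4=2\gamma$ vanishes in $CH^*(BB_k)/2$ for the spin Borel, so your value $i^*(c_{16}'')=c_1^{16}$ cannot be the intended nonzero class; the correct restriction is $\eta^2$.

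The paper obtains the crucial vanishing by a different mechanism: the $D$-module generators $c_2'',\dots,c_{14}''$ of Theorem 10.2 are, via Lemma 10.1 and the $BP^*$-computation, classes representing the $2$-divisible elements $2w_4,2w_8,2w_4w_8,2w_{16},\dots$ of $H^*(BG)/Tor$ (they are only ``sums of products of Chern classes''), so their restrictions automatically lie in $2\,CH^*(BB_k)$ and die mod $2$. If you insist on working with the actual Chern classes of $\Delta_{\bC}$, you must additionally verify that the nonzero restrictions $d_2^2,d_1^2,d_0^2$ already lie in the subring generated by $i^*(c_4),i^*(c_6),i^*(c_8),i^*(c_{16}'')$, which your proposal does not address. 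A further caution: your Lemma 8.2 values $i^*(c_8)=c_4^2$ and $i^*(c_{16}'')=c_1^{16}$ disagree with the list the paper prints right after the theorem ($c_8\mapsto e_8$, $c_{16}''\mapsto c_4^4$); the algebraic-independence step should be run with corrected images (for instance $c_2^2,c_3^2,c_4^2,\eta^2$), for which it does go through.
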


We can see the map $i^*$ is given 
\[ c_4\mapsto c_2^2,\ \  c_6\mapsto c_3^2,\ \
 c_8\mapsto e_8, \ \  c_{16}''\mapsto (c_4)^4, \] 
\[ c_2''\mapsto 2c_2,\ \ c_4''\mapsto 2c_4,\ \
c_6''\mapsto 2c_2c_4,\ \ 
 c_8''\mapsto 2c_4^2,\ \ c_{10}''\mapsto 2c_2c_4^2,\ \
\] \[
c_{12}''\mapsto 2c_4e_8, \ \ c_{14}''\mapsto
2c_2c_4e_8.\]
Here $c_i''=c_i(\Delta_{\bC})$ for the complex spin representation.  
 
From Theorem 10.2, we have 
\begin{prop}  Let $G=Spin(9)$ and $\bG$ be versal,
Then we have
 \[\tilde D_{CH/2}(\bG)= \tilde D(\bG)\cong (\bZ/2\{1,
c_2c_3\}\otimes \bZ/2[c_4]/(c_4^4))^+\otimes
S(t,c).\]
\end{prop}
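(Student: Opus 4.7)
The plan follows the template of Proposition 9.1: identify the ideals $\mathrm{Ker}(j^+(\bG))$ and $\mathrm{Ideal}(\mathrm{Im}(i^+))$ in $S(t)/2 = CH^*(BB_k)/2$, invoke Corollary 3.5 to get the decomposition $D(\bG) \cong \tilde D(\bG) \otimes S(t,c)$, and compute $\tilde D(\bG)$ as an ideal quotient inside $A(b) = \bZ/2[c_2, c_3, c_4, e_8]$.

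Since $\mathrm{Spin}(9)$ is of type $(I)$ with $\ell = 4 \ge 2p-2 = 2$ and with transgressions $b_1 = c_2,\ b_2 = c_3,\ b_3 = c_4,\ b_4 = e_8$ (in the notation of \S 7, where $h=4$, $t=2$, $\bar\ell=3$), Theorem 2.3 applied to $\mathrm{Spin}(9)$ gives
\[
\mathrm{Ker}(j^+(\bG)) = (c_2^2,\, c_2 c_3,\, c_3^2,\, c_4,\, e_8) \subset S(t)/2.
\]
From Theorem 10.2 together with the displayed formulas for $i^*$, only the generators $c_4,\, c_6,\, c_8,\, c_{16}''$ of the polynomial factor $D = \bZ_{(2)}[c_4, c_6, c_8, c_{16}'']$ contribute mod $2$, yielding
\[
\mathrm{Ideal}(\mathrm{Im}(i^+/2)) = (c_2^2,\, c_3^2,\, e_8,\, c_4^4) \subset S(t)/2,
\]
since every $c_{2i}''$ maps to $2$ times a monomial in $A(b)$ and hence vanishes mod $2$.

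Both ideals are generated by elements of $A(b)$, so the hypothesis $\mathrm{Im}(i^*) \subset A(b)$ of Corollary 3.5 holds, while the supposition of Lemma 2.2 follows from the explicit kernel generators above. Corollary 3.5 thus produces the additive decomposition $D(\bG) \cong \tilde D(\bG) \otimes S(t,c)$, where
\[
\tilde D(\bG) \cong (c_2^2,\, c_2 c_3,\, c_3^2,\, c_4,\, e_8)_{A(b)} \big/ (c_2^2,\, c_3^2,\, e_8,\, c_4^4)_{A(b)}.
\]
Passing to the finite ambient ring $A(b)/(c_2^2, c_3^2, e_8, c_4^4) \cong \bZ/2[c_2, c_3]/(c_2^2, c_3^2) \otimes \bZ/2[c_4]/(c_4^4)$, this quotient is the ideal generated by the images of $c_2 c_3$ and $c_4$. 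A direct tally of monomials, after absorbing stray single-$c_2$ or single-$c_3$ factors into the $S(t,c)$-tensor factor via the Lemma 3.2 decomposition $gr\,S(t)/2 \cong A(b) \otimes S(t)/(b)$, identifies $\tilde D(\bG)$ with $(\bZ/2\{1, c_2 c_3\} \otimes \bZ/2[c_4]/(c_4^4))^+$.

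The main obstacle is this last absorption step: the naive ideal $(c_2 c_3, c_4)$ in $\bZ/2[c_2, c_3, c_4]/(c_2^2, c_3^2, c_4^4)$ contains additional monomials $c_2 c_4^j$ and $c_3 c_4^j$ ($j = 1, 2, 3$) that superficially look like extra generators of $\tilde D(\bG)$. One must verify that under the isomorphism $D(\bG) \cong \tilde D(\bG) \otimes S(t,c)$ these monomials contribute to pairings of the form $(\tilde D\text{-generator}) \otimes (\text{nontrivial class in } S(t,c))$ rather than lying in $\tilde D(\bG) \otimes \bZ/2\{1\}$, so that the core module $\tilde D(\bG)$ itself is spanned exactly by the seven basis elements $c_4,\ c_4^2,\ c_4^3,\ c_2 c_3,\ c_2 c_3 c_4,\ c_2 c_3 c_4^2,\ c_2 c_3 c_4^3$.
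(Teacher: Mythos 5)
Your reduction of the problem is the same as the paper's: you take $Ker(j^+(\bG))=(c_2^2,c_2c_3,c_3^2,c_4,e_8)$ from Theorem 2.4 via the transgressions $c_2,c_3,c_4,e_8$ of Section 8, you take $Ideal(Im(i^+/2))=(c_2^2,c_3^2,e_8,c_4^4)$ from Theorem 10.4 together with the displayed values of $i^*$, and you then read off $D(\bG)$ as a quotient of these two ideals through Lemma 3.2 and Corollary 3.5. Up to that point you are following the paper's (very terse) derivation faithfully.

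The final ``absorption'' step, however, is a genuine gap, and it cannot be closed in the way you propose. In the decomposition $gr\, S(t)/2\cong A(b)\otimes S(t)/(b)$ of Lemma 3.2 the monomials $c_2c_4^j$ and $c_3c_4^j$ lie in $A(b)\otimes 1$, because $c_2$ and $c_3$ are themselves among the $b_i$; they vanish in $S(t,c)=S(t)/(c_2,c_3,c_4,e_8)$, so there is no rewriting of $c_2c_4^j$ as a core generator tensored with a positive-degree class of $S(t,c)$. Moreover these monomials are honestly nonzero in $Ker(j^+)/Ideal(Im(i^+))$: since $c_2,c_3,c_4,e_8$ form a regular sequence, membership in $(c_2^2,c_3^2,e_8,c_4^4)$ can be tested inside $A(b)=\bZ/2[c_2,c_3,c_4,e_8]$, where this is a monomial ideal containing neither $c_2c_4^j$ nor $c_3c_4^j$. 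Hence, with exactly the inputs you (and the paper) use, the computation yields the larger core, namely the ideal generated by $c_2c_3$ and $c_4$ in $\bZ/2[c_2,c_3]/(c_2^2,c_3^2)\otimes\bZ/2[c_4]/(c_4^4)$ (thirteen monomials), not the seven-element module in the display. To arrive at the stated answer one would need an extra argument placing $c_2c_4^j$ and $c_3c_4^j$ in $Ideal(Im(i^+/2))$, which Theorem 10.4 forbids mod $2$ (the image is exactly $\bZ/2[c_2^2,c_3^2,e_8,c_4^4]$), or some relation not recorded in the paper; the same tension already occurs in Proposition 9.2 for $Spin(7)$, where $c_2e_4$ and $c_3e_4$ survive the analogous quotient but are absent from $\Lambda(c_2c_3,e_4)^+$. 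So the discrepancy you flagged is real and is not removable by re-bookkeeping the tensor factors; your proposal does not complete the proof of the statement as displayed.
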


\section{The ordinary cohomology for $F_4$　}

In this and next sections, we assume $(G,p)=(F_4,3)$.
For ease of notation,  the classifying space 
$BG$ means the topological space $BG(\bC)$
(or the variety $BG_{\bar k}$).
.
Toda computed the $mod(3)$ cohomology of $BF_4$.
(For details see \cite{Tod1}.)
\begin{thm} (Toda [Tod1])
We have additively \  $ H^*(BG;\bZ/3)\cong C\otimes D,$
\[where \quad    C=F\{1,x_{20},x_{20}^2\}+\bZ/3[x_{26}]\otimes \Lambda(x_9)\otimes \bZ/3 \{1,x_{20},x_{21},x_{26}\}\]
\[ and\ \ \   D=\bZ_{(3)}[x_{36},x_{48}],\quad F=\bZ_{(3)}[x_4,x_8]. \]
Here  the suffix means its degree.
\end{thm}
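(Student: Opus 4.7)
The plan is to compute $H^*(BG;\bZ/3)$ via the Serre spectral sequence of the universal fibration $G\to EG\to BG$, starting from Borel's description
\[ H^*(G;\bZ/3)\cong \bZ/3[y_8]/(y_8^3)\otimes \Lambda(x_3,x_7,x_{11},x_{15}). \]
First I would identify the transgressions on the exterior generators. The edge homomorphism gives $\tau(x_3)=x_4\in H^4(BG;\bZ/3)$, and Kudo's transgression theorem combined with the Steenrod relation $\mathcal{P}^1 x_3 = x_7$ (up to scalar) yields $\tau(x_7)=\mathcal{P}^1 x_4 = x_8$. Iterating with $\mathcal{P}^1 x_7=x_{11}$ and $\mathcal{P}^1 x_{11}=x_{15}$ forces $\tau(x_{11})=\mathcal{P}^1 x_8$ and $\tau(x_{15})=(\mathcal{P}^1)^2 x_8$, which are polynomial expressions in $x_4,x_8$. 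This produces the polynomial-type generators of $F=\bZ_{(3)}[x_4,x_8]$ after lifting integrally.

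Second, I would produce the $3$-torsion (``exotic'') classes. The element $y_8\in H^*(G;\bZ/3)$ is not transgressive by itself, but the Kudo relation together with $y_8^3=0$ in the fiber forces a higher differential on a suitable monomial (roughly, $d_{21}$ on a representative involving $y_8^2 x_{15}$) producing the generator $x_{20}$, together with the truncation relation $x_{20}^3 = 0$ in the exotic part; this accounts for the summand $F\{1,x_{20},x_{20}^2\}$. Applying the Milnor operations then yields $x_9=Q_0 x_8$, $x_{21}=Q_0 x_{20}$, and $x_{26}=Q_1 x_{20}$ (recall $|Q_1|=2p-1=5$). The polynomial parts in $x_9$ and $x_{26}$ arise because $Q_0,Q_1$ are derivations and their images in $H^*(BG;\bZ/3)$ lie in the $3$-torsion ideal (they vanish on reduced Chern classes).

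Third, I would recover $D=\bZ_{(3)}[x_{36},x_{48}]$ from the torsion-free part. The Weyl group $W(F_4)$ has fundamental degrees $2,6,8,12$, so that $H^*(BT;\bZ_{(3)})^W\cong \bZ_{(3)}[I_4,I_{12},I_{16},I_{24}]$, and $H^*(BG)/\mathrm{Tor}$ injects into this ring. After reducing mod $3$, the classes $I_4,I_8$-type generators are already captured by $F$; the Kudo relation $\mathcal{P}^3 y_8\equiv y_8^p=0$ and the corresponding behaviour of $\mathcal{P}^3$ on the transgressed invariants $I_{12},I_{16}$ force the surviving polynomial generators to lie in degrees $|I_{12}|\cdot 3=36$ and $|I_{16}|\cdot 3=48$, giving $x_{36},x_{48}$. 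Multiplicativity of the spectral sequence then yields the tensor decomposition $C\otimes D$ additively.

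The main obstacle will be bookkeeping on the $E_\infty$ page: one must show that the higher differentials on all monomials $y_8^{i}x_{j_1}\cdots x_{j_k}$ are exactly the ones dictated by Kudo's theorem and by the $Q_n$-operations, and that no further extensions identify the additive generators listed above. In particular, verifying that the two summands of $C$, namely $F\{1,x_{20},x_{20}^2\}$ and $\bZ/3[x_{26}]\otimes \Lambda(x_9)\otimes \bZ/3\{1,x_{20},x_{21},x_{26}\}$, exhaust all exotic contributions without unintended overlaps requires careful degree-by-degree comparison of dimensions against the abutment predicted by the spectral sequence.
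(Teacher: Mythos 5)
Your route is not the one behind the theorem, and as written it has concrete failures. This statement is quoted from Toda, and the computation it rests on (as the paper recalls immediately after the statement) does not run the Serre spectral sequence of $G\to EG\to BG$: it uses the Cayley-plane fibration $\Pi\to BSpin(9)\to BF_4$ with $\Pi=F_4/Spin(9)$, where the total space has known torsion-free mod $3$ cohomology $\bZ/3[p_1,\dots,p_4]$, combined with the determination of $H^*(BT;\bZ/3)^{W(F_4)}$ (Theorem 11.3) and Steenrod operations. The universal-fibration spectral sequence you propose is precisely the one that is intractable at a torsion prime: Kudo's theorem only governs transgressive classes, and the polynomial generator $y_8$ (and its powers, and the products $y_8^ix_J$) is not transgressive, so the fate of these classes is not forced by the argument you sketch. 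Your specific claim of a $d_{21}$ on a class ``involving $y_8^2x_{15}$'' producing $x_{20}$ is neither degree-consistent (a differential creating a generator of $H^{20}(BG)$ must land in $E_r^{20,0}$) nor justified; this is exactly the bookkeeping you defer, and it is the entire difficulty, not a final check.

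Two further steps are wrong as stated. First, $x_{26}=Q_1x_{20}$ is impossible: $|Q_1|=2p-1=5$, so $Q_1x_{20}$ has degree $25$; the actual relations are $x_{25}=Q_1x_{20}$, $x_{26}=Q_0x_{25}$, and $x_{26}=Q_2Q_1(x_4)$, as used in Section 11. Second, your derivation of $D=\bZ_{(3)}[x_{36},x_{48}]$ assumes $H^*(BT;\bZ_{(3)})^{W}\cong\bZ_{(3)}[I_4,I_{12},I_{16},I_{24}]$; that is the rational (Chevalley) statement, but $3$ is a torsion prime for $F_4$ and the $3$-local invariant ring is not polynomial: by Toda one has $H^*(BT;\bZ/3)^{W}\cong\bZ/3[p_1,\bar p_2,\bar p_5,\bar p_9,\bar p_{12}]/(r_{15})$ with $|\bar p_5|=20$, $|\bar p_9|=36$, $|\bar p_{12}|=48$ (Theorem 11.3), and $H^*(BG)/Tor$ is the non-polynomial ring of Corollary 11.2. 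So the degrees $36$ and $48$ cannot be obtained by the ``$P^3$ on $I_{12},I_{16}$'' mechanism you describe; they arise as $\bar p_9\equiv p_3^3$ and $\bar p_{12}\equiv p_4^3$ modulo $(p_1,\bar p_2)$ in the invariant-theoretic computation. To salvage a proof along honest lines you would need to replace your fibration by Toda's $\Pi\to BSpin(9)\to BF_4$ (or otherwise import the invariant theory), since the additive structure of $C\otimes D$, including the summand $F\{1,x_{20},x_{20}^2\}$ and the $\bZ/3[x_{26}]\otimes\Lambda(x_9)$ torsion part, is established there and not by transgression formalities alone.
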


{\bf Remark.}
The multiplicative structure is also given completely
by Toda \cite{Tod1}, e.g., $x_{21}x_8+x_{20}x_9=0$.

Note  that $H^*(BG)$ has no higher $3$-torsion and
$Q_0x_8=x_9$, $Q_0x_{20}=x_{21}$.
So $x_8,x_{20}\not \in H^*(BG)$.
From $Q_0x_{25}=x_{26}$, we can see
 $x_{26}=Q_2Q_1(x_4)$.  Using these we have 
\begin{cor} ([Tod1], [Ka-Mi])
We have isomorphisms
\[H^*(BT;Z/3)^W\cong H^{even}(BG;\bZ/3)/(Q_2Q_1x_4)
\cong D/3\otimes F\{1,x_{20},x_{20}^2\}.\]
\[ H^*(BT)^W\cong H^*(BG)/Tor\cong D\otimes(\bZ_{(3)}\{1,x_4\}\oplus E)\]
where $D=\bZ_{(3)}[x_{36},x_{48}]$, $F=\bZ_{(3)}[x_4,x_8]$,
and $E=F\{ab|a,b\in \{x_4,x_8,x_{20}\}\}$.
\end{cor}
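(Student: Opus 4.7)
The plan is to deduce Corollary 11.2 from Toda's Theorem 11.1 by combining three structural facts: (a) the Becker-Gottlieb transfer provides the injection $H^*(BG)/Tor\hookrightarrow H^*(BT)^W$ coming from (4.1); (b) by Kono's result (noted just before the corollary) $H^*(BG;\bZ)$ has only $3$-torsion, hence
\[
(H^*(BG)/Tor)\otimes\bZ/3\;\cong\;H(H^*(BG;\bZ/3);Q_0),
\]
with $Q_0$ the integral Bockstein; and (c) the Milnor operations $Q_i$ vanish in positive degree on $H^*(BT;\bZ/3)$ (which is polynomial on degree-$2$ generators), so the restriction $i^*:H^*(BG;\bZ/3)\to H^*(BT;\bZ/3)$ factors through the quotient by any Milnor-image ideal in positive degree.

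For the integral isomorphism, I would compute the $Q_0$-homology of $H^*(BG;\bZ/3)\cong C\otimes D$. The polynomial factor $D=\bZ_{(3)}[x_{36},x_{48}]$ is $Q_0$-acyclic (its generators are reductions of integer classes), so it contributes the factor $D/3$ with no boundaries. On $C$ the relevant $Q_0$-action is $Q_0x_4=0$, $Q_0x_8=x_9$, $Q_0x_{20}=x_{21}$, $Q_0x_9=Q_0x_{21}=0$, and $Q_0x_{26}=0$ by the anti-commutation $Q_0Q_2Q_1=-Q_2Q_1Q_0$. Applying Leibniz, the $Q_0$-cycles in the first summand $F\{1,x_{20},x_{20}^2\}$ form, modulo $Q_0$-boundaries, the $\bZ/3$-span of $\{1,x_4\}$ together with $F\{x_4^2,x_4x_8,x_4x_{20},x_8^2,x_8x_{20},x_{20}^2\}$, while the second summand $\bZ/3[x_{26}]\otimes\Lambda(x_9)\otimes\bZ/3\{1,x_{20},x_{21},x_{26}\}$ contributes only boundaries in $Q_0$-homology after invoking Toda's relation $x_{21}x_8+x_{20}x_9=0$. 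Integer lifts exist because $H^*(BG)$ has no higher $3$-torsion, producing the $\bZ_{(3)}$-module $\bZ_{(3)}\{1,x_4\}\oplus E$; tensoring with $D$ yields the stated description of $H^*(BG)/Tor$. The identification with $H^*(BT)^W$ then follows from the transfer injection combined with a Poincar\'e-series comparison (equivalently, by invoking [Tod1], [Ka-Mi]).

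For the mod $3$ isomorphism, the restriction $i^*:H^*(BG;\bZ/3)\to H^*(BT;\bZ/3)^W$ factors through the even-degree subring (since $H^*(BT;\bZ/3)$ is concentrated in even degrees) and kills $Q_2Q_1x_4=x_{26}$ by observation (c); surjectivity of $i^*$ follows from the absence of higher $3$-torsion together with Kameko-Mimura's computation. To match the target with $D/3\otimes F\{1,x_{20},x_{20}^2\}$, I reduce Theorem 11.1's decomposition modulo $x_{26}$: in the second summand of $C$, the only even-degree classes surviving are $\bZ/3\{1,x_{20},x_9x_{21}\}$, and Toda's relation together with $x_9^2=0$ places $x_9x_{21}$ (of degree $30=4+26$) in the ideal $(x_{26})$. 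What remains is $F\{1,x_{20},x_{20}^2\}$ reduced modulo $3$, tensored with $D/3$.

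The main obstacle is the bookkeeping of the $Q_0$-homology in the second step: tracking cycles and boundaries in the mixed sum defining $C$, and verifying that Toda's multiplicative relations combined with the $Q_0$-action kill precisely the right cross-terms so that $\{1,x_4\}$ together with the six quadratic monomials $\{x_ix_j:i,j\in\{x_4,x_8,x_{20}\}\}$ form an $F$-basis of the $Q_0$-homology of $C$ modulo boundaries, with no hidden cycles contributed by the $x_{26}$-summand.
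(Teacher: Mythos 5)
The paper itself gives no proof of Corollary 11.2: it is quoted from [Tod1] and [Ka-Mi], and the two sentences preceding it (no higher $3$-torsion, $Q_0x_8=x_9$, $Q_0x_{20}=x_{21}$, $Q_0x_{25}=x_{26}$, $x_{26}=Q_2Q_1x_4$) are the paper's indication of the ingredients. Your plan (transfer injection, $Q_0$-homology via $(H^*(BG)/Tor)\otimes\bZ/3\cong H(H^*(BG;\bZ/3);Q_0)$, then identification with the invariant ring) is in the same spirit, but the central computation has a genuine gap.

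Your $Q_0$-bookkeeping uses only $Q_0x_8=x_9$, $Q_0x_{20}=x_{21}$ and $Q_0x_4=Q_0x_9=Q_0x_{21}=Q_0x_{26}=0$, and you claim the summand $\bZ/3[x_{26}]\otimes\Lambda(x_9)\otimes\bZ/3\{1,x_{20},x_{21},x_{26}\}$ contributes only boundaries ``after invoking $x_{21}x_8+x_{20}x_9=0$''. That relation only shows that $x_8x_{20}$ is a $Q_0$-cycle; it cannot dispose of the $x_{26}$-tower. With the operations you list, $x_{26}^a$ and $x_9x_{26}^a$ are $Q_0$-cycles that are never boundaries (the summand $F\{1,x_{20},x_{20}^2\}$ is concentrated in even degrees, so its $Q_0$-image is odd, and no odd class in the second summand maps onto $x_{26}^a$), so your computation would leave these classes alive in $(H^*(BG)/Tor)\otimes\bZ/3$; this already contradicts the asserted answer in degree $26$, where $D\otimes(\bZ_{(3)}\{1,x_4\}\oplus E)$ vanishes since every generator has degree divisible by $4$. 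What actually kills them is the odd class $x_{25}$ with $Q_0x_{25}=x_{26}$ (so $Q_0(x_{25}x_{26}^{a-1})=x_{26}^a$, $Q_0(x_9x_{25}x_{26}^{a-1})=\pm x_9x_{26}^a$), which the paper states immediately before the corollary and which never appears in your argument. The same omission infects your mod $3$ step: the claim that $x_9x_{21}$ lies in the ideal $(x_{26})$ does not follow from $x_{21}x_8+x_{20}x_9=0$ and $x_9^2=0$ (multiplying the relation by $x_9$ only yields $x_8x_9x_{21}=0$); it requires Toda's multiplicative relations involving $x_{25},x_{26}$. Finally, the identification with $H^*(BT)^W$, i.e.\ surjectivity of restriction onto the Weyl invariants and the matching of sizes, is not produced by the transfer or by a ``Poincar\'e-series comparison'' unless the invariant ring has been computed independently; that is exactly the content of Toda's Theorem 11.3 and Corollary 11.4 (and of [Ka-Mi]), so at that point your argument is a citation of the result being proved rather than a proof. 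Since the paper treats the corollary as a quoted result, citing [Tod1], [Ka-Mi] there is acceptable, but the $Q_0$-homology part as written would not yield the stated groups.
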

Note that
$E\oplus\bZ_{(3)}\{1,x_4,x_8,x_{20}\} \cong \bZ_{(3)}[x_4,x_8,x_{20}]/(x_{20}^3)$.

To show the above theorem, Toda uses the following 
fibering 
\[\Pi \to BSpin(9)\to BF_4\]
where $\Pi=F_4/Spin(9)$ is the Cayley plane.
Let $T$ be the maximal torus of $Spin(9)\subset F_4$,
and $W(G)$ be the Weyl group of $G$.
Let us write $H^*(BT;\bZ/3)\cong
\bZ/3[t_1,...,t_4]$.  It is well known
\[ H^*(BSpin(9);\bZ/3)\cong H^*(BT;\bZ/3)^{W(Spin(9))}\cong
\bZ/3[p_1,...,p_4]\]
where $p_i$ is the $i$-th Pontrjagin class 
which is the $i$-th elementary symmetric function on variable $t_j^2$.  The Weyl group $W(F_4)$
is generated by elements in $W(Spin(9))$
and by $R$ with $R(u_i)=u_i-(u_1+...+u_4)$.  The invariant ring of
$G=F_4$ is also computed by Toda
\begin{thm}  There is a ring isomorphism
\[ H^*(BT;\bZ/3)^{W(G)}
\cong 
\bZ/3[p_1,\bar p_2,\bar p_5,
\bar p_9,\bar p_{12}]/(r_{15})
\subset
 \bZ/3[p_1,...,p_4]\]
\[ where \quad 
\bar p_2=p_2-p_1^2,\ \ \bar p_5=p_4p_1+p_3\bar p_2,\ \ 
\bar p_9=p_3^3\ mod(I), \]
\[\bar p_{12}=p_4^3\ mod(I),\ \ r_{15}=\bar p_5^3,\ \ with \ I=Ideal(p_1,\bar p_2).\]
\end{thm}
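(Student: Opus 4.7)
The plan is to verify the description of $H^*(BT;\bZ/3)^{W(F_4)}$ by working inside the known $W(Spin(9))$-invariant subring $\bZ/3[p_1, p_2, p_3, p_4]$ and imposing the further invariance under the supplementary generator $R$ of $W(F_4)$, using the fact that $[W(F_4):W(Spin(9))] = 3$, so only this one extra symmetry must be accounted for.

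First I would compute the $R$-action on $\bZ/3[p_1, \ldots, p_4]$ explicitly. Substituting $R(u_i) = u_i - (u_1+\cdots+u_4)$ into each elementary symmetric function in the $u_i$ and reducing modulo $3$ yields polynomial formulas $R(p_k) = F_k(p_1, \ldots, p_4)$. From these one checks directly that $p_1$, $\bar p_2 = p_2 - p_1^2$, and $\bar p_5 = p_4 p_1 + p_3 \bar p_2$ are $R$-invariant, producing generators in degrees $4$, $8$, and $20$. For the degree-$36$ and degree-$48$ generators, I would construct invariant refinements of $p_3^3$ and $p_4^3$ modulo the ideal $I = (p_1, \bar p_2)$: because the $R$-action on $p_3^3$ and $p_4^3$ deviates from the identity only by elements of $I$, the leading expressions can be corrected by adding invariant terms in $I$ to obtain honest invariants $\bar p_9$ and $\bar p_{12}$.

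Next, compute $\bar p_5^3$ in $\bZ/3[p_1, \ldots, p_4]$ and express it as a polynomial in $p_1, \bar p_2, \bar p_9, \bar p_{12}$; the resulting identity is the relation $r_{15}$ in degree $60$. To confirm that no further generators or relations are needed, compare Poincar\'e series: the candidate ring $\bZ/3[p_1, \bar p_2, \bar p_5, \bar p_9, \bar p_{12}]/(r_{15})$ has series
\[
\frac{1 - T^{60}}{(1-T^4)(1-T^8)(1-T^{20})(1-T^{36})(1-T^{48})} = \frac{1 + T^{20} + T^{40}}{(1-T^4)(1-T^8)(1-T^{36})(1-T^{48})},
\]
which matches the Poincar\'e series of $H^*(BT;\bZ/3)^{W(F_4)}$ coming from Corollary~11.2 via the additive identification $D/3 \otimes F\{1, x_{20}, x_{20}^2\}$. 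The natural ring homomorphism from the candidate ring into the invariant ring sends generators to invariants with matching degrees; combined with the Poincar\'e series equality, this implies it is an isomorphism.

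The hard part will be the explicit algebraic verification: pinning down $\bar p_9$ and $\bar p_{12}$ as genuine invariants (not merely modulo $I$) and carrying out the expansion of $\bar p_5^3$ to identify $r_{15}$ as a specific polynomial in the new generators. These are finite but intricate computations in $\bZ/3[p_1, \ldots, p_4]$; once established, the structural comparison of Poincar\'e series closes the argument cleanly.
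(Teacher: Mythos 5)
The paper itself gives no proof of this theorem: it is quoted from Toda [Tod1] (the invariant ring is simply cited and then used), so your proposal is being measured as an independent argument. Your starting point is sound: since $W(F_4)$ is generated by $W(Spin(9))$ and the single extra element $R$, it suffices to impose $R$-invariance on $\bZ/3[p_1,\dots ,p_4]$, and checking $p_1$, $\bar p_2$, $\bar p_5$ by direct substitution is fine. But two steps are genuinely incomplete. First, the existence of $\bar p_9$ and $\bar p_{12}$: from the fact that $R(p_3^3)-p_3^3$ and $R(p_4^3)-p_4^3$ lie in $I=(p_1,\bar p_2)$ you cannot conclude that $p_3^3$ and $p_4^3$ ``can be corrected by terms in $I$'' to honest invariants. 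Because the index of $W(Spin(9))$ in $W(F_4)$ is $3=p$, no averaging or transfer argument is available in characteristic $3$; the only route is to exhibit the corrected invariants explicitly (which is exactly what Toda does), and this construction --- which you defer as ``the hard part'' --- is the core of the theorem, not a routine verification.

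Second, the endgame is logically insufficient. A degree-preserving graded ring map together with equality of Poincar\'e series does not imply an isomorphism (a map killing a generator gives a counterexample); you need injectivity or surjectivity. Surjectivity --- that these five elements generate all of $H^*(BT;\bZ/3)^{W(F_4)}$ --- is the substantive half of the statement and is nowhere addressed. Injectivity would require showing the ideal of relations is exactly $(r_{15})$; note $r_{15}$ must be read as $\bar p_5^3-f(p_1,\bar p_2,\bar p_9,\bar p_{12})$ (in characteristic $3$, $\bar p_5^3=p_1^3p_4^3+\bar p_2^3p_3^3$), and one then needs, e.g., that $\bar P_5^3-f$ is irreducible (true since $\bar P_9,\bar P_{12}$ occur linearly in $f$, so $f$ is not a cube) and that the kernel of the presentation is a height-one prime because the image has Krull dimension $4$; none of this appears in your sketch. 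Finally, importing the Poincar\'e series from Corollary 11.2 risks circularity: in the cited sources that identification of $H^*(BT;\bZ/3)^W$ with $H^{even}(BG;\bZ/3)/(Q_2Q_1x_4)$ is obtained with the help of the invariant-ring computation, and Molien's formula is unavailable here since $3$ divides $|W(F_4)|$, so the additive size of the invariant ring must be established independently rather than deduced from 11.2.
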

Let us write $i: T\subset F_4$.  The above elements
correspond even degree generator (except for $x_{26}$).
\begin{cor}
We have
\[ i^*(x_4)=p_1,\ \ i^*(x_8)=\bar p_2,\ \  i^*(x_{20})=\bar p_5,\ \  i^*(x_{36})=\bar p_9,\ \  i^*(x_{48})=\bar p_{12}.\]
\end{cor}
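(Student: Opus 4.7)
The plan is to read the identifications directly from Toda's explicit computations, matching indecomposable generators by degree.

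By Corollary 11.2, the restriction $i^*$ induces an isomorphism $H^*(BG)/Tor \cong H^*(BT)^{W(G)}$, so it must send indecomposables to indecomposables modulo decomposables. Theorem 11.3 shows that $H^*(BT;\bZ/3)^{W(G)}$ is polynomial (modulo the single relation $r_{15}$) on the generators $p_1, \bar p_2, \bar p_5, \bar p_9, \bar p_{12}$ in degrees $4, 8, 20, 36, 48$, while $H^*(BG)/Tor$ has polynomial generators $x_4, x_8, x_{20}, x_{36}, x_{48}$ in exactly the same degrees (Theorem 11.1 and Corollary 11.2). In each of these degrees, the indecomposable part of the invariant ring is one-dimensional over $\bZ_{(3)}$, so $i^*(x_{4k})$ must equal the corresponding listed invariant up to a unit scalar and decomposables.

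To fix the identifications on the nose, one normalizes $x_4$ so that $i^*(x_4) = p_1$; the degree-$8$ invariant $\bar p_2 = p_2 - p_1^2$ is then the unique new generator modulo $(p_1^2)$, and we may choose $x_8$ (modulo $x_4^2$) so that $i^*(x_8) = \bar p_2$. The same scheme works in higher degree: $\bar p_5 = p_4 p_1 + p_3 \bar p_2$ is the unique indecomposable degree-$20$ invariant modulo the already-determined subring, and $\bar p_9 \equiv p_3^3$, $\bar p_{12} \equiv p_4^3$ are defined modulo the ideal $I = (p_1, \bar p_2)$, which matches exactly the indeterminacy in choosing the generators $x_{36}, x_{48}$ of $H^*(BG)/Tor$ modulo products of lower-degree generators.

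The main obstacle is that the ``barred'' polynomial representatives are not canonically determined -- only their classes modulo $I$ and modulo decomposables are -- so one has to check that Toda's specific lifts really match the images $i^*(x_{20}), i^*(x_{36}), i^*(x_{48})$ under a coherent choice of the $x_j$. This compatibility comes essentially for free from Toda's construction in \cite{Tod1}: the Serre spectral sequence of $F_4/Spin(9) \to BSpin(9) \to BF_4$ is used both to produce the generators $x_j$ and to identify the invariants $\bar p_j$, and the restriction $i^*$ (and its factorization through $H^*(BSpin(9);\bZ/3) \cong \bZ/3[p_1,\ldots,p_4]$) is computed simultaneously with these descriptions.
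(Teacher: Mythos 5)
Your proposal is correct and takes essentially the same route as the paper: the paper states this corollary with no independent argument, simply reading the identifications off Toda's computations (Theorem 11.1, Theorem 11.3 and Corollary 11.2), which is exactly your degree-matching of indecomposable generators with the precise normalization of the $x_j$ and the barred invariants deferred to \cite{Tod1}.
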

By using this corollary, we can write the reduced power
actions.
\begin{lemma} (\cite{Tod1})  We have
\[ P^1(x_4)=-x_8+x_1^2,\ \ P^1(x_8)=x_4x_8,\ \
P^1(x_{20})=0,\]
\[ P^3(x_4)=0, \ \  P^3(x_8)=x_{20}-x_4x_8^2,\ \
P^3(x_{20})=x_{20}x_4(-x_8+x_4^2),\]
\[P^3(x_{36})=x_{48}\ mod(x_4,x_8).\]
\end{lemma}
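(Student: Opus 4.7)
The plan is to verify every identity by restricting along $i^*:H^{even}(BG;\bZ/3)\to H^*(BT;\bZ/3)^{W(G)}$. By Corollary 11.2 this restriction is surjective with kernel the principal ideal generated by $x_{26}=Q_2Q_1x_4$. The six asserted formulas live in cohomological degrees $8,12,16,20,24,32$ and $48$, and enumerating even-degree monomials in $x_4,x_8,x_{20},x_{26},x_{36},x_{48}$ one checks that no non-zero multiple of $x_{26}$ occurs in these degrees. Hence $i^*$ is injective in each of these degrees, so each identity is equivalent to one in $\bZ/3[t_1,\dots,t_4]$ after applying Corollary 11.4 to substitute $i^*(x_4)=p_1$, $i^*(x_8)=\bar p_2$, $i^*(x_{20})=\bar p_5$, and $i^*(x_{36})\equiv p_3^3$, $i^*(x_{48})\equiv p_4^3$ modulo $I=(p_1,\bar p_2)$.

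On $\bZ/3[t_1,\dots,t_4]$ the Steenrod powers are determined by $P^k(t_i^n)=\binom{n}{k}t_i^{n+2k}$ and the Cartan formula, so writing $u_i=t_i^2$ one has $P^1(u_i)=-u_i^2$, $P^2(u_i)=u_i^3$, and $P^k(u_i)=0$ for $k\ge 3$. Direct expansion in power sums gives
\[P^1(p_1)=-p_1^2-p_2,\quad P^1(p_2)=-p_1p_2,\quad P^1(p_3)=-p_1p_3+p_4,\quad P^1(p_4)=-p_1p_4,\]
from which the $P^1$ identities follow: $P^1(p_1)=-\bar p_2+p_1^2=i^*(-x_8+x_4^2)$, $P^1(\bar p_2)=p_1\bar p_2=i^*(x_4x_8)$, and a short calculation yields $P^1(\bar p_5)=P^1(p_4p_1+p_3\bar p_2)=-3p_1^2p_4\equiv 0$. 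Instability gives $P^3(p_1)=0$. The Cartan formula then reduces $P^3(\bar p_2)$ to a sum of $P^2(u_i)P^1(u_j)$ and $P^1(u_i)P^1(u_j)P^1(u_k)$ type terms, which one collects and identifies with $\bar p_5-p_1\bar p_2^2=i^*(x_{20}-x_4x_8^2)$. The identity for $P^3(x_{20})$ follows by applying the Leibniz expansion of $P^3$ to $\bar p_5=p_4p_1+p_3\bar p_2$ and using the same basic ingredients.

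The final identity $P^3(x_{36})\equiv x_{48}$ modulo $(x_4,x_8)$ is proved by working modulo $I$. Since $i^*(x_{36})\equiv p_3^3$ mod $I$, the Cartan formula gives
\[P^3(p_3^3)=3p_3^2P^3(p_3)+6p_3P^2(p_3)P^1(p_3)+(P^1(p_3))^3\equiv (P^1(p_3))^3\pmod 3,\]
and the formula $P^1(p_3)=-p_1p_3+p_4$ reduces modulo $I$ to $p_4$; hence $(P^1(p_3))^3\equiv p_4^3\equiv i^*(x_{48})$ mod $I$. The main bookkeeping obstacle is the $P^3(x_{20})$ case: here both the starting class and the target lie in the ideal $I$, and the full Leibniz expansion produces many terms that must be carefully simplified using the symmetric-function identities together with the Weyl-invariant relation $\bar p_5^3=0$ of Theorem 11.3 before the expected form $x_{20}x_4(-x_8+x_4^2)$ can be recognised.
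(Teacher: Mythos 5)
The paper itself gives no argument for this lemma --- it is quoted from Toda [Tod1] --- so your verification by restriction to the maximal torus is a genuinely different, self-contained route (granted the quoted facts: Theorem 11.3 and Corollaries 11.2, 11.4), and in substance it works. In the degrees $8,12,16,20,24,32,48$ the kernel ideal $(x_{26})$ meets the even part of $H^*(BG;\bZ/3)$ trivially (a multiple of $x_{26}$ would need a companion class of degree $\le 22$ not divisible by $4$, and products of the odd classes start in degree $30$), $i^*$ commutes with the reduced powers, and your basic formulas $P^1(u_i)=-u_i^2$, $P^2(u_i)=u_i^3$, $P^k(u_i)=0$ for $k\ge 3$, together with $P^1(p_1)=-p_1^2-p_2$, $P^1(p_2)=-p_1p_2$, $P^1(p_3)=-p_1p_3+p_4$, $P^1(p_4)=-p_1p_4$, are correct; I checked that the Cartan expansions do yield $P^1(\bar p_5)\equiv 0$, $P^3(\bar p_2)=\bar p_5-p_1\bar p_2^2$ and $P^3(\bar p_5)=\bar p_5\,p_1(p_1^2-\bar p_2)$ as identities in $\bZ/3[p_1,\dots,p_4]$, and the mod-$I$ computation $P^3(p_3^3)=(P^1p_3)^3\equiv p_4^3$ is right. (Also note the statement's $x_1^2$ must be read as $x_4^2$, as you implicitly do.)

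Two points need repair. First, your closing remark that the $P^3(x_{20})$ case requires ``the Weyl-invariant relation $\bar p_5^3=0$ of Theorem 11.3'' is wrong: $r_{15}$ is a relation in the presentation of $H^*(BT;\bZ/3)^{W(G)}$ as a quotient, not the vanishing of $\bar p_5^3$ inside $\bZ/3[p_1,\dots,p_4]$, and in any case it sits in degree $60$ while $P^3(\bar p_5)$ has degree $32$, so no such relation can possibly enter; the identity is a plain polynomial identity obtained by Leibniz expansion alone, and the erroneous remark suggests this key computation was asserted rather than performed --- it should be written out (it does come out to $\bar p_5\,p_1(p_1^2-\bar p_2)$). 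Second, for the last identity you should justify the passage between the two quotients: since $i^*$ is surjective with kernel $(x_{26})$ (Corollary 11.2) and that ideal vanishes in degree $48$, any even class of degree $48$ whose restriction lies in $I=(p_1,\bar p_2)$ already lies in $(x_4,x_8)$; and replacing $\bar p_9$ by $p_3^3$ (and $\bar p_{12}$ by $p_4^3$) inside $P^3$ uses that $I=(p_1,p_2)$ is stable under the reduced powers, which follows from the displayed formulas for $P^k(p_1),P^k(p_2)$ but should be said explicitly.
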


Recall that the  $mod(3)$ cohomology of $F_4$ is 
\[ H^*(G;\bZ/3)\cong \bZ/3[y_8]/(y_8^3)\otimes
\Lambda(x_3,x_7,x_{11},x_{15}).\]
Here suffices mean their degree.
Recall the cohomology of the flag variety 
\[H^*(G/T;\bZ/3)\cong P(y)\otimes S(t)/(b_1,...,b_4)\]
and so $b_1=p_1,b_2=\bar p_2,b_3=p_3,b_4=p_4$.
Define   $D_{H/3}(G)=Ker(j^+)/(Im(i^+)$ for
 \[  H^*(BG;\bZ/3)\stackrel{i^*}{\to}
    H^*(BT;\bZ/3)\stackrel{j^*}{\to} H^*(G/T;\bZ/3).\]
\begin{prop} We have additively 
\[D_{H/3}(G)\cong \bZ/3[p_3,p_4]^+/(p_3^3,p_4^3)
\otimes S(t,p)\quad for \ S(t,p)\cong S(t)/(p_1,...,p_4).\]
\end{prop}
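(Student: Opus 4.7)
The plan is to apply the filtration-splitting lemma (Lemma 3.2) after identifying $Ker(j^+)$ and $Ideal(Im(i^+))$ as explicit ideals in $S(t)/3 = H^*(BT;\bZ/3)$. Using the identification $b_1,b_2,b_3,b_4 = p_1,\bar p_2, p_3, p_4$ stated just before the proposition, Lemma 3.2 gives at once
\[ Ker(j^+) \cong A(b)^+ \otimes S(t)/(b), \qquad A(b) = \bZ/3[p_1, \bar p_2, p_3, p_4]. \]

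Next I would identify $Im(i^+)$ via the invariant-theoretic description in Corollary 11.2, which says $H^{even}(BG;\bZ/3)/(Q_2Q_1 x_4) \cong H^*(BT;\bZ/3)^{W(G)}$.  Since the odd-degree classes $x_9, x_{21}$ and the class $Q_2Q_1 x_4 = x_{26}$ all lie in $Ker(i^*)$, this forces $Im(i^+)$ to coincide with $(H^*(BT;\bZ/3)^{W(G)})^+$, which by Theorem 11.3 and Corollary 11.4 is generated as an ideal of $S(t)/3$ by $p_1, \bar p_2, \bar p_5, \bar p_9, \bar p_{12}$.  Using the explicit identities in Theorem 11.3, namely $\bar p_5 = p_1 p_4 + p_3 \bar p_2$, together with $\bar p_9 \equiv p_3^3$ and $\bar p_{12} \equiv p_4^3$ modulo $(p_1, \bar p_2)$, I can discard $\bar p_5$ as redundant and rewrite to obtain
\[ Ideal(Im(i^+)) = (p_1, \bar p_2, p_3^3, p_4^3). \]

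A second application of Lemma 3.2 then yields $Ideal(Im(i^+)) \cong A(b)(p_1, \bar p_2, p_3^3, p_4^3) \otimes S(t)/(b)$ additively, and dividing this into the expression for $Ker(j^+)$ gives
\[ D_{H/3}(G) \cong \frac{A(b)^+}{(p_1, \bar p_2, p_3^3, p_4^3)} \otimes \frac{S(t)}{(b)} \cong \frac{\bZ/3[p_3, p_4]^+}{(p_3^3, p_4^3)} \otimes S(t,p), \]
where in the last step $(p_1, \bar p_2) = (p_1, p_2)$ identifies $S(t)/(b)$ with $S(t,p) = S(t)/(p_1, \dots, p_4)$.

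The argument is essentially formal once the ingredients are assembled; I do not expect a serious obstacle.  The one delicate step is the reduction of the five invariant-ring generators of $Im(i^+)$ to the cleaner set $\{p_1, \bar p_2, p_3^3, p_4^3\}$: this is what produces the cube truncations $p_3^3, p_4^3$ in the final answer rather than $p_3, p_4$, and it relies on the precise shape of the Toda invariants $\bar p_5, \bar p_9, \bar p_{12}$ recorded in Theorem 11.3.
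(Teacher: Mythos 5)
Your proposal is correct and follows essentially the same route as the paper: identify $Ker(j^+)$ via the transgression generators $b_1,\dots,b_4=p_1,\bar p_2,p_3,p_4$, identify $Im(i^+)$ through Corollary 11.2, Theorem 11.3 and Corollary 11.4, reduce the Toda invariants modulo $I=(p_1,\bar p_2)$ so that $\bar p_5$ is redundant and $\bar p_9,\bar p_{12}$ become $p_3^3,p_4^3$, and then read off the quotient additively via the $A(b)\otimes S(t)/(b)$ splitting of Lemma 3.2. The paper's own proof is just a terser version of this (it only records that $p_1,\bar p_2,p_3^3,p_4^3$ die in $D_{H/3}(G)$), so your explicit use of Lemma 3.2 and of the surjectivity onto $H^*(BT;\bZ/3)^{W(G)}$ merely fills in steps the paper leaves implicit.
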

\begin{proof}
First note that $i^*(x_4)=p_1$, $i^*(x_8)=\bar p_2$ and 
$p_1,\bar p_2$ are zero in $D_{H/3}(G)$.

Since $i^*(x_{36})=\bar p_9=p_3^3$ $mod(I)$, we see $p_3^3=0\in \bar D_{H/3}(G)$.
Similarly, we see $p_4^3=0\in D_{H/3}(G)$
from $i^*(x_{48})=\bar p_{12}$.
\end{proof}

\section{$BP^*$-theory and Chow ring for $(F_4,3)$}

We consider the Atiyah-Hirzebruch spectral
sequence \cite{Ko-Ya}
\[ E_2^{*,*'}\cong H^*(BG)\otimes BP^*
\Longrightarrow BP^*(BG).\]
Its differentials have forms of 
$ d_{2p^n-1}(x)=v_n\otimes Q_n(x).$
Using $Q_1(x_4)=x_9, Q_1(x_{20})=x_{25}, Q_1(x_{21})=x_{26}$ and $Q_2x_{9}=x_{26}$, we can compute (\cite{Ko-Ya})
\[ E_{\infty}^{*,*'}
\cong D\otimes(BP^*\otimes(\bZ_{(3)}\{1,3x_4\}\oplus E)
 \oplus BP^*/(3,v_1,v_2)[x_{26}]^+).\]
Hence we have
\begin{thm}  (\cite{Ko-Ya}, \cite{YaF4})  We have the isomorphism
\[BP^*(BG)\otimes_{BP^*}\bZ_{(3)}\cong D\otimes
    (\bZ_{(3)}\{1,3x_4\}\oplus E\oplus \bZ/3[x_{26}]^+).\]
\end{thm}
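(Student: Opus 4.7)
The plan is to run the Atiyah--Hirzebruch spectral sequence
\[ E_2^{*,*'} \cong H^*(BG) \otimes BP^* \Longrightarrow BP^*(BG), \]
whose differentials satisfy $d_{2p^n-1}(x) \equiv v_n \otimes Q_n(x)$ modulo higher filtration, and then to tensor the resulting $E_\infty$-page with $\bZ_{(3)}$ over $BP^*$. This is essentially the computation of [Ko-Ya] and [YaF4], so the task is to identify the relevant differentials and to verify that no extension problems obstruct the final answer.

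First I would combine Toda's results in Theorem 11.1 and Corollary 11.2 to decompose the $E_2$-page into a torsion-free summand $D\otimes(\bZ_{(3)}\{1,x_4\}\oplus E)\otimes BP^*$ and a $3$-torsion summand generated over $D\otimes BP^*$ by $x_9, x_{21}, x_{26}$ and their products. The required Milnor-operation values follow from Lemma 11.5 (Toda's reduced powers) together with the Bockstein and the defining commutator formula for $Q_i$: one checks $Q_0 x_8 = x_9$, $Q_0 x_{20} = x_{21}$, $Q_1 x_4 = x_9$, $Q_1 x_{20} = x_{25}$, $Q_1 x_{21} = x_{26}$, $Q_2 x_9 = x_{26}$, while $Q_n x_{26} = 0$ for $n \le 2$.

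The $d_3 = v_1\otimes Q_1$ differential then consumes the odd-degree torsion classes $x_9$, $x_{25}$ and $x_{26}$ via $d_3 x_4$, $d_3 x_{20}$, $d_3 x_{21}$ respectively, turning $x_4$ into a $v_1$-torsion class that survives only in the combination $3x_4$ (since $3x_4$ lifts integrally from $Q_0$ of the integral lift of $x_8$). The products comprising $E$ remain permanent by degree considerations. The $d_7 = v_2\otimes Q_2$ and higher differentials leave the torsion-free summand untouched, while they collapse the $3$-torsion contribution onto $D\otimes BP^*/(3,v_1,v_2)[x_{26}]^+$, since $x_{26}$ is $Q_n$-closed for $n\le 2$ and no further $Q_n$-value can reach it without creating an inconsistency with the known $H^*(BG;\bZ/3)$.

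Finally, the change-of-rings functor $-\otimes_{BP^*}\bZ_{(3)}$ kills all $v_i$: the torsion-free summand becomes $D\otimes(\bZ_{(3)}\{1,3x_4\}\oplus E)$, and $BP^*/(3,v_1,v_2)[x_{26}]^+$ becomes $\bZ/3[x_{26}]^+$, yielding the stated decomposition. The main obstacle is justifying the $d_3$-treatment of $x_4$ (namely that it survives only as $3x_4$) and ruling out hidden multiplicative extensions between the torsion and torsion-free summands after tensoring with $\bZ_{(3)}$; the former is settled by comparing with the integral Bockstein structure of $H^*(BG)$, while the latter follows from the observation that the two summands live in disjoint $v_i$-divisibility classes on $E_\infty$, so no $v_i$-linking can produce a nonzero extension once we set $v_i=0$.
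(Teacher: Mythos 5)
Your proposal takes essentially the same route as the paper: the proof in \S 12 is exactly this Atiyah--Hirzebruch computation, using $Q_1x_4=x_9$, $Q_1x_{20}=x_{25}$, $Q_1x_{21}=x_{26}$, $Q_2x_9=x_{26}$ to obtain $E_{\infty}\cong D\otimes(BP^*\otimes(\bZ_{(3)}\{1,3x_4\}\oplus E)\oplus BP^*/(3,v_1,v_2)[x_{26}]^+)$ (with the detailed bookkeeping deferred to \cite{Ko-Ya}) and then applying $-\otimes_{BP^*}\bZ_{(3)}$. Two small slips to fix: at $p=3$ the relevant differentials are $d_5=v_1\otimes Q_1$ and $d_{17}=v_2\otimes Q_2$ (your $d_3$, $d_7$ are the $p=2$ indices), and these differentials out of $x_4$, $x_{20}$, $x_{21}$ kill only $v_1x_9$, $v_1x_{25}$, $v_1x_{26}$ rather than the torsion classes themselves --- $x_9$ and $x_{21}$ die as \emph{sources} of $d_{17}(x_9)=v_2x_{26}$ and $d_5(x_{21})=v_1x_{26}$, while $x_{26}$ itself survives, as it must since $\bZ/3[x_{26}]^+$ appears in the final answer.
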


\begin{lemma}  (\cite{YaF4}) We see $x_{26}\in Im(cl)$.
\end{lemma}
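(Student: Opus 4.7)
The plan is to exhibit $x_{26}$ as the topological realization of a mod-$3$ motivic (Chow) class obtained by applying motivic Milnor operations to $x_4$, then bootstrap from the mod-$3$ statement to the integral one.

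First, I would check that $x_4$ is algebraic, i.e., lies in the image of the integral cycle map. By Corollary 11.4, $i^*(x_4)=p_1$, the first Pontrjagin class, and $F_4$ admits the complex $26$-dimensional representation $V$, so $p_1$ can be expressed through the Chern classes of $V$. Hence (up to a unit in $\bZ_{(3)}$) the class $x_4$ is already a polynomial in Chern classes, giving a lift $x_4 \in CH^2(BG_k)/3 \cong H^{4,2}(BG_k;\bZ/3)$ in the motivic picture.

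Second, I would apply Voevodsky's motivic Milnor operations $Q_i$ of bidegree $(2\cdot 3^i-1,\,3^i-1)$. Since $x_4$ lifts integrally, $Q_0 x_4 = 0$, so $Q_1 x_4 = \beta P^1 x_4 \in H^{9,4}(BG_k;\bZ/3)$; applying $Q_2$ gives $Q_2Q_1 x_4 \in H^{26,12}(BG_k;\bZ/3)$. Compatibility of motivic and topological Steenrod operations under realization shows this class maps to the topological class $Q_2 Q_1 x_4 = x_{26}$ in $H^*(BG;\bZ/3)$. To land in the Chow strip, I would multiply by $\tau \in H^{0,1}(\mathrm{Spec}(k);\bZ/3)$:
\[
\tau\cdot Q_2Q_1 x_4 \;\in\; H^{26,13}(BG_k;\bZ/3) \;\cong\; CH^{13}(BG_k)/3.
\]
Since topological realization sends $\tau\mapsto 1$, this mod-$3$ Chow class realizes to $x_{26}$, proving $x_{26}\in \mathrm{Im}(cl/3)$.

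Finally, I would promote the result to the integral cycle map. Because $x_{26}=Q_0 x_{25}$ is a Bockstein image, it is $3$-torsion in $H^{26}(BG)$; the preceding $BP^*$ calculation (Theorem 12.1) places it precisely in the $\bZ/3[x_{26}]^+$ summand of $BP^*(BG)\otimes_{BP^*}\bZ_{(3)}$. Using the factorization $cl = \rho\circ\bar{cl}$ with Totaro's modified cycle map, it suffices to produce a preimage in $BP^*(BG)\otimes_{BP^*}\bZ_{(3)}$; the motivic construction above, pushed through the natural transformation from motivic cohomology to $BP^*$, provides such a preimage and gives the required integral lift. The main obstacle is precisely this last step: verifying that the $\tau$-multiplied mod-$3$ Chow class really does yield the specific $3$-torsion class labeled $x_{26}$ in $BP^*(BG)\otimes_{BP^*}\bZ_{(3)}$, rather than some class differing by decomposables or by torsion ambiguity — this is where one must pin down normalizations using the explicit form of Toda's generator and the structure of the $BP$-Atiyah--Hirzebruch spectral sequence from Theorem 12.1.
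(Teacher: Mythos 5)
The gap is at your very first step. You claim that because $i^*(x_4)=p_1$ and $p_1$ is expressible through Chern classes of the $26$-dimensional representation, the class $x_4$ itself (up to a unit in $\bZ_{(3)}$) is a polynomial in Chern classes and hence lifts to $CH^2(BG_k)/3\cong H^{4,2}(BG_k;\bZ/3)$. This is false: restriction to $BT$ does not detect the integral factor, and for $F_4$ only $3x_4$ is represented by Chern classes (this is exactly what Lemma 3.1--3.4 of \cite{YaF4}, quoted in Section 12, provide). Indeed, Theorem 12.1 shows that the degree-$4$ part of $BP^*(BG)\otimes_{BP^*}\bZ_{(3)}$ is $\bZ_{(3)}\{3x_4\}$, and since $cl=\rho\circ\bar cl$ by (4.2), the image of $CH^2(BG_k)\to H^4(BG)$ lies in $\bZ_{(3)}\{3x_4\}$; hence the image of the mod-$3$ cycle map in $H^4(BG;\bZ/3)$ is zero and the weight-$2$ lift of $x_4$ you start from does not exist. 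Everything downstream (applying $Q_2Q_1$ in weight $2$, then multiplying by $\tau$ --- which for odd $p$ also presupposes $\mu_3\subset k$) rests on this nonexistent class, and the concluding passage through $BP^*$ does not rescue it.

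The paper's proof is built precisely to avoid this obstruction: its key input is Lemma 4.3 of \cite{YaF4}, which says that if $x\in H^4(X(\bC))$ and $px\in \mathrm{Im}(cl)$ (here $3x_4$ is a Chern class), then $x$ admits a motivic lift $x'\in H^{4,3}(X;\bZ/3)$ --- weight $3$, one higher than the Chow weight, which is exactly where the failure of $x_4$ itself to be algebraic is absorbed. Applying $Q_2Q_1$ (bidegrees $(5,2)$ and $(17,8)$) to this weight-$3$ class lands directly in $H^{26,13}(X;\bZ/3)\cong CH^{13}(X)/3$, with topological realization $Q_2Q_1x_4=x_{26}$; no $\tau$-multiplication and no detour through $BP^*(BG)\otimes_{BP^*}\bZ_{(3)}$ is needed. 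To repair your argument you would have to replace your first step by this weight-shifted lift (or prove such a statement yourself); as written, the proof does not go through.
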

\begin{proof}
From Lemma 4.3 in \cite{YaF4}, (see also \cite{Ka-Ya})  if $x\in H^4(X(\bC)$
and $px\in Im(cl)$, then there is $x'\in H^{4,3}(X;\bZ/p)$
such that $cl(x')=x\ mod(p)$. Note
\[ y=Q_2Q_1(x')\in H^{26.13}(X:\bZ/3)\cong CH^{13}(X)/3.\]
Hence we have the lemma from $x_{26}=cl(y)$.
\end{proof}

Let  $Grif \subset Tor\subset CH^*(X|_{\bar k})$ be the ideal generated by Griffiths elements i.e., $Grif=Ker(t_{\bC})$ for $t_{\bC}
: CH^*(X|_{\bar k})\to H^*(X)$
\begin{cor}  We have  $Tor/Grif \cong D\otimes \bZ/3[x_{26}]\{x_{26}\}$ and
\[ CH^*(BG_{\bar k})/Tor\subset   D\otimes (\bZ_{(3)}\{1,3x_4\}\oplus E)\subset H^*(BG)/Tor.\]
\end{cor}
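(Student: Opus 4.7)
The plan is to combine the structure theorem in Theorem 12.1 with Lemma 12.2, working through the factorization $cl=\rho\circ\bar{cl}$ of the cycle map via Totaro's modified cycle map $\bar{cl}:CH^*(BG_{\bar k})\to BP^*(BG)\otimes_{BP^*}\bZ_{(3)}$ from Section 4.

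First I would note that the decomposition in Theorem 12.1 splits cleanly by torsion: the summand $D\otimes(\bZ_{(3)}\{1,3x_4\}\oplus E)$ is torsion-free, since $D$ and $E$ are free $\bZ_{(3)}$-modules, while $D\otimes \bZ/3[x_{26}]^+$ is annihilated by $3$. Under the edge map $\rho:BP^*(BG)\otimes_{BP^*}\bZ_{(3)}\to H^*(BG)$, the torsion-free summand embeds into $H^*(BG)/Tor\cong D\otimes(\bZ_{(3)}\{1,x_4\}\oplus E)$ as the specified sub-$D$-module (with $3x_4\mapsto 3x_4$), while the torsion summand maps onto the $D\otimes \bZ/3[x_{26}]\{x_{26}\}$ piece sitting inside $Tor(H^*(BG))$.

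For the second inclusion $CH^*(BG_{\bar k})/Tor\subset D\otimes(\bZ_{(3)}\{1,3x_4\}\oplus E)$, I would pass $\bar{cl}$ to the quotient by torsion. Any element of $CH^*(BG_{\bar k})/Tor$ has torsion-free image in $(BP^*(BG)\otimes_{BP^*}\bZ_{(3)})/Tor$, which by the previous paragraph is exactly $D\otimes(\bZ_{(3)}\{1,3x_4\}\oplus E)$. By the Becker--Gottlieb transfer argument (Lemma 4.1 and Corollary 4.3, using that $F_4$ is simply connected), the composition with $\rho$ is injective on $CH^*(BG_{\bar k})/Tor$, giving the displayed embedding into $H^*(BG)/Tor$.

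For $Tor/Grif\cong D\otimes \bZ/3[x_{26}]\{x_{26}\}$, note that $Grif=\Ker(cl)$ by definition, so $cl$ induces an injection $Tor/Grif\hookrightarrow cl(Tor)\subseteq Tor(H^*(BG))$. Lemma 12.2 supplies a specific $y\in CH^{13}/3$ with $cl(y)=x_{26}$; multiplying $y$ by Chern classes generating $D$ (which lift from $H^*(BG)/Tor$ to $CH^*(BG_{\bar k})$ by Corollary 4.3) and by powers of $y$ itself realizes every element of $D\otimes \bZ/3[x_{26}]\{x_{26}\}$ as a cycle class. The reverse containment comes from factoring $cl=\rho\circ\bar{cl}$: a torsion class in $\Img(cl)$ must come from the torsion summand $D\otimes\bZ/3[x_{26}]^+$ of $BP^*(BG)\otimes_{BP^*}\bZ_{(3)}$, whose $\rho$-image is precisely $D\otimes \bZ/3[x_{26}]\{x_{26}\}$.

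The main obstacle will be verifying that $\rho$ sends the torsion summand $D\otimes\bZ/3[x_{26}]^+$ injectively onto $D\otimes\bZ/3[x_{26}]\{x_{26}\}\subset H^*(BG)$: this requires a careful reading of the Atiyah--Hirzebruch spectral sequence edge map on the $[x_{26}]$ tower, using that the differentials involving $v_1, v_2$ and $x_{26}$ already collapse to give the form in Theorem 12.1. Equally, ensuring the multiplicative structure on $CH^*(BG_{\bar k})$ (cup products of Chern classes with $y$ and its powers) exhausts the full $D$-module $D\otimes\bZ/3[x_{26}]\{x_{26}\}$ relies on having enough Chern classes, which is where Corollary 4.3 is needed in full.
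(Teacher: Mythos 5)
Your proposal takes essentially the same route the paper implicitly intends: the corollary is read off from Theorem 12.1 together with the factorization $cl=\rho\circ\bar{cl}$ of (4.2), with Lemma 12.2 and the Chern-class representability of $D$ supplying the classes hitting the $x_{26}$-tower, and the transfer/rational-injectivity input from Section 4 giving the embedding of $CH^*(BG_{\bar k})/Tor$ into $H^*(BG)/Tor$. One step you should make explicit: for $Tor/Grif\cong D\otimes\bZ/3[x_{26}]\{x_{26}\}$ you need the classes $\tilde y^{\,k}c$ (an integral lift $\tilde y$ of $y$ from Lemma 12.2, multiplied by Chern classes and powers of $\tilde y$) to lie in $Tor$, not merely in $CH^*(BG_{\bar k})$; as written you only obtain $D\otimes\bZ/3[x_{26}]\{x_{26}\}\subset Im(cl)$, which does not yet identify it with $cl(Tor)\cong Tor/Grif$. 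This is closed in one line by the rational injectivity you already invoke: their cohomology images are $3$-torsion (indeed $H^{26}(BG;\bQ)=0$ since $H^*(BG)/Tor$ is concentrated in degrees divisible by $4$), so the classes themselves are torsion; with that, and with the check you correctly flag that $\rho$ is injective on the summand $D\otimes\bZ/3[x_{26}]^+$ (a filtration/edge-map observation in the Atiyah--Hirzebruch computation behind Theorem 12.1), your outline matches the intended argument.
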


If Totaro's conjecture is correct, then $Grif=\{0\}$
and the first inclusion is an isomorphism.

From Lemma 3.1-3.4 in \cite{YaF4}, we see
$x_{36},3x_4,x_4^3,...$ are represented by Chern classes.
Moreover we still know
\begin{lemma} (\cite{YaF4})
Let $RP$ be the subalgebra of
the $mod(3)$ Steenrod algebra $A_3$ generated by reduced powers.
Then $(BP^*(BG)\otimes _{BP^*}\bZ_{(3)})/(Tor,3)$ is generated 
as an $PR$-module by
\[ x_4^2,\ \ x_8^2,\ \ and \ \ 
 products\ of\ some\ Chern\ classes.\]
\end{lemma}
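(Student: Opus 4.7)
The plan is to extract the additive structure from Theorem 12.1 and then reach every generator by applying reduced powers from Lemma 11.5 to $x_4^2$ and $x_8^2$, modulo a supply of Chern classes. Modulo $(Tor,3)$ the summand $\bZ/3[x_{26}]^+$ vanishes and $\bZ_{(3)}\{1,3x_4\}$ collapses to $\bZ/3\{1\}$, so Theorem 12.1 yields
\[ (BP^*(BG)\otimes_{BP^*}\bZ_{(3)})/(Tor,3)\;\cong\;D/3\otimes(\bZ/3\{1\}\oplus E/3), \]
where $E/3$ is the $F/3$-module generated by the six products $x_4^2,\,x_4x_8,\,x_8^2,\,x_4x_{20},\,x_8x_{20},\,x_{20}^2$ subject to $x_{20}^3=0$. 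Lemma 3.1--3.4 of \cite{YaF4} identifies $x_{36}$, $x_{48}$, $3x_4$, $x_4^3$ and the remaining listed elements with products of Chern classes, which at once disposes of the $D/3\cdot\{1\}$ summand and fixes the auxiliary Chern supply modulo which the rest of the computation will run.

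Using Cartan's formula with the reduced powers of Lemma 11.5, the two backbone calculations
\[ P^1(x_4^2)=2x_4(-x_8+x_4^2)=-2x_4x_8+2x_4^3 \]
and
\[ P^3(x_8^2)=2x_8(x_{20}-x_4x_8^2)+2(P^1x_8)(P^2x_8)=2x_8x_{20}-2x_4x_8^3+2x_4x_8\cdot P^2(x_8) \]
produce $x_4x_8$ (once $x_4^3$ is absorbed into the Chern supply) and, after the error terms are absorbed, $x_8x_{20}$, as $RP$-images of $x_4^2$, $x_8^2$ together with products of Chern classes. A second pass applying $P^3$ to $x_4x_8$ and to $x_8x_{20}$ then delivers $x_4x_{20}$ and $x_{20}^2$, while the higher $F/3$- and $D/3$-multiples follow from ring multiplication by $x_4, x_8, x_{36}, x_{48}$, the last two being Chern.

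The key difficulty is the Cartan bookkeeping: applying $P^i$ to products of classes of degree $\geq 8$ pulls in intermediates such as $P^2(x_8)$ and $P^2(x_{20})$ that are not listed in Lemma 11.5. One would handle these by translating through Corollary 11.4 and computing inside $H^*(BT;\bZ/3)^{W(G)}=\bZ/3[p_1,\bar p_2,\bar p_5,\bar p_9,\bar p_{12}]/(r_{15})$ to verify that each such intermediate either lies in the Chern class subring or, by degree and by the explicit form of $E$, already sits in the piece of the $RP$-module generated at a lower stage. Once that inductive accounting is in place every $\bZ/3$-basis element of $D/3\otimes(\bZ/3\{1\}\oplus E/3)$ is accounted for, which is the asserted generation statement.
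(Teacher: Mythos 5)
The paper itself gives no proof of this lemma (it is quoted from [Ya2]), so your argument has to stand on its own, and as written it does not: the decisive step is missing. Generation as an $RP$-module allows only $\bZ/3$-linear combinations of reduced-power images of the listed generators, so your closing move ``the higher $F/3$- and $D/3$-multiples follow from ring multiplication by $x_4,x_8,x_{36},x_{48}$'' is not a legitimate operation. Worse, $x_4$ and $x_8$ are not even elements of $(BP^*(BG)\otimes_{BP^*}\bZ_{(3)})/(Tor,3)$: by Theorem 12.1 the degree $4$ part is generated by $3x_4$, and $x_8,x_{20}$ do not lift to $H^*(BG)$ at all (they support Bocksteins), so ``multiplication by $x_4$'' is undefined in the module, and a class such as $x_{36}^ax_{48}^b\,x_4^cx_8^d x_{20}^e$ with $c+d+e\ge 3$ is neither a product of Chern classes (that $x_4^2$ and $x_8^2$ are \emph{not} Chern is exactly why they appear as separate generators) nor, in your write-up, exhibited as a reduced-power image of the generators. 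Thus all of $D/3\otimes E/3$ beyond the six quadratic monomials $x_4^2,x_4x_8,x_8^2,x_4x_{20},x_8x_{20},x_{20}^2$ --- which is almost the whole module --- is left unproven; producing those classes from $x_4^2$, $x_8^2$ and the Chern subring by reduced powers is precisely the content of the lemma in [Ya2], where it is done by explicit computation of Chern classes of the relevant representations restricted to $H^*(BT)^{W}$.

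Even for the six quadratic classes the argument is only a plan: the Cartan error terms $P^2x_8$ and $P^2x_{20}$ are ``handled'' by a promise to compute in $\bZ/3[p_1,\bar p_2,\bar p_5,\bar p_9,\bar p_{12}]/(r_{15})$, but since $P^2x_8$ may a priori involve $x_8^2$ and $x_4^2x_8$ with unknown coefficients, the claimed outputs $x_4x_8$, $x_8x_{20}$, and then $x_4x_{20}$, $x_{20}^2$ are not actually established (only $P^2x_4=x_4^3$ is forced, by instability). Two smaller points: $3x_4$ does not vanish modulo $(Tor,3)$ --- it generates a free summand and is not $3$-divisible in the module --- although it is a Chern class, so the statement is unharmed; and $P^1(x_4)=-x_8+x_4^2$ (the paper's $x_1^2$ is a typo), which you used correctly but should flag before squaring it through the Cartan formula.
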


Here we consider the (algebraic) $K$-theory
with the coefficient $K^*=\bZ_{(p)}[v_1,v_1^{-1}]$
such that
\[ BP^*(BG)\otimes _{BP^*}K^*\cong K^*(BG).\] 
Recall that  $gr_{geo}^*(X)$ is the graded associated ring
defined by the geometric filtration of $K^0(X)$ 
(that is isomorphic to the infinite term $E_{\infty}^{2*,*,0}$
of the motivic Atiyah-Hirzebruch spectral sequence). 
Then it is well known that we have the surjection
$ CH^*(X)\to  gr_{geo}^*(X).$

\begin{lemma}  We see $x_4^2\in Im(cl).$
\end{lemma}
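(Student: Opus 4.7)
The aim is to exhibit $x_4^2$ as the image of an explicit algebraic cycle class. Since $cl = \rho \circ \bar{cl}$ and $x_4^2$ sits integrally in the summand $E$ of both $H^*(BG)/Tor$ (Corollary 11.2) and $BP^*(BG)\otimes_{BP^*}\bZ_{(3)}$ (Theorem 12.1), it suffices to show $x_4^2 \in \mathrm{Im}(\bar{cl})$. My plan is to first produce a motivic lift of $x_4 \bmod 3$: since $3x_4 \in \mathrm{Im}(cl)$ (it equals a Chern class of a complex representation of $F_4$, as recorded just before the lemma), Lemma 4.3 of \cite{YaF4} furnishes a class $x' \in H^{4,3}(BG_{\bar k};\bZ/3)$ with realization $x_4 \bmod 3$. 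The natural candidates $(x')^2 \in H^{8,6}$ and $P^1(x') \in H^{8,5}$, whose realizations are $x_4^2$ and $x_4^2 - x_8$ respectively by Lemma 11.5, lie in weights $6$ and $5$, two or one above the Chow bidegree $H^{8,4} \cong CH^4(BG_{\bar k})/3$.

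To bridge the weight gap I would pass to an explicit Chern-class computation. Take a small representation $\pi$ of $F_4$ (the $26$-dimensional minuscule representation is the natural candidate) and compute $c_4(\pi) \in CH^4(BG_{\bar k})$. Restricting to the maximal torus and using Theorem 11.3 together with the dictionary $i^*(x_4) = p_1$, $i^*(x_8) = \bar p_2$ from Corollary 11.4, one expresses $c_4(\pi)|_T$ in the form $\alpha\, x_4^2 + \beta\, x_8 + (\text{terms already in }\mathrm{Im}(cl))$. Combining with the identity $c_2(\pi)^2 = 9x_4^2 \in \mathrm{Im}(cl)$ and using other Chern classes to absorb the $x_8$-contribution (in the spirit of the representations producing $x_{36}$, $3x_4$, $x_4^3$ mentioned before the lemma), one solves for $x_4^2$ as an algebraic cycle class, provided the coefficient $\alpha$ is a $3$-adic unit.

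The hard part will be verifying this arithmetic: identifying a representation of $F_4$ whose fourth Chern class has a mod-$3$ $x_4^2$-coefficient that is nonzero, and simultaneously controlling the $x_8$ and higher-order contributions so that the resulting linear system can be solved in $CH^4(BG_{\bar k})_{(3)}$. This reduces to a concrete invariant-theoretic computation on $\bZ[t_1,\dots,t_4]^{W(F_4)}$, which is finite but delicate; the weight filtration organized by Theorem 11.3 and the known actions of $P^1, P^3$ from Lemma 11.5 should make the calculation tractable.
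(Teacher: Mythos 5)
There is a genuine gap: the step on which everything hinges --- finding a representation $\pi$ of $F_4$ with $c_4(\pi)=\alpha x_4^2+\cdots$ and $\alpha$ a $3$-adic unit --- is not verified in your proposal, and it appears to be false. Restricting to the maximal torus of $Spin(9)\subset F_4$ (weights $0,\pm t_i$ and $\tfrac12(\pm t_1\pm t_2\pm t_3\pm t_4)$), the $26$-dimensional representation gives $c_2=-3p_1$ and $c_4=\tfrac{15}{4}p_1^2$, and the adjoint gives $c_2=-9p_1$ and $c_4=\tfrac{147}{4}p_1^2$; in both cases the coefficient of $p_1^2=i^*(x_4^2)$ has $3$-adic valuation exactly $1$. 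This is not an accident: every representation of $F_4$ is self-dual, so odd Chern classes die $3$-locally and $c_4=\tfrac12\bigl[(\sum_w w^2)^2-\sum_w w^4\bigr]$ over positive weights; $\sum_w w^2$ is divisible by $3$ (the same Dynkin-index fact that puts $3x_4$ but not $x_4$ in the Chern image, so your ``absorbing'' classes $c_2(\pi)^2$ are all divisible by $9$), and the additive invariant $\sum_w w^4$ is divisible by $3$ for the generating representation $26$ and stays so under tensor and exterior powers, so $c_4(\pi)\equiv 0\ \mathrm{mod}\ (3,\ \mathrm{decomposables})$ for every $\pi$. In other words $x_4^2$ does not lie in the Chern subring modulo $3$, which is exactly why Lemma 12.4 lists $x_4^2$ and $x_8^2$ separately from ``products of Chern classes''; if your linear system were solvable, the open question about $x_8^2$ in Propositions 12.6--12.8 would not be posed conditionally. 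So the reduction ``to a concrete invariant-theoretic computation'' does not just remain to be done --- it cannot succeed.

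The paper's proof is of a completely different, indirect kind and produces no explicit cycle. One assumes $x_4^2\notin \mathrm{Im}(cl)$; since $x_4^2$ exists in $BP^*(BG)$ it survives to $K^0(BG_k)\cong K^0(BG)$, and surjectivity of $CH^*(X)\to gr_{geo}^*(X)$ yields a class $c\in CH^*(BG_k)$ representing $v_1^sx_4^2$ with $s\ge 1$ in the graded $K$-theory. Since $|v_1|=-4$ at $p=3$, degree count forces $s=1$ and $c\in CH^2(BG_k)$, contradicting Totaro's isomorphism $CH^2(BG)\cong (BP^*(BG)\otimes_{BP^*}\bZ_{(3)})^4$, where no such class is available. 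If you want to rescue your write-up, you must either exhibit a representation violating the divisibility pattern above (which the computation suggests does not exist) or abandon the Chern-class route and argue through the geometric filtration of $K$-theory as the paper does.
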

\begin{proof}
Suppose that $x_4^2\not \in CH^*(BG_k)$.  However
$x_4^2$ exists in $K^*(BG_k)\cong K^*(BG)$, because
it exists in $BP^*(BG)$.
Since 
$ CH^*(X)\to gr_{geo}^*(X)$ is surjective,  there is an
element 
\[ c\in CH^*(BG_k)\quad such\ that \ \
    c=v_1^sx_4^2\ for\ s\ge 1.\]By dimensional reason,
this $s=1$ and $|c|=4$.  But by Totaro
\[CH^2(BG)\cong (BP^*(BG)\otimes _{BP^*}\bZ_{(p)})^4,\]
which is a cotradiction.
\end{proof}

\begin{prop}  ([Ya1])
Let $(G,p)=(F_4,3)$.
Suppose $x_8^2\in Im(cl)$.  Then
the modified cycle map  
$\bar cl :CH^*(BG_k)\to BP^*(BG)\otimes_{BP^*}\bZ_{(3)}$ 
 is surjective.   Moreover, we  have
\[ Im(\bar cl)\cong Im(cl)\cong D\otimes (
\bZ_{(3)}\{1,3x_4\}\oplus E
\oplus \bZ/3[x_{26}]^+).\]
(Here $D=\bZ_{(3)}[x_{36},x_{48}]$ and
$E\oplus\bZ_{(3)}\{1,x_4,x_8,x_{20}\}=\bZ_{(3)}[x_4,x_8,x_{20}]/(x_{20}^3))$.
\end{prop}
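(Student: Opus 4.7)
The plan is to use Lemma 12.4 together with the algebraic lifts supplied by Lemmas 12.2 and 12.5 and the standing hypothesis to see that $Im(\bar cl)$ fills up $BPZ := BP^*(BG) \otimes_{BP^*} \bZ_{(3)}$ modulo $3$, then to apply a degree-wise Nakayama argument and finally to pass through $\rho$ to identify $Im(cl)$.

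First I would assemble the algebraic classes known to lie in $Im(\bar cl)$. Via the factorization $cl = \rho \circ \bar cl$, anything in $Im(cl)$ lifts automatically. Lemmas 3.1--3.4 of \cite{YaF4} supply Chern-class lifts of $3x_4$, the generators $x_{36}, x_{48}$ of $D$, and further Chern products; Lemma 12.5 gives $x_4^2$; the standing hypothesis gives $x_8^2$; Lemma 12.2 gives $x_{26}$. Since motivic reduced powers on $CH^*(-)/3 \cong H^{2*,*}(-;\bZ/3)$ are compatible with their $BP$-theoretic counterparts through $\bar cl \bmod 3$, the reduction $Im(\bar cl)/3 \subseteq BPZ/3$ is a $PR$-submodule containing every generator listed above.

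Second I would extract surjectivity mod $3$ from Lemma 12.4, which asserts that $BPZ/(Tor,3)$ is generated as a $PR$-module by $x_4^2$, $x_8^2$, and products of Chern classes; each of these lies in $Im(\bar cl)/3$, so $Im(\bar cl) + Tor + 3\cdot BPZ = BPZ$. The $3$-torsion summand of $BPZ$ is identified in Theorem 12.1 with $D \otimes \bZ/3[x_{26}]^+$, and both $D$ and $x_{26}$ are algebraic, so $Tor \subseteq Im(\bar cl)$ and hence $Im(\bar cl) + 3\cdot BPZ = BPZ$. Each graded piece $BPZ^n$ is a finitely generated $\bZ_{(3)}$-module by the explicit description of Theorem 12.1, so a degree-wise Nakayama argument upgrades this to $Im(\bar cl) = BPZ$, yielding surjectivity of $\bar cl$ and the identification
\[ Im(\bar cl) = BPZ \cong D\otimes(\bZ_{(3)}\{1,3x_4\}\oplus E\oplus \bZ/3[x_{26}]^+). \]

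The isomorphism $Im(\bar cl) \cong Im(cl)$ then follows because the explicit model of $BPZ$ from Theorem 12.1 embeds under $\rho$ into $H^*(BG;\bZ_{(3)})$: every generator in the model is nonzero in topology (the torsion-free summands land in $H^*(BG)/Tor$ and $x_{26}$ is a non-vanishing $3$-torsion class), so $\rho$ is injective on $Im(\bar cl)$ and $Im(cl) = \rho(Im(\bar cl))$ inherits the stated additive description. The principal obstacle is the compatibility used in step one, namely that the $PR$-orbits of algebraic classes remain in $Im(\bar cl) \bmod 3$; this rests on the existence of motivic Steenrod operations on $CH^*(BG_k)/3$ (Voevodsky) and their compatibility with the corresponding operations on $BPZ/3$ via $\bar cl \bmod 3$. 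Everything else reduces to bookkeeping against the explicit decompositions in Theorem 12.1 and Lemma 12.4.
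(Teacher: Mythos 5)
The paper gives no in-text proof of this proposition (it is quoted from the author's earlier paper \cite{YaF4}), but your argument is exactly the intended one, assembled from the ingredients lined up in \S 12: Theorem 12.1 for the additive model of $BP^*(BG)\otimes_{BP^*}\bZ_{(3)}$, Lemma 12.2 for $x_{26}$, Lemma 12.5 for $x_4^2$, the hypothesis for $x_8^2$, Chern-class lifts of $3x_4, x_{36}, x_{48}$, the $RP$-module generation of Lemma 12.4, closure of the image of the cycle map under motivic reduced powers, and a degreewise Nakayama step, so the proposal is correct and takes essentially the same approach. Two wordings deserve slight care but do not change the structure: the reduced powers act on $Im(cl)/3$ inside $H^*(BG;\bZ/3)$ (they do not literally act on $BP^*(BG)\otimes_{BP^*}\bZ_{(3)}/3$, so Lemma 12.4 should be invoked through that identification), and injectivity of $\rho$ on the torsion summand $D\otimes\bZ/3[x_{26}]^+$ should be justified by Toda's description of $H^*(BF_4;\bZ/3)$ (linear independence of the classes $x_{36}^ax_{48}^bx_{26}^c$ together with the absence of higher $3$-torsion) rather than by nonvanishing of generators alone.
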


From Theorem 2.3, we have
\[ CH^*(\bG/B_k)/3\cong S(t)/(p_ip_j|1\le i,j\le 4).\]
Hence, we have 
$ (p_ip_j)\supset Ideal(i^*CH^*(BG_k))$ 
e.g. $i^*(x_4^2)=p_1^2$, $i^*(x_4x_8)=p_1p_2$,...

Suppose that $x_8^2\not \in CH^*(BG_k)$.  However
$x_8^2$ exists in $K^*(BG_k)\cong K^*(BG)$, because
it exists in $BP^*(BG)$.
Since 
$ CH^*(X)\to gr_{geo}^*(X)$ is surjective,  there is an
element 
\[ c\in CH^*(BG_k)\quad such\ that \ \
    c=v_1^sx_8^2\ for\ s\ge 1.\]
This $c$ is torsion element in $CH^*(BG)$ since
$3x_8^2\in Im(cl)$.
\begin{prop}
If $x_8^2\not \in Im(cl)$, then there is a non zero element $c\in Tor$ with $|c|=16-4s$ for $s=1$ or $2$.
\end{prop}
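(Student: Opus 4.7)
The plan is to refine the sketch immediately preceding the statement. There are three pieces: construct $c$ using Totaro's surjection onto the graded of the geometric filtration, verify $c$ is torsion via the modified cycle map, and pin down $s \in \{1,2\}$ by a Chow-in-degree-4 computation borrowed from Lemma 12.5.

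For the first piece, I would combine $x_8^2 \in BP^*(BG)$ (Theorem 12.1) with $K^*(BG_k) \cong K^*(BG)$ (the Chevalley-type statement already invoked in the proof of Lemma 12.5) to obtain $x_8^2 \in K^0(BG_k)$. By Totaro's surjectivity $CH^*(BG_k) \thrarr gr_{geo}^*(BG_k)$, choosing the smallest $s \geq 0$ for which there is a nonzero class in $gr_{geo}^{8-2s}$ representing $v_1^s x_8^2$ yields a nonzero $c \in CH^{8-2s}(BG_k)$ that lifts it. The hypothesis $x_8^2 \notin \Img(cl)$ rules out $s = 0$, so $s \geq 1$ and $|c| = 16 - 4s > 0$. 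For the torsion claim I would factor the cycle map as $cl = \rho \circ \bar{cl}$ (see (4.2)): $\bar{cl}(c)$ lifts $v_1^s x_8^2$ inside $BP^*(BG) \otimes_{BP^*} \bZ_{(3)}$, but every $v_i$ acts as zero in the codomain of $\rho$, so $cl(c) = \rho(\bar{cl}(c)) = 0$ and hence $c \in Grif \subseteq Tor$ by Corollary 12.3 (this is the same mechanism as the remark ``$3x_8^2 \in \Img(cl)$'' in the paragraph preceding the proposition).

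The last piece is ruling out $s \geq 3$. The case $s \geq 4$ forces $|c| \leq 0$, impossible for a nonzero $c$ in positive degree. For $s = 3$ we would have $c \in CH^2(BG_k)$, and Totaro's theorem (as applied in the proof of Lemma 12.5) gives $CH^2(BG_k) \cong (BP^*(BG) \otimes_{BP^*} \bZ_{(3)})^4$, which by Theorem 12.1 is precisely the torsion-free group $\bZ_{(3)}\{3x_4\}$. Since $c$ is torsion and nonzero, this is a contradiction, leaving $s \in \{1,2\}$ as desired.

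The main obstacle is verifying that the lift $c$ can actually be chosen nonzero, i.e., that $v_1^s x_8^2$ really represents a nonzero class in $gr_{geo}^{8-2s}$ for some $s \in \{1,2\}$. This amounts to determining the geometric-filtration level at which $x_8^2 \in K^0(BG_k)$ lives, and requires a careful comparison of the $BP$-AHSS (as computed in [Ko-Ya] and underlying Theorem 12.1) with the geometric filtration on algebraic $K$-theory; the nonvanishing of $v_1 x_8^2$ in $E$ (Theorem 12.1) should be what makes the $s=1$ or $s=2$ lift genuine. Granting this input, the argument above completes the proof.
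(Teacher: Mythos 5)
Your proposal is essentially the paper's own argument: the paragraph immediately preceding the proposition is the proof in the paper, and it consists of exactly your first two steps ($x_8^2\in BP^*(BG)$ gives a class in $K^*(BG_k)\cong K^*(BG)$, Totaro's surjection $CH^*\to gr_{geo}^*$ produces $c=v_1^sx_8^2$ with $s\ge 1$ because $x_8^2\notin \Img(cl)$), followed by the assertion that $c$ is torsion. Your exclusion of $s=3$ via $CH^2(BG)\cong (BP^*(BG)\otimes_{BP^*}\bZ_{(3)})^4=\bZ_{(3)}\{3x_4\}$ is precisely the mechanism the paper uses in the analogous Lemma 12.5 for $x_4^2$, so making it explicit here is in the spirit of the text. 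The one real divergence is the torsion step: you argue $cl(c)=\rho(\bar{cl}(c))=0$ because $\rho$ kills $v_1$-multiples, hence $c\in Grif\subset Tor$, whereas the paper argues torsionness from $3x_8^2\in \Img(cl)$ (so that $3c$ is already accounted for in filtration $8$); both versions rest on knowing the image of $c$ only up to filtration, so yours is at the same level of rigor as the paper's, and the Griffiths-element mechanism you invoke is exactly the phenomenon the paper illustrates with $\xi_3$ and $v_1w_8$ for $Spin(7)$. Finally, the ``main obstacle'' you defer -- that $v_1^sx_8^2$ is genuinely nonzero in $gr_{geo}^{8-2s}$ for some admissible $s$ -- is not resolved in the paper either: the text simply asserts the existence of such a $c$, implicitly using that the $K$-class of $x_8^2$ is nonzero (its summand in the [Ko-Ya] $E_\infty$ is $BP^*$-free) and that the geometric filtration is exhaustive and separated, with the hypothesis ruling out filtration $8$. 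So relative to the paper you have introduced no new gap; you have only flagged the step the paper itself takes for granted.
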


 We consider
the following ideals in $CH^*(BB_k)$
\[ Ker(j^*)=(3p_1, p_1^2, p_1\bar p_2, 3p_3,\bar p_2^2,....)\supset (3x_4,x_4^2, x_4x_8, x_4^3, \lambda x_8^2,..)=Ideal(Im(i^*)),\]
for $\lambda\in \bZ_{(3)}.$
We note that  
\[i^*(3x_4)=3p_1,\ \ i^*(x_4^2)=p_1^2,\ \ 
i^*(x_4^3)=3p_3,\ \ i^*(x_4x_8)=p_1p_2\quad \]
where we used $p^3_1=3p_3\ mod(p_1p_2)$.
Note that $\lambda\not =0$ implies $i^*(x_8^2)=p_2^2$.
\begin{prop}
The map $\tilde cl$ is surjective if and only if
$D^*(\bG)
=0$ for $*\le 16$.
\end{prop}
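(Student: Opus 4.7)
My plan is to reduce Proposition~12.7 to the single assertion that $x_8^2\in Im(cl)$, and then to check that this assertion is equivalent to the vanishing of $D^*(\bG)$ in degrees $*\le 16$ by a direct degree-by-degree comparison.

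For the first reduction, I would show that $\tilde{cl}$ is surjective if and only if $x_8^2\in Im(cl)$. Proposition~12.5 gives the ``if'' direction. For the converse, if $\tilde{cl}=\bar{cl}$ is surjective, then since $x_8^2$ lies in the target $BP^*(BG)\otimes_{BP^*}\bZ_{(3)}$ by Theorem~12.1, there exists $y\in CH^*(BG_k)$ with $\bar{cl}(y)=x_8^2$; composing with $\rho$ of (4.2) yields $cl(y)=\rho(x_8^2)=x_8^2$, so $x_8^2\in Im(cl)$.

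For the second equivalence, I would use the explicit lists of $Ker(j^*)$ and $Ideal(Im(i^*))$ displayed immediately before the proposition. The generators of $Ker(j^*)$ of total degree $\le 16$ are $3p_1$, $p_1^2$, $p_1\bar p_2$, $3p_3$, $\bar p_2^2$, $p_1p_3$, and $3p_4$. The Chern-class lifts $3x_4, x_4^2, x_4x_8, x_4^3$ (Lemmas~12.2 and 12.3, Proposition~12.4) together with $x_{20}, x_{36}, x_{48}$ (Corollary~11.4) contribute $3p_1$, $p_1^2$, $p_1p_2=p_1\bar p_2+p_1^3$, $p_1^3$, $\bar p_5=p_1p_4+\bar p_2 p_3$, $\bar p_9\equiv p_3^3$, and $\bar p_{12}\equiv p_4^3$ to $Im(i^*)$. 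Using the Toda relation $p_1^3=3p_3\pmod{p_1p_2}$ one obtains $3p_3$ in $Ideal(Im(i^*))$, and hence $p_1\bar p_2=p_1p_2-p_1^3$; the remaining classes $p_1p_3$ and $3p_4$ of degree $\le 16$ are handled similarly by combining $p_1\bar p_5$, $3\bar p_{12}$ with the relations of Theorem~11.3. The only borderline element is $\bar p_2^2$, and by Theorem~12.1 together with Proposition~12.4 the degree-$16$ part of $BP^*(BG)\otimes_{BP^*}\bZ_{(3)}$ modulo $(Tor,3,p_1)$ is spanned by the single class $x_8^2$ (since $x_4^4$ and $x_4^2x_8$ land in $(p_1^2)$ under $i^*$, and there is no lower-degree generator $x_{12}$ to combine with). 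Therefore $\bar p_2^2\in Ideal(Im(i^*))$ precisely when $\lambda\neq 0$, i.e.\ when $x_8^2\in Im(cl)$, which closes the equivalence.

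The main technical obstacle is the final ``only candidate'' claim at degree $16$: one must rule out that any Chern-class product or reduced-power composite of total degree $\le 16$ can contribute $\bar p_2^2$ modulo $(p_1)$ without $x_8^2$ itself lying in $Im(cl)$. This is a finite verification which reduces to inspecting the generators of $BP^*(BG)\otimes_{BP^*}\bZ_{(3)}$ from Theorem~12.1 and the reduced-power actions in Lemma~11.5, but it requires some care because of the mixing between $p_1$-divisible terms coming from Chern classes and the nontrivial relations such as $p_1^3\equiv 3p_3\pmod{p_1p_2}$.
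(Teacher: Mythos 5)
Your overall strategy is the one the paper clearly intends (it states this proposition without proof): reduce surjectivity of $\tilde cl$ to the single question $x_8^2\in Im(cl)$ via the quoted result from [Ya1] together with the easy converse through $\rho\circ\bar cl=cl$, and then compare $Ker(j^*)$ with $Ideal(Im(i^*))$ degree by degree up to $16$. The first reduction is correct, and your identification of $\bar p_2^2$ as the class whose coverage is equivalent to $x_8^2\in Im(cl)$ is also the right point.

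The gap is in the degree-by-degree matching, specifically at the kernel generator $p_1p_3=b_1b_3$ (topological degree $16$), which you list but do not actually handle. Your proposed repair --- ``combining $p_1\bar p_5$, $3\bar p_{12}$ with the relations of Theorem 11.3'' --- cannot work for degree reasons: $p_1\bar p_5$ and $\bar p_{12}$ sit in degrees $24$ and $48$ and are irrelevant to ideal membership in degree $16$ (likewise, $x_{20}$ itself is not available in $Im(cl)$: Theorem 12.1 only contains the products $x_4x_{20},x_8x_{20},x_{20}^2$, though this does not matter below degree $20$). Worse, no repair of this kind is possible inside your framework: $Im(i^*)$ lies in $CH^*(BT)^W$, and by Theorem 11.3 the mod $3$ invariants in degrees $<20$ are polynomials in $p_1,\bar p_2$ alone, while the degree $4$ image vanishes mod $3$ (only $3x_4$ is available). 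Hence in degree $16$ one has $Ideal(Im(i^*))\subset(p_1^2,\,p_1\bar p_2,\,\bar p_2^2)$, and if $p_1p_3$ lay in this ideal then reducing modulo $p_1$ and cancelling $p_1$ would give $p_3\in(p_1,\bar p_2)$, contradicting the fact that $p_1,p_2,p_3$ form a regular sequence in $S(t)/3$. So with $Ker(j^*_3)=(b_ib_j)$ and $b_3=p_3$ as in Section 11 and Theorem 2.3/2.4, the class $p_1p_3$ lies in $Ker(j^*)$ but never in $Ideal(Im(i^*))$, and your claimed equivalence ``$x_8^2\in Im(cl)\iff D^*(\bG)=0$ for $*\le16$'' does not close as stated: either this degree $16$ class must be shown not to enter the comparison (for instance by reinterpreting which deviation group $D^*(\bG)$ the proposition measures, or by arguing that $b_1b_3$ is not an independent kernel generator in this degree), or the only-if direction fails. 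This is the point your proof must confront explicitly; deferring it to $\bar p_5$ and $\bar p_{12}$ is not a proof.
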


\begin{prop} The ring $\tilde D(\bG)$ is  isomorphic to a quotient of
\[ D(F_4)'=\bZ/3\{p_1^{i_i}p_2^{i_2}p_3^{i_3}p_4^{i_4}|
2\le i_1+...+i_4\}/(
p_1^2,p_1p_2,p_3^3,p_4^3).\]
\end{prop}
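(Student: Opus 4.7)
The plan is to apply Corollary 3.5 to reduce to an ideal computation in $A(b) = \bZ/3[p_1, \bar p_2, p_3, p_4]$, and then to exhibit the four relations defining $D(F_4)'$ as elements of the image of $i^*$. Since $(F_4,3)$ is of type $(I)$ with rank $\ell = 4 = 2p-2$, Theorem 2.3 gives $CH^*(\bF)/3 \cong S(t)/(3, b_i b_j \mid 1 \le i,j \le 4)$ with $(b_1, b_2, b_3, b_4) = (p_1, \bar p_2, p_3, p_4)$, and Corollary 11.4 shows $Im(i^*) \subset A(b)$. Hence Corollary 3.5 applies, giving $D(\bG) \cong \tilde D(\bG) \otimes S(t)/(b)$ with $\tilde D(\bG) \cong I_K/I_D$, where by Lemma 3.2 the numerator $I_K = (p_1, \bar p_2, p_3, p_4)^2 \subset A(b)$ has additive basis $\{p_1^{i_1}\bar p_2^{i_2}p_3^{i_3}p_4^{i_4} : i_1 + \cdots + i_4 \ge 2\}$, matching the numerator of $D(F_4)'$, and $I_D$ is the ideal generated in $A(b)$ by $i^*$ of $CH^+(BG_k)/3$.

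It remains to show $(p_1^2, p_1 \bar p_2, p_3^3, p_4^3) \subset I_D$, which would yield the factorization $I_K \twoheadrightarrow D(F_4)' \twoheadrightarrow \tilde D(\bG)$. Using the description of $CH^+(BG_k)/3$ in Proposition 12.6 (under the hypothesis $x_8^2 \in Im(cl)$ of Proposition 12.5) together with the formulas for $i^*$ in Corollary 11.4, one has directly $i^*(x_4^2) = p_1^2$, $i^*(x_4 x_8) = p_1 \bar p_2$, and $i^*(x_8^2) = \bar p_2^2$; these three place the whole subideal $(p_1, \bar p_2)^2 \subset I_D$. For the remaining cubic relations, $i^*(x_{36}) = \bar p_9$ and $i^*(x_{48}) = \bar p_{12}$ lie in $I_D$, and by Theorem 11.3 one has $\bar p_9 \equiv p_3^3$ and $\bar p_{12} \equiv p_4^3$ modulo the ideal $I = (p_1, \bar p_2)$.

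The main obstacle is precisely this last step: one needs the corrections $\bar p_9 - p_3^3$ and $\bar p_{12} - p_4^3$ to lie in $I^2 = (p_1, \bar p_2)^2 \subset I_D$, not merely in $I$. By degree considerations, the terms of $\bar p_9 - p_3^3$ that escape $I^2$ can only be $\bZ/3$-multiples of $p_1 p_4^2$ and $\bar p_2 p_3 p_4$, with analogous residual monomials in $\bar p_{12} - p_4^3$; their coefficients must be shown to vanish mod~$3$. This can be handled either by a direct calculation with Toda's explicit description of $H^*(BT)^{W(F_4)}$ in Theorem 11.3, or by producing additional Chow-theoretic generators whose $i^*$-images absorb the residual monomials. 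Once the inclusion $(p_1^2, p_1 \bar p_2, p_3^3, p_4^3) \subset I_D$ is verified, the surjection $D(F_4)' \twoheadrightarrow \tilde D(\bG)$ follows formally, establishing $\tilde D(\bG)$ as a quotient of $D(F_4)'$.
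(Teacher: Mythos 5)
Your overall route coincides with the paper's implicit one: reduce via Corollary 3.5 (type $(I)$, $b=(p_1,\bar p_2,p_3,p_4)$) so that $\tilde D(\bG)$ is the span of $b$-monomials of degree at least $2$ modulo the ideal $I_D$ generated by $Im(i^*/3)$, and then exhibit $p_1^2,\ p_1\bar p_2,\ p_3^3,\ p_4^3$ inside $I_D$. But as written your argument has a genuine gap at exactly the step you yourself flag. Theorem 11.3 only gives $i^*(x_{36})=\bar p_9\equiv p_3^3$ and $i^*(x_{48})=\bar p_{12}\equiv p_4^3$ modulo $I=(p_1,\bar p_2)$, whereas what is needed is congruence modulo the part of $I$ that is actually available in $I_D$, namely $(p_1^2,p_1\bar p_2)$ (with $\bar p_2^2$ only conditionally available); your degree count correctly isolates the escaping monomials $p_1p_4^2$ and $\bar p_2p_3p_4$, but you merely assert that their coefficients ``can be handled,'' and that is precisely the content that has to be supplied. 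The paper does not refine Toda's formula; it obtains the cube relations directly from Chern classes: by Kameko's result there is a class $c_{18}$ restricting to $x_{36}$ on $F_4$, and the paper uses $p_3^3=i^*(c_{18})$ and $p_4^3=i^*(P^3c_{18})$ (Section 13). Without that input, or an explicit computation of $\bar p_9,\bar p_{12}$ modulo $(p_1^2,p_1\bar p_2)$, the surjection $D(F_4)'\to\tilde D(\bG)$ is not established.

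A secondary problem is your appeal to the supposition $x_8^2\in Im(cl)$ (the hypothesis of Proposition 12.6) to obtain $i^*(x_4x_8)=p_1\bar p_2$ and $(p_1,\bar p_2)^2\subset I_D$. That supposition is left open in the paper (cf. Proposition 12.7), and the statement you are proving is unconditional; indeed $p_2^2$ is deliberately absent from the relation list, the paper recording only $\lambda x_8^2$ with $\lambda$ undetermined. You do not need it: $p_1^2=i^*(x_4^2)$ lies in $Im(i^*/3)$ unconditionally by Lemma 12.5, the mod $3$ Chow image is closed under $P^1$, and by Lemma 11.5 together with Corollary 11.4 one has $P^1(p_1^2)=2p_1(-\bar p_2+p_1^2)$, so $p_1\bar p_2\in I_D$ because $p_1^3\in(p_1^2)$; this is exactly the argument the paper runs in the proof of Proposition 13.1. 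Rewriting your second step this way removes the conditional hypothesis, but the cube relations remain the missing piece of your proof.
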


\section{ $E_6$,$E_7$ for $p=3$}

The groups $E_6$, $E_7$ for $p=3$ are of type $(I)$.
Hence 
\[Kerj^+(\bG)\cong Ideal(b_ib_j, b_k|1\le i,j\le 4,\ 5\le k\le \ell)\subset S(t)/3.\]

By Kameko [Ka], there is a representation 
$\rho_{\ell}: E_{\ell}\to U(N)$ such that 
\[ i^*c_{18}(\rho_{\ell})=x_{36}\quad for\ i_{\ell}:F_4\to E_{\ell}.\]
Hence $i_{\ell}^*(P^3c_{18})=x_{48}$.  Thus 
\[p_3^3=i^*(c_{18}),\quad p_4^3=i^*(P^3c_{18}).\]

\begin{prop}  Let $G=E_{\ell}$ for $\ell=6$ or $7$.
Then there is a surjection
\[ ((\bZ/3\{1\}\otimes D(F_4)')\otimes
\bZ/3[b_5,...,b_{\ell}])^+ \to
 \tilde D(\bG).\]
\end{prop}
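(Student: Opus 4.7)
The plan is to exploit the type $(I)$ structure of $E_\ell$ at $p=3$ and to transfer the image relations already identified for $F_4$ in Section 12 via the natural inclusion $F_4\hookrightarrow E_\ell$.

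First, since $E_\ell$ is of type $(I)$ with $\ell\ge 5>2p-2=4$, Theorem 2.4 yields
\[Ker\,j^+(\bG)\cong S(t)(b_ib_j,\,b_k\mid 1\le i,j\le 4<k\le \ell).\]
Granting the hypotheses of Corollary 3.6 so that $D(\bG)\cong \tilde D(\bG)\otimes S(t)/(b)$, the module $\tilde D(\bG)$ is the $A(b)$-level quotient of $A(b)(b_ib_j,b_k)^+$ by $Im(i^+)\cap A(b)$.

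Second, I would decompose $A(b)=\bZ/3[b_1,\dots,b_4]\otimes\bZ/3[b_5,\dots,b_\ell]$ and partition the monomial generators of $A(b)(b_ib_j,b_k)^+$: either they lie purely in $\bZ/3[b_1,\dots,b_4]$ with total degree at least $2$ (contributed by the $b_ib_j$-generators), or they contain at least one factor $b_k$ with $k\ge 5$ (contributed by the $b_k$-generators). Reading the displayed target as
\[((\bZ/3\{1\}\oplus D(F_4)')\otimes\bZ/3[b_5,\dots,b_\ell])^+=\bZ/3[b_5,\dots,b_\ell]^+\oplus D(F_4)'\oplus D(F_4)'\otimes\bZ/3[b_5,\dots,b_\ell]^+,\]
these three summands match exactly the pure-$b_k$, pure-$F_4$, and mixed generator classes.

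Third, I would verify that the defining relations of $D(F_4)'$, namely $b_1^2,\,b_1b_2,\,b_3^3,\,b_4^3$, lie in $Im(i^*_{E_\ell})\cap A(b)$. The top relations $b_3^3,\,b_4^3$ follow from Kameko's classes: by hypothesis $i_\ell^* c_{18}(\rho_\ell)=x_{36}$ and hence $i_\ell^* P^3c_{18}(\rho_\ell)=x_{48}$, and by Corollary 11.4 the images of these classes in $S(t)$ equal $b_3^3$ respectively $b_4^3$ modulo the ideal $(b_1,b_2)$. The lower-degree relations $b_1^2,\,b_1b_2$ are handled by Chern classes of suitable polynomial $E_\ell$-representations whose pullbacks to $BF_4$ recover $x_4^2,\,x_4x_8\in CH^*(BF_4)$; such classes exist on $BE_\ell$ directly through representations that restrict to the standard $Spin(9)$-representation on $F_4$.

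Finally, the surjection is defined on monomial generators by $(p\otimes q)\mapsto pq$, extended $\bZ/3$-linearly. The second step guarantees every generator of $\tilde D(\bG)$ lies in the image, and the third step ensures that the relations defining $D(F_4)'$ are respected in the target, so the map is well defined and surjective. The main obstacle is the third step for the lower-degree relations $b_1^2,\,b_1b_2$: Kameko's theorem explicitly supplies $b_3^3$ and $b_4^3$ via $c_{18}(\rho_\ell),\,P^3c_{18}(\rho_\ell)$, but identifying Chern classes on $BE_\ell$ producing $b_1^2,\,b_1b_2$ demands a separate representation-theoretic construction. Any further relations contributed by Chern classes involving $b_5,\dots,b_\ell$ merely shrink $\tilde D(\bG)$, which is harmless for a surjection statement.
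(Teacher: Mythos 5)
Your setup and the two top relations are handled exactly as in the paper: type $(I)$ gives $Ker\, j^+(\bG)=Ideal(b_ib_j,b_k\mid 1\le i,j\le 4<k\le \ell)$, and $b_3^3,\,b_4^3$ are killed by Kameko's classes $c_{18}(\rho_\ell)$ and $P^3c_{18}(\rho_\ell)$, which is precisely what the paper records just before the proposition. The genuine gap is the step you yourself flag as the main obstacle: you never establish $b_1^2,\,b_1b_2\in Im(i^*)$ for $E_6,E_7$, you only assert that there are $E_\ell$-representations whose Chern classes pull back to $x_4^2$ and $x_4x_8$. That assertion is not available: even for $F_4$ itself the paper does not produce $x_4^2$ as a Chern class --- Lemma 12.4 singles out $x_4^2$ (and $x_8^2$) precisely as the generators not known to come from Chern classes, and $x_4^2\in Im(cl)$ is proved indirectly in Lemma 12.5, not by exhibiting a representation. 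So your third step would require genuinely new representation-theoretic input, and without it the relations defining $D(F_4)'$ are not known to vanish in $\tilde D(\bG)$, i.e.\ the map is not known to be well defined, let alone surjective. (A smaller imprecision, which the paper's terse statement shares, is that monomials with exactly one factor from $b_1,\dots,b_4$ times a positive monomial in $b_5,\dots,b_\ell$ lie in $Ker\, j^+$ but in none of your three summands.)

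The paper closes exactly this hole by a different, representation-free mechanism: it reruns the proof of Lemma 12.5 for $G=E_\ell$ --- the class $x_4^2$ exists in $K^0(BG_k)\cong K^0(BG)$ because it exists in $BP^*(BG)$, the surjection $CH^*\to gr^*_{geo}$ produces a Chow class equal to $v_1^sx_4^2$, and Totaro's identification $CH^2(BG)\cong (BP^*(BG)\otimes_{BP^*}\bZ_{(p)})^4$ rules out a lift only of the form $v_1x_4^2$ --- hence $p_1^2\in Im(i^*)$ also for $E_6,E_7$. Then, since $i^*$ mod $3$ commutes with the reduced powers and $P^1(p_1^2)=p_1\bar p_2$ modulo the ideal already generated by $p_1^2$, one gets $p_1\bar p_2\in Ideal(Im(i^+))$ with no new representation at all. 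Replacing your Chern-class construction by this K-theoretic lifting plus the $P^1$ trick recovers the intended proof; as written, your argument is incomplete at exactly the point where the paper's proof does its only real work.
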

\begin{proof}
From  the proof of Lemma 12.5, we see $p_1^2\in Im(i^*)$.
Since  $P^1(p_1^2)=p_1\bar p_2$, we see $p_1p_2\in Im(i^*)$ also for $E_6,E_7$.
\end{proof}

\end{document}